\documentclass[12pt]{amsart}
\usepackage{amsfonts,amssymb,amsmath,amsthm,amstext}

\usepackage[colorlinks, linkcolor=blue, citecolor=blue, urlcolor=blue,pagebackref,
hypertexnames=false]{hyperref}
\usepackage{geometry}
\usepackage{mathrsfs}
\usepackage{txfonts}
\allowdisplaybreaks
\numberwithin{equation}{section}
\newtheorem{theorem}{Theorem}[section]
\newtheorem{proposition}[theorem]{Proposition}
\newtheorem{lemma}[theorem]{Lemma}
\newtheorem{corollary}[theorem]{Corollary}
\theoremstyle{remark}
\newtheorem{remark}[theorem]{Remark}

\newcommand{\bbR}{\mathbb R}
\newcommand{\bbZ}{\mathbb Z}
\newcommand{\calF}{\mathcal F_\kappa}
\newcommand{\calK}{\mathcal K}
\newcommand{\calS}{\mathcal S}
\newcommand{\calW}{\mathcal W}
\newcommand{\abs}[1]{\left|#1\right|}
\newcommand{\norm}[1]{\left\|#1\right\|}
\newcommand{\ip}[2]{\left\langle #1,#2\right\rangle}
\newcommand{\supp}{\operatorname{supp}}
\newcommand{\BMO}{\mathrm{BMO}}
\newcommand{\ch}{\mathrm{ch}}

\title[Non-radial Dunkl multipliers]
{Non-radial Dunkl multipliers at the $L^2$-Sobolev threshold}

\author{Der-Chen Chang}
\address{Der-Chen Chang, Department of Mathematics and Statistics, Georgetown University, Washington, DC 20057, USA; \,
Graduate Institute of Business Administration, College of Management, Fu Jen Catholic University, New Taipei City 242,
Taiwan, ROC}
\email{chang@georgetown.edu}

\author{Ji Li}
\address{Ji Li, School of Mathematical and Physical Sciences, Macquarie University, NSW 2109, Australia}
\email{ji.li@mq.edu.au}

\author{Chaojie Wen}
\address{Chaojie Wen, Department of Mathematics, Sun Yat-sen University, Guangzhou, 510275, P.R.~China}
\email{wenchj@mail2.sysu.edu.cn}

\author{Liangchuan Wu}
\address{Liangchuan Wu, School of Mathematical Sciences, Anhui University, Hefei, 230601, P.R.~China}
\email{wuliangchuan@ahu.edu.cn}

\subjclass[2020]{Primary 42B15; Secondary 42B30, 33C52}
\keywords{Dunkl transform, H\"ormander multiplier theorem, non-radial multipliers,
finite reflection groups, spectral walls}

\hypersetup{
 pdftitle={Non-radial Dunkl multipliers at the L2-Sobolev threshold},
 pdfauthor={Der-Chen Chang, Ji Li, Chaojie Wen, and Liangchuan Wu},
 pdfsubject={Dunkl multiplier theorems without spectral wall separation},
 pdfkeywords={Dunkl transform, Hormander multiplier theorem, non-radial multipliers,
 finite reflection groups, spectral walls}
}

\begin{document}

\begin{abstract}
We prove a H\"ormander multiplier theorem for the Dunkl transform associated with an arbitrary
finite reflection group.  Uniform $H^\sigma(\bbR^N)$ bounds for the normalised dyadic pieces of a
measurable symbol, with $\sigma>\mathbf N/2$ and $\mathbf N$ the homogeneous dimension, imply
$L^p(d\omega)$ boundedness for $1<p<\infty$ and weak type $(1,1)$.  No radiality or reflection-group
invariance is assumed, and the dyadic pieces may meet the reflecting hyperplanes.  The main new estimate
controls the Dunkl kernel $E_\kappa(i\xi,x)$ and its first Euclidean derivatives in $x$, uniformly near
arbitrary intersections of reflecting hyperplanes.  Here $\xi$ lies in a fixed compact annulus.
Writing $w_\kappa$ for the Dunkl weight and
$\Omega_\kappa$ for its inhomogeneous counterpart, it gives the bound
$C\Omega_\kappa(x)^{-1/2}w_\kappa(\xi)^{-1/2}$.  The proof decomposes each root contribution into
Hermitian two-dimensional blocks, applies oscillatory changes of variables near the rootwise transition
points, and uses a commutator identity to cancel the non-integrable phase derivative.  Chamber lifting also yields endpoint
bounds for the pullbacks of the componentwise Coifman--Weiss atomic $H^1$ space and the corresponding
BMO space on a fixed chamber.
\end{abstract}

\maketitle

\section{Introduction}

H\"ormander's Fourier multiplier theorem gives $L^p$ boundedness,
$1<p<\infty$, under a uniform $L^2$-Sobolev condition of order greater
than one half of the Euclidean dimension \cite{Hormander1960}.
For the Dunkl transform, the corresponding kernel calculation involves
ordinary derivatives and reflection differences.
We combine estimates for these terms with a finite matrix
representation on a chamber to prove multiplier bounds for general
scalar symbols and functions.

Let $R\subset\bbR^N$ be a reduced finite root system, let $G$ be the
finite reflection group generated by the reflections $\sigma_\alpha$,
and let $\kappa:R\to[0,\infty)$ be $G$-invariant.
Fix a positive subsystem $R_+$.
Multiplying all roots in each $G$-orbit by the same positive constant
leaves the reflections and the Dunkl operators unchanged.
We normalize the roots so that $\norm{\alpha}^2=2$ for $\alpha\in R$.
This changes the weights below by fixed positive constants.
Set
$$
 w_\kappa(x)
 =
 \prod_{\alpha\in R_+}\abs{\ip{\alpha}{x}}^{2\kappa(\alpha)},
 \qquad
 d\omega(x)=w_\kappa(x)\,dx,
$$
and let
$\mathbf N=N+2\sum_{\alpha\in R_+}\kappa(\alpha)$
be the homogeneous dimension.

Let $E_\kappa$ denote the Dunkl kernel.
With the conventions of \cite{Dunkl1989,deJeu1993,DunklXu2014}, define
$$
 \calF f(\xi)
 =
 (c_\kappa^{\mathrm M})^{-1}
 \int_{\bbR^N}f(x)E_\kappa(-i\xi,x)\,d\omega(x),
 \qquad
 c_\kappa^{\mathrm M}
 =
 \int_{\bbR^N}e^{-\norm{x}^2/2}\,d\omega(x).
$$
The superscript $\mathrm M$ denotes the Macdonald--Mehta normalization.
The transform is unitary on $L^2(d\omega)$, and its inverse is given
by the same integral with $E_\kappa(i\xi,x)$.
The orbit distance
$$
 d(x,y)=\min_{g\in G}\norm{x-gy}
$$
is $G$-invariant in either variable.
The coordinate Dunkl operators are
$$
 T_jf(x)
 =
 \partial_jf(x)
 +
 \sum_{\alpha\in R_+}
 \kappa(\alpha)\alpha_j
 \frac{f(x)-f(\sigma_\alpha x)}{\ip{\alpha}{x}}.
$$
They satisfy
$$
 T_j^\xi E_\kappa(z,\xi)=z_jE_\kappa(z,\xi),
 \qquad
 x_j\calF^{-1}q(x)=\calF^{-1}(iT_jq)(x),
$$
where the second identity holds for $q\in\calS(\bbR^N)$.
For a bounded measurable symbol $m$, define the Dunkl multiplier
$T_m$ on $L^2(d\omega)$ by
$$
 \calF(T_mf)=m\calF f.
$$

Let
$\calW=\bigcup_{\alpha\in R}\alpha^\perp$.
We call these reflecting hyperplanes spectral walls when they occur
in frequency space.
A function $\varphi\in C_c^\infty((0,\infty))$ is an admissible cutoff
if
$\sum_{j\in\bbZ}\varphi(2^{-j}r)=1$ for $r>0$.
For such a cutoff and $\sigma\ge0$, set
$$
 \calS_\sigma^\varphi(m)
 =
 \sup_{j\in\bbZ}
 \norm{\varphi(\norm{\cdot})m(2^j\cdot)}_{H^\sigma(\bbR^N)}.
$$
Fix a non-negative admissible cutoff
$\psi\in C_c^\infty((1/2,2))$ and write
$\calS_\sigma(m)=\calS_\sigma^\psi(m)$.
Lemma~\ref{lem:dyadic} shows that the resulting Sobolev condition
is independent of the choice of admissible cutoff.
For $\sigma>N/2$, it also gives
$\norm{m}_{L^\infty}\le C\calS_\sigma(m)$.
The dyadic pieces cover all nonzero frequencies, and $T_m$ is
determined by the almost everywhere values of $m$.

Dziuba\'nski and Hejna proved the dyadic $H^\sigma$ multiplier theorem
for arbitrary root systems when $\sigma>\mathbf N$.
They obtained the threshold $\sigma>\mathbf N/2$ for radial
multipliers, and for general multipliers when the Dunkl translations
are uniformly bounded on $L^1$
\cite[Theorems~1.2 and~8.1, Remarks~8.3--8.4]{DH2019}.
Bui obtained $L^p$ boundedness, $1<p<\infty$, at the same threshold
for arbitrary symbols under the stronger dyadic $W_\infty^s$
condition \cite[Theorem~1.3]{Bui2025}.
Related results for non-radial translated kernels, radial multipliers,
product systems, and intrinsic Dunkl Hardy spaces appear in
\cite{DH2023,MT2025,Wrobel2015,DHL2026}.
See also \cite{TX2005} for the underlying convolution theory.
For weighted and variable-exponent multiplier results, see
Li and Zhao \cite{LiZhao2026}.

In \cite[Theorems~1.1 and~1.3]{Chamber2026}, we established a
finite matrix Calder\'on--Zygmund reduction on a chamber for
arbitrary finite reflection groups.
For the group generated by reflections across the
coordinate hyperplanes, we verified the scalar Sobolev kernel
estimates for Walsh components supported in fixed frequency cones
separated from the coordinate hyperplanes.

The aim of our paper is to provide a full multiplier theorem in the Dunkl setting.
We prove these scalar estimates for every finite
reflection group at $\sigma>\mathbf N/2$.
The supports of the dyadic symbols may meet the reflecting hyperplanes.
Applying the estimates to finite sums of localized multipliers
also gives weak type $(1,1)$ and the chamber Hardy and BMO bounds.
\begin{theorem}\label{thm:main}
Let $m$ be a measurable function on $\bbR^N$ satisfying
$\calS_\sigma(m)<\infty$ for some $\sigma>\mathbf N/2$.
Then $m$ is essentially bounded, and its Dunkl multiplier $T_m$
is bounded on $L^p(d\omega)$ for $1<p<\infty$, is of weak type
$(1,1)$, and satisfies
$$
 T_m:H^1_{\ch}\longrightarrow L^1(d\omega),
 \qquad
 T_m:L^\infty(d\omega)\longrightarrow\BMO_{\ch}.
$$
Quantitatively,
\begin{align*}
 \norm{T_m}_{L^p\to L^p}
 &\le C_p\calS_\sigma(m),
 \qquad 1<p<\infty.
\end{align*}
Moreover,
$$
 \norm{T_m}_{L^1\to L^{1,\infty}}
 +\norm{T_m}_{H^1_{\ch}\to L^1}
 +\norm{T_m}_{L^\infty\to\BMO_{\ch}}
 \le C\calS_\sigma(m).
$$
The constants depend only on $R$, $\kappa$, $\sigma$, $\psi$,
and $p$ where applicable.
The symbol and the functions may be nonradial and non-$G$-invariant.
The dyadic pieces of $m$ may meet the reflecting hyperplanes.
\end{theorem}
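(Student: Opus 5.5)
The plan is to reduce Theorem~\ref{thm:main} to scalar Sobolev kernel estimates and feed them into the finite matrix Calder\'on--Zygmund reduction on a chamber from \cite[Theorems~1.1 and~1.3]{Chamber2026}. That reduction yields $L^p$, weak $(1,1)$, $H^1_{\ch}\to L^1$ and $L^\infty\to\BMO_{\ch}$ bounds. Its hypothesis is that the dyadic kernels, lifted to the chamber, satisfy H\"ormander-type integral conditions with constants controlled by $\calS_\sigma(m)$.

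\emph{Reduction to single dyadic pieces.} By Lemma~\ref{lem:dyadic}, $\norm{m}_{L^\infty}\le C\calS_\sigma(m)$, so $T_m$ is bounded on $L^2(d\omega)$ with this constant. Write $m=\sum_j m_j$ with $m_j=\psi(2^{-j}\norm{\cdot})m$, and let $K_j(x,y)$ be the kernel of $T_{m_j}$. By the Dunkl dilation structure, $K_j(x,y)=2^{j\mathbf N}K^{(j)}(2^jx,2^jy)$, where
$$K^{(j)}(x,y)=c\int q(\xi)E_\kappa(i\xi,x)E_\kappa(-i\xi,y)\,w_\kappa(\xi)\,d\xi,\qquad q=\psi(\norm{\cdot})m(2^j\cdot)\in H^\sigma$$
is supported in a fixed annulus. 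It therefore suffices to prove, uniformly in $j$, two estimates:
\begin{align*}
\int_{d(x,y)>2r}\abs{K^{(0)}(x,y)-K^{(0)}(x,y')}\,d\omega(x)&\le C\norm{q}_{H^\sigma}\min(1,r)^{\epsilon},\\
\int_{d(x,y)>r}\abs{K^{(0)}(x,y)}\,d\omega(x)&\le C\norm{q}_{H^\sigma}r^{-\epsilon},
\end{align*}
for $\norm{y-y'}<r$. Summing these over $j$ in the usual way then gives the H\"ormander condition. The chamber version needs these bounds for the components obtained by restricting $x,y$ to chambers, which is exactly what the chamber lifting in \cite{Chamber2026} consumes.

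\emph{Weighted $L^2$ bound.} To obtain the integral bounds, I use Cauchy--Schwarz against a weight $(1+d(x,y))^{2s}$ with $\mathbf N/2<s<\sigma$. The factor $(1+d(x,y))^{-2s}$ is integrable in $d\omega(x)$ at scale $\ge1$, using volume growth $\omega(B(y,r))\sim r^N\Omega$-type bounds. So the task is a weighted $L^2(d\omega)$ estimate
$$\int\abs{K^{(0)}(x,y)}^2(1+d(x,y))^{2s}\,d\omega(x)\le C\norm{q}_{H^\sigma}^2\,\omega(B(y,1))^{-1},$$
together with the same estimate for $\nabla_y K^{(0)}$. Multiplication by $x_j$ corresponds to $iT_j$ on the frequency side. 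Iterating $T_j$ on $q(\xi)E_\kappa(-i\xi,y)$ gives ordinary derivatives of $q$, which are controlled by $H^\sigma$ after complex interpolation for fractional $s$. It also gives reflection differences $(q(\xi)-q(\sigma_\alpha\xi))/\ip{\alpha}{\xi}$, which are handled by Hardy-type inequalities near walls. Plancherel converts the weighted bound back.

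\emph{Main obstacle.} The obstacle is the pointwise control of $E_\kappa(i\xi,x)$ and $\nabla_xE_\kappa(i\xi,x)$ for $\xi$ in the annulus, uniformly near intersections of walls in both variables. The needed bound is $C\,\Omega_\kappa(x)^{-1/2}w_\kappa(\xi)^{-1/2}$. Without it, the reflection-difference terms and the $y$-gradient are not square-integrable against $w_\kappa(\xi)\,d\xi$ when the dyadic support meets $\calW$. I would prove it root by root. Decompose $E_\kappa$ via the Hermitian two-dimensional blocks $\{f,f\circ\sigma_\alpha\}$. Near the transition points $\ip{\alpha}{x}\ip{\alpha}{\xi}\sim1$, make an oscillatory change of variables. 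Then use a commutator identity between $T_j$ and the reflection to cancel the non-integrable phase derivative $\kappa(\alpha)\alpha/\ip{\alpha}{x}$. With this kernel bound in hand, the remaining steps are routine Calder\'on--Zygmund machinery.
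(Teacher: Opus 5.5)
Apart from the kernel estimate, your plan follows the paper's route:
\begin{itemize}
\item $\norm{m}_\infty\le C\calS_\sigma(m)$ from Lemma~\ref{lem:dyadic};
\item rescaled dyadic amplitudes on a fixed annulus;
\item weighted $L^2$ bounds with weight $(1+d(\cdot,y))^{2s}$, obtained from $x_j\leftrightarrow iT_j$ at the cost of one Sobolev derivative per moment, both for $\partial_j$ and for $\delta_\alpha$ (Lemma~\ref{lem:delta} is your Hardy inequality);
\item Plancherel, then chamber lifting and a Calder\'on--Zygmund argument.
\end{itemize}

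Two details are stated loosely but are fixable. First, the moment identity must be written for $x_j-(gy)_j$. Each reflection term moves the centre to $\sigma_\alpha gy$, so you must estimate the whole family $\calK_h[\mathcal Da]$, $h\in G$, at once; this is why the orbit distance is the right one. The $y$-gradient version also picks up an extra term $(gv)_j\calK_g[a]$. Second, the pointwise bound is not needed because reflection differences fail to be square integrable against $w_\kappa\,d\xi$. The amplitudes produced by the moment identity lie in $L^2(d\xi)$ by Lemma~\ref{lem:delta}, and $w_\kappa$ is bounded on the annulus. What the bound actually does is convert $\norm{u(\xi)E_\kappa(-i\xi,y)}_{L^2(d\omega)}$ into $C\Omega_\kappa(y)^{-1/2}\norm{u}_{L^2(d\xi)}$. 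The heat-kernel average (Proposition~\ref{prop:raw-average}) gives this only against $\norm{u}_{L^\infty}$, which costs $N/2$ further derivatives.

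The genuine gap is the kernel estimate itself (Proposition~\ref{thm:joint}). It is the heart of the theorem, and your proposal names it but does not prove it: the last paragraph lists ingredients without a mechanism that makes them work. Arguing ``root by root'' is exactly what fails for a general root system. The kernel does not factor over roots. As the spectral parameter $\lambda$ approaches an intersection of walls, several frequencies $\varepsilon_\beta=\abs{\ip{\beta}{\lambda}}$ degenerate at different rates, and the corresponding coupling terms do not commute.

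The paper's argument needs an actual evolution equation with the following structure.
\begin{itemize}
\item Along rays $x_r=x_0+rH_i$ in sectors of a chamber, consider the vector $\bigl(W(x_r)e^{-i\ip{x_r}{g\lambda}}E_\kappa(ig\lambda,x_r)\bigr)_{g\in G}$. All $\abs{G}$ kernels must be taken together, since reflections couple $g$ and $\sigma_\alpha g$. This vector satisfies $\Phi'=\sum_\beta\mathsf A_\beta\Phi$.
\item The weight $W$ and the phase cancel all diagonal terms exactly. Each $\mathsf A_\beta$ is a direct sum of Hermitian $2\times2$ blocks with entries of modulus $\kappa(\beta)/(r+d)$, oscillating at frequency comparable to $\varepsilon_\beta$.
\item Proposition~\ref{prop:simultaneous} cuts the ray at all transition points $\varepsilon_\beta(a+r)=K$ simultaneously. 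On each piece, the fast roots are removed by $V=(\mathrm{Id}+\mathsf Q_{\mathscr F})^{-1}Y$ with $\mathsf Q_{\mathscr F}=-\int_r^\infty\mathsf A_{\mathscr F}$. This leaves the integrable remainder $(\mathrm{Id}+\mathsf Q_{\mathscr F})^{-1}\bigl(\mathsf A_{\mathscr S}\mathsf Q_{\mathscr F}-\mathsf Q_{\mathscr F}\mathsf A_{\mathscr S}+\mathsf A_{\mathscr F}\mathsf Q_{\mathscr F}\bigr)$. The commutator that matters is this one, between the slow part and the integrated fast part, not one between $T_j$ and a reflection.
\item The slow roots are controlled by the Hermitian energy estimate. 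The cumulative growth $\prod_\beta\bigl(q_{\varepsilon_\beta}(u)/q_{\varepsilon_\beta}(1)\bigr)^{\kappa(\beta)}$, $u\ge1$, is at most $C\,w_\kappa(\lambda)^{-1/2}$, which is why the power $1/2$ of $w_\kappa$ appears.
\item The factor $\Omega_\kappa(x)^{1/2}$ comes from a weight comparison and a downward induction over $\mathfrak a^N\subset\cdots\subset\mathfrak a^1$ starting from a compact set. This induction controls the roots frozen along a ray.
\item The gradient bound comes from differentiating the system. The new source term is $O(\norm{v}(a+r)^{-2})$ uniformly in $\lambda$.
\end{itemize}
None of this structure appears in your sketch, so the proposal stops exactly where the main work of the theorem begins.
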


The proof begins with the rescaled dyadic symbols
$$
 b_j(\xi)=\psi(\norm{\xi})m(2^j\xi),
 \qquad
 \sup_{j\in\bbZ}\norm{b_j}_{H^\sigma}=\calS_\sigma(m).
$$
We first prove the kernel estimates for smooth amplitudes $a$
supported in a fixed annulus containing the supports of all $b_j$.
The estimates are then extended to $b_j$ by interpolation and
approximation.
For such an amplitude, the Dunkl product rule gives ordinary
derivatives of $a$ and reflection terms involving
$$
 \delta_\alpha a(\xi)
 =
 \frac{a(\xi)-a(\sigma_\alpha\xi)}{\ip{\alpha}{\xi}},
 \qquad \ip{\alpha}{\xi}\ne0.
$$
Lemma~\ref{lem:delta} shows that $\delta_\alpha a$ extends smoothly across $\alpha^\perp$ and satisfies the global estimate
$$
 \norm{\delta_\alpha a}_{H^s}
 \le C_{\alpha,s}\norm{a}_{H^{s+1}},
 \qquad s\ge0.
$$
Thus each reflection term is controlled by one Sobolev derivative
of $a$, as is an ordinary first derivative.

We combine these estimates with uniform bounds for the Dunkl
kernel and its first Euclidean spatial derivatives.
R\"osler and de Jeu \cite{RdJ2002} and Langen \cite{Langen2026}
obtained spatial estimates for fixed regular spectral parameters.
The dihedral estimates of Anker and Trojan \cite{AT2025} give
related bounds with explicit factors involving the root pairings.
For estimates on conical subsets, see also \cite{AG2021}.

One fundamental observation is the following Dunkl kernel estimate.
\begin{proposition}\label{thm:joint}
For every compact annulus $\mathcal A\subset\bbR^N\setminus\{0\}$
and every multi-index $\nu$ with $\abs{\nu}\le1$, there exists
$C_{\mathcal A,\nu}<\infty$ such that
\begin{equation}\label{eq:joint}
 \sup_{\xi\in \mathcal A\setminus\calW,\,y\in\bbR^N}
 w_\kappa(\xi)^{1/2}\Omega_\kappa(y)^{1/2}
 \abs{\partial_y^\nu E_\kappa(i\xi,y)}
 \le C_{\mathcal A,\nu},
\end{equation}
where $\Omega_\kappa(y)
 =
 \prod_{\alpha\in R_+}
 \bigl(1+\abs{\ip{\alpha}{y}}\bigr)^{2\kappa(\alpha)}$.
The estimate is uniform as the regular spectral parameter approaches
any intersection of reflecting hyperplanes.
\end{proposition}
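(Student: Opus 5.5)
We propose to prove \eqref{eq:joint} by reducing to the regime where both $y$ and $\xi$ are large relative to walls. The mechanism is a representation of $E_\kappa(i\xi,y)$ via the Dunkl intertwiner combined with a rootwise decomposition. The trivial bound $\abs{E_\kappa(i\xi,y)}\le1$ already handles $\norm{y}\le C$: there $\Omega_\kappa(y)\le C$ and $w_\kappa(\xi)\le C$ on $\mathcal A$. For derivatives we use $T_j^yE_\kappa(i\xi,y)=i\xi_jE_\kappa(i\xi,y)$. Together with R\"osler's positive integral representation $E_\kappa(i\xi,y)=\int e^{i\ip{\xi}{\eta}}\,d\mu_y(\eta)$ with $\supp\mu_y\subset\mathrm{co}(Gy)$, this gives $\abs{\partial_y^\nu E_\kappa(i\xi,y)}\le C\norm{\xi}$ uniformly. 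So only large $\norm{y}$ matters. By $G$-equivariance, $E_\kappa(i g\xi,gy)=E_\kappa(i\xi,y)$, and the weights are $G$-invariant. We may therefore assume that $\xi$ lies in a fixed closed chamber, and $y$ ranges freely.

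The core step is an asymptotic/oscillatory estimate. First consider $y$ at distance $\gtrsim1$ from every wall, say $\abs{\ip{\alpha}{y}}\ge1$ for all $\alpha$, with $\xi$ regular. We use the known sharp bound of R\"osler--de Jeu type $\abs{E_\kappa(i\xi,y)}\le C\,(w_\kappa(\xi)w_\kappa(y))^{-1/2}$. Its uniformity as $\xi$ approaches walls is exactly what must be established. We localize near an intersection of walls using the parabolic subsystem $R_\xi=\{\alpha:\abs{\ip{\alpha}{\xi}}\le\varepsilon\}$. Then we factor the kernel, up to controlled errors, as a kernel for the parabolic subgroup $G_\xi$ times a regular oscillatory part in the directions transverse to $R_\xi$. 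For roots outside $R_\xi$, $\ip{\alpha}{\xi}$ is bounded below and the Langen/R\"osler--de Jeu estimates apply, giving $\prod(1+\abs{\ip{\alpha}{y}})^{-\kappa(\alpha)}$. For roots in $R_\xi$ we need a bound of the form $\prod_{\alpha\in R_\xi}\bigl(\abs{\ip{\alpha}{\xi}}(1+\abs{\ip{\alpha}{y}})\bigr)^{-\kappa(\alpha)}$. The natural tool is the rootwise transition at $\abs{\ip{\alpha}{\xi}\ip{\alpha}{y}}\sim1$. When this product is $\le1$, the trivial bound together with $\Omega_\kappa(y)^{1/2}w_\kappa(\xi)^{1/2}$ factor-wise $\lesssim1$ suffices. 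When it is $\ge1$, oscillation must be extracted.

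To extract the decay, we decompose each root contribution into Hermitian two-dimensional blocks, as the abstract indicates. Acting on the vector $(E_\kappa(i\xi,gy))_{g\in G}$, the Dunkl system $T^y_jE=i\xi_jE$ becomes a first-order ODE system along rays $t\mapsto ty$. This system is $\frac{d}{dt}\mathbf E=\bigl(iA(\xi,y)-\frac1t\sum_\alpha\kappa(\alpha)(I-P_\alpha)\bigr)\mathbf E$. Here $A$ is Hermitian, and the reflection parts pair $g$ with $\sigma_\alpha g$ into $2\times2$ blocks. In each block we diagonalize the phase, which has eigenvalues $\pm\ip{\alpha}{\xi}\ip{\alpha}{y}$. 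We then perform an oscillatory change of variables $s=\abs{\ip{\alpha}{\xi}}t$ after the transition point. The obstacle is that the off-diagonal coupling, after conjugating by the diagonalizer, produces a term $\frac{d}{dt}$(phase) of size $1/t$. This term is not integrable and would spoil uniformity. We would cancel it with a commutator identity $[A,P_\alpha]$, which shows that the dangerous term is a total derivative up to integrable remainders. A Gr\"onwall estimate for the energy $\sum_g w\,\abs{E(gy)}^2$ then gives decay $t^{-\sum\kappa}$ with constants independent of $\abs{\ip{\alpha}{\xi}}$. We expect this step to be the main difficulty.

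Finally, near walls in $y$, with some $\abs{\ip{\alpha}{y}}<1$, the corresponding factor of $\Omega_\kappa$ is $\sim1$. That root is therefore treated like the trivial case, and the ODE argument is run with the remaining roots only. For $\abs{\nu}=1$ we write $\partial_j=T_j-\sum\kappa(\alpha)\alpha_j\delta_\alpha$. The term $T_j$ gives $i\xi_jE$, already bounded. The difference terms are bounded by $\abs{E(y)}+\abs{E(\sigma_\alpha y)}$ divided by $\abs{\ip{\alpha}{y}}$ when $\abs{\ip{\alpha}{y}}\ge1$. Otherwise we use the mean value theorem along the segment to $\sigma_\alpha y$, on which $\Omega_\kappa$ is comparable, reducing to the $\nu=0$ bound combined with the gradient bound above.
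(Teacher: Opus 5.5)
Your outline names the right ingredients, but the two steps that carry the proof have no mechanism, and with radial rays the constants are not uniform in $y$. Along the radial ray $t\mapsto t\hat y$, $\hat y=y/\norm{y}$, the coupling between the components $g$ and $\sigma_\alpha g$ is $\kappa(\alpha)/t$ times a phase of frequency $\abs{\ip{\beta}{\xi}}\,\abs{\ip{\alpha}{\hat y}}$, for a pair of roots $\alpha,\beta$. The transition times and the integration-by-parts bounds therefore degenerate when $\hat y$ approaches a wall, and not only when $\xi$ does.

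Your remedy, to ``run the ODE with the remaining roots only,'' is not available. A root with small nonzero $\ip{\alpha}{y}$ still enters the radial equation with the full coefficient $\kappa(\alpha)/t$. A root drops out of $T_H$ only along directions $H$ with $\ip{\alpha}{H}=0$. Then the ray no longer starts near the origin, so you need a bound at its initial point. This is the missing geometric idea. The paper uses Langen's sectors $\{\rho_1\ge\cdots\ge\rho_N\ge0\}$ and rays $x_r=x_0+rH_i$. Along these rays the roots of $R_I$ are constant and absent from the equation, while all other roots satisfy $\ip{\alpha}{x_r}=b_\alpha(r+d_\alpha)$ with $b_\alpha\ge b_0$ and $d_\alpha\ge a$. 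A downward induction over $\mathfrak a^N,\ldots,\mathfrak a^1$, starting from the compact set $\mathfrak a^0$, supplies the bound at each starting point.

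The second missing piece is simultaneity. The $2\times2$ blocks of different roots share coordinates and do not commute, and their frequencies $\varepsilon_\beta=\abs{\ip{\beta}{\xi}}$ tend to zero at unrelated rates. Blockwise diagonalisation with root-dependent time changes $s=\abs{\ip{\alpha}{\xi}}t$ therefore cannot be done for all roots at once. Writing an oscillatory term as a total derivative costs a factor $\varepsilon_\beta^{-1}$, which is affordable only once $\varepsilon_\beta t\gtrsim1$. Meanwhile the roots that are still slow produce cross terms. Proposition~\ref{prop:simultaneous} cuts the ray at all the points $\tau_\beta=K/\varepsilon_\beta-a$. On each piece it conjugates by $\mathsf M=\mathrm{Id}+\mathsf Q_{\mathscr F}$, with $\mathsf Q_\beta(t)=-\int_t^\infty\mathsf A_\beta(r)\,dr$, for the fast roots only. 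The remainder $\mathsf M^{-1}\bigl([\mathsf A_{\mathscr S},\mathsf Q_{\mathscr F}]+\mathsf A_{\mathscr F}\mathsf Q_{\mathscr F}\bigr)$ is $O\bigl(\sum_{\beta\in\mathscr F}\varepsilon_\beta^{-1}(a+r)^{-2}\bigr)$ and has uniformly bounded integral. The Hermitian energy bound is used only for the slow roots.

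The conclusion you expect is also false as stated. There is no decay $t^{-\sum\kappa}$ with constants independent of $\abs{\ip{\alpha}{\xi}}$: the $\bbZ_2^2$ example with $y_\ell=(0,\ell)$ at the end of Section~\ref{sec:4} has $\abs{E_\kappa(i\xi,y_\ell)}\ge c_0$. The evolution gives the ratio $\mathfrak q_\lambda(a+t)/\mathfrak q_\lambda(a)$, that is, growth until $a+t\approx\varepsilon_\beta^{-1}$ and boundedness afterwards. Uniformity appears only after multiplying by $w_\kappa(\xi)^{1/2}=\prod_\beta\varepsilon_\beta^{\kappa(\beta)}$, as in \eqref{eq:product-compensation}. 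The parabolic factorisation ``up to controlled errors'' is asserted without argument. It is essentially the whole difficulty, and the paper never needs it.

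Your derivative step is circular near walls. When $\abs{\ip{\alpha}{y}}<1$, the mean value theorem on $[y,\sigma_\alpha y]$ requires the \emph{weighted} gradient bound at points that are again close to $\alpha^\perp$. The unweighted bound $\abs{\nabla_yE_\kappa(i\xi,y)}\le\norm{\xi}$ cannot absorb $\Omega_\kappa(y)^{1/2}$, which may be large because of the other roots. The paper instead differentiates the ray equation, obtaining $\Theta_v'=\mathsf A\Theta_v+\mathsf R_v\Phi$ with $\norm{\mathsf R_v(r)}\le C\norm{v}(a+r)^{-2}$, and uses variation of constants. Alternatively, once $\nu=0$ is proved, write $\partial_jE_\kappa(i\xi,\cdot)=E_\kappa(i\xi,\cdot)*\partial_j\phi$, where $\phi$ is Schwartz and $\widehat\phi=1$ on $\overline{B(0,M_{\mathcal A})}$ with $M_{\mathcal A}=\sup_{\mathcal A}\norm{\cdot}$. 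This identity holds because R\"osler's measure representing $E_\kappa(i\xi,\cdot)$ is supported in $\overline{B(0,\norm{\xi})}$. It suffices because $\Omega_\kappa(y)\le\Omega_\kappa(y-z)\prod_{\alpha\in R_+}(1+\norm{\alpha}\norm{z})^{2\kappa(\alpha)}$.
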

Since $d\omega(\xi)=w_\kappa(\xi)d\xi$, estimate \eqref{eq:joint}
gives
\begin{equation}\label{eq:packet-joint}
 \sup_{y\in\bbR^N}
 \Omega_\kappa(y)^{1/2}
 \norm{u(\xi)\partial_y^\nu E_\kappa(i\xi,y)}_{L^2(d\omega(\xi))}
 \le
 C_{\mathcal A,\nu}\norm{u}_{L^2(\mathcal A,d\xi)}
\end{equation}
for every $u\in L^2(\mathcal A,d\xi)$, extended by zero outside
$\mathcal A$.
The Euclidean $L^2$ norm on the right allows us to apply the Sobolev
estimates for the localized symbols.

Fix an open chamber $\mathcal C$ and define
$$
 Uf(x)=\bigl(f(gx)\bigr)_{g\in G},
 \qquad x\in\mathcal C.
$$
The map $U$ identifies $L^p(\bbR^N,d\omega)$ isometrically with
$L^p(\mathcal C,d\omega;\ell^p(G))$ for $1\le p\le\infty$.
Its components record the values of $f$ on all chambers.
We write $T_m^G=UT_mU^{-1}$.
This is a finite matrix operator on $\mathcal C$.
We estimate every scalar kernel in the matrix representations of
finite dyadic sums and then pass to $T_m^G$ by $L^2$ convergence.

For $z\in\mathcal C$ and $r>0$, the chamber balls are
$$
 B_{\mathcal C}(z,r)=B(z,r)\cap\mathcal C.
$$
With the Euclidean distance and measure $d\omega$, the chamber
$\mathcal C$ is a doubling space.
We use its componentwise Coifman--Weiss atomic $H^1$ and BMO spaces,
and denote their pullbacks under $U$ by $H^1_{\ch}$ and $\BMO_{\ch}$.
The scalar atomic space consists of $L^1(\mathcal C,d\omega)$
functions admitting atomic representations.
The scalar BMO space consists of functions integrable on every
chamber ball with bounded mean oscillation, modulo constants.
The complete definitions are given in Section~\ref{sec:4}.

For non-$G$-invariant symbols, we retain the reflection
differences of the localized amplitudes and control them
by Lemma~\ref{lem:delta}. In the moment identity \eqref{eq:moment}, the reflection term
involving $\delta_\alpha a$ changes the centre of the kernel
from $gy$ to $\sigma_\alpha gy$.
We estimate together all the kernels with centres $gy$, $g\in G$.
Since $\sigma_\alpha g\in G$, every new centre belongs to the
same finite orbit.
Using the Sobolev estimate above, we control the $L^2(d\xi)$ norm of each amplitude obtained after $L$ applications of the identity by $C_L\norm{a}_{H^L}$.
Applying \eqref{eq:packet-joint} to these amplitudes and using
Plancherel's identity gives the weighted $L^2$ kernel estimates.
Moreover, orthogonality of $g$ gives
$$
 \norm{b_j\circ g}_{H^\sigma}
 =
 \norm{b_j}_{H^\sigma}
 \le\calS_\sigma(m),
 \qquad j\in\bbZ,\quad g\in G.
$$
Thus the same estimates apply to all reflected dyadic symbols.
Corollaries~\ref{cor:angular} and~\ref{cor:mikhlin} give angular
Sobolev and Mikhlin consequences of Theorem~\ref{thm:main}.

This paper is organised as follows.
Section~\ref{sec:2} proves cutoff independence and the global
Sobolev estimates for the reflection terms.
Section~\ref{sec:5} treats the coordinate reflection group,
establishes the required matrix differential equation estimate,
and proves Proposition~\ref{thm:joint} for an arbitrary root system.
Section~\ref{sec:4} combines the joint kernel estimate with the chamber lifting argument to prove Theorem~\ref{thm:main}.

\section{preliminaries}\label{sec:2}

We first show that the dyadic Sobolev condition does not depend
on the choice of cutoff. We then prove Sobolev estimates for
divided difference operators and their compositions with partial
derivatives. Finally, we use the heat kernel to obtain an
$L^2$ estimate for the Dunkl kernel.

\subsection{The dyadic Sobolev condition}

We first verify that the dyadic condition used in Theorem~\ref{thm:main} does not depend on the cutoff.

\begin{lemma}\label{lem:dyadic}
Let $\sigma\ge0$, and let $\psi$ and $\varphi$ be admissible cutoffs.  For every measurable $m$ on
$\bbR^N$,
$$
 C^{-1}\calS_\sigma^\psi(m)
 \le
 \calS_\sigma^\varphi(m)
 \le
 C\calS_\sigma^\psi(m),
$$
where $C=C_{\sigma,\psi,\varphi}\ge1$ is independent of $m$.  If $\sigma>N/2$ and
$\calS_\sigma^\psi(m)<\infty$, then $m\in L^\infty(\bbR^N)$ and
\begin{equation}\label{eq:automatic-Linfty}
 \norm{m}_{L^\infty(\bbR^N)}
 \le
 C_{\sigma,\psi}\calS_\sigma^\psi(m).
\end{equation}
\end{lemma}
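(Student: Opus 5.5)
By symmetry it suffices to prove $\calS_\sigma^\varphi(m)\le C\calS_\sigma^\psi(m)$. The idea is to write each $\varphi$-piece as a finite combination of dilated $\psi$-pieces, multiplied by smooth functions.

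Since $\varphi$ has compact support in $(0,\infty)$, pick $a<b$ with $\supp\varphi\subset[2^{a},2^{b}]$. Since $\sum_k\psi(2^{-k}r)=1$ for $r>0$, on $\supp\varphi$ only the finitely many $k$ with $\abs{k}\le K$ contribute, where $K=K(\psi,\varphi)$. Hence
$$
 \varphi(\norm{\xi})m(2^j\xi)
 =
 \sum_{\abs{k}\le K}\varphi(\norm{\xi})\,\psi(2^{-k}\norm{\xi})\,m(2^{j}\xi).
$$
Set $\eta=2^{-k}\xi$. The $k$-th term equals $\varphi(2^k\norm{\eta})\cdot\bigl[\psi(\norm{\eta})m(2^{j+k}\eta)\bigr]$. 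This is the $\psi$-piece at scale $j+k$, multiplied by the smooth compactly supported function $\theta_k(\eta)=\varphi(2^k\norm{\eta})$; smoothness holds because $\varphi$ vanishes near $0$. Two standard facts then give the bound. Multiplication by a fixed $C_c^\infty$ function is bounded on $H^\sigma$ for every $\sigma\ge0$, with a constant depending on finitely many derivatives of $\theta_k$; this follows by interpolating between integer orders, or directly from the Fourier-side convolution bound $\langle\zeta\rangle^\sigma\le C\langle\zeta-\tau\rangle^\sigma\langle\tau\rangle^{\abs{\sigma}}$. The dilation $f\mapsto f(2^{-k}\cdot)$ is bounded on $H^\sigma$ with a constant depending only on $\abs{k}\le K$, $\sigma$ and $N$. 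Summing over the finitely many $k$ gives $\norm{\varphi(\norm{\cdot})m(2^j\cdot)}_{H^\sigma}\le C\calS_\sigma^\psi(m)$ uniformly in $j$.

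For the $L^\infty$ bound, take $\sigma>N/2$. The Sobolev embedding $H^\sigma\hookrightarrow C_b$ gives $\abs{\psi(\norm{\xi})m(2^j\xi)}\le C\calS_\sigma^\psi(m)$ for a.e. $\xi$. It remains to handle points where $\psi$ may be small. Using the first part with a second admissible cutoff would not help, since any admissible cutoff has zeros. Instead use that $\sum_k\psi(2^{-k}r)=1$ with at most $K'$ nonzero terms at each $r$. For a.e. $\xi\neq0$,
$$
 \abs{m(\xi)}\le\sum_{k}\psi(2^{-k}\norm{\xi})\abs{m(\xi)}
 =\sum_{k}\abs{\psi(\norm{2^{-k}\xi})m(2^k\cdot 2^{-k}\xi)},
$$
and each of the at most $K'$ nonzero terms is bounded by $C\calS_\sigma^\psi(m)$. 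This requires $\psi\ge0$; for general admissible $\psi$, insert absolute values, which still yields a finite sum of bounded terms, since the sum of $\abs{\psi}$ is only needed as an upper bound and we use $1=\sum\psi\le\sum\abs{\psi}$. Writing the identity in absolute values, $\abs{m}=\abs{\sum_k\psi(2^{-k}\norm{\xi})m(\xi)}\le\sum_k\abs{\psi(2^{-k}\norm\xi)m(\xi)}$, which handles the general case. The null sets are countable unions over $k,j$, so the conclusion holds almost everywhere.

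The main technical point is the multiplier estimate on $H^\sigma$ for non-integer $\sigma$. I would handle it via the Fourier-side Peetre inequality, using $\widehat{\theta_k}\in\calS$, rather than through interpolation, so that the constant is explicit in finitely many seminorms of $\theta_k$.
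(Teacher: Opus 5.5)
Your proposal is correct and is essentially the paper's proof. The upper bound uses the same finite decomposition of each $\varphi$-piece into dilated $\psi$-pieces times a smooth radial cutoff, with the Sobolev multiplier and dilation bounds; you multiply before dilating rather than after, which is immaterial. The $L^\infty$ bound uses the same Sobolev embedding, finite overlap and countable-null-set argument.
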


\begin{proof}
To prove the upper bound, assume $\calS_\sigma^\psi(m)<\infty$, otherwise there is nothing to prove.
For $\theta\in\{\psi,\varphi\}$ and $j\in\bbZ$, set
$$
 b_j^\theta(\eta)=\theta(\norm{\eta})m(2^j\eta),
$$
and define the finite set
$$
 I_{\varphi,\psi}
 =
 \bigl\{\ell\in\bbZ:\supp\varphi\cap2^\ell\supp\psi\ne\varnothing\bigr\}.
$$
The partition identity gives
\begin{align*}
 b_j^\varphi(\eta)
 &=
 \varphi(\norm{\eta})m(2^j\eta)
 \sum_{\ell\in\bbZ}\psi(2^{-\ell}\norm{\eta})
 =
 \sum_{\ell\in I_{\varphi,\psi}}
 \varphi(\norm{\eta})b_{j+\ell}^\psi(2^{-\ell}\eta),
\end{align*}
The Sobolev multiplier bound and dilation give
\begin{align*}
 \norm{b_j^\varphi}_{H^\sigma}
 &\le
 C_{\sigma,\varphi}
 \sum_{\ell\in I_{\varphi,\psi}}
 \norm{b_{j+\ell}^\psi(2^{-\ell}\cdot)}_{H^\sigma} \le
 C_{\sigma,\varphi}
 \sum_{\ell\in I_{\varphi,\psi}}
 2^{\ell N/2}\max\{1,2^{-\ell\sigma}\}
 \norm{b_{j+\ell}^\psi}_{H^\sigma}
 \le
 C_{\sigma,\psi,\varphi}\calS_\sigma^\psi(m).
\end{align*}
Taking the supremum over $j$ and then interchanging $\psi$ and $\varphi$ proves the first assertion.

Suppose that $\sigma>N/2$ and $\calS_\sigma^\psi(m)<\infty$.  Choose the continuous Sobolev
representatives of $b_j^\psi$, still denoted by $b_j^\psi$.  Since $\bbZ$ is countable, outside a
single null set in $\bbR^N\setminus\{0\}$,
$$
 b_j^\psi(2^{-j}\xi)
 =
 \psi(2^{-j}\norm{\xi})m(\xi),
 \qquad j\in\bbZ.
$$
Note that
$$
 \sup_{r>0}\#\{j\in\bbZ:\psi(2^{-j}r)\ne0\}<\infty.
$$
Then, Sobolev embedding and finite overlap give, for almost every $\xi\ne0$,
\begin{align*}
 \abs{m(\xi)}
 &=
 \left|\sum_{j\in\bbZ}\psi(2^{-j}\norm{\xi})m(\xi)\right|
 \le
 \sum_{j:\,\psi(2^{-j}\norm{\xi})\ne0}
 \abs{b_j^\psi(2^{-j}\xi)}
 \le
 C_\psi\sup_{j\in\bbZ}\norm{b_j^\psi}_\infty
 \le
 C_{\sigma,\psi}\sup_{j\in\bbZ}\norm{b_j^\psi}_{H^\sigma}
 =
 C_{\sigma,\psi}\calS_\sigma^\psi(m).
\end{align*}
Taking the essential supremum proves \eqref{eq:automatic-Linfty}, and the value at the origin is immaterial.
\end{proof}

\subsection{The divided difference operators}

This subsection proves the algebraic kernel estimates without wall separation.  A reflection divided difference operator is not a zero-order operator when a a smooth amplitude supported in a fixed annulus may concentrate near a wall.  The operator maps $H^{s+1}$ to $H^s$ for $s\ge0$, just as an ordinary first derivative does, so each spatial moment costs one derivative.

For $\alpha\in R$, let
$$
 n_\alpha=\frac{\alpha}{\norm{\alpha}},
 \qquad
 \sigma_\alpha\xi
 =
 \xi-\frac{2\ip{\alpha}{\xi}}{\norm{\alpha}^2}\alpha.
$$
For a smooth function $a$, define the divided difference operator associated with $\sigma_\alpha$ away from $\alpha^\perp$ by, 
\begin{equation}\label{eq:delta}
 \delta_\alpha a(\xi)
 =
 \frac{a(\xi)-a(\sigma_\alpha\xi)}{\ip{\alpha}{\xi}}.
\end{equation}

\begin{lemma}\label{lem:delta}
For $a\in C_c^\infty(\bbR^N)$, the quotient in \eqref{eq:delta} has a unique smooth extension across
$\alpha^\perp$, denoted by $\delta_\alpha a$.  For every $s\ge0$, this operator extends uniquely to a
bounded operator
$
 \delta_\alpha:H^{s+1}(\bbR^N)\longrightarrow H^s(\bbR^N).
$
For $a\in H^{s+1}(\bbR^N)$,
\begin{equation}\label{eq:delta-bound}
 \norm{\delta_\alpha a}_{H^s(\bbR^N)}
 \le
 \frac{C_{s,N}}{\norm{\alpha}}\norm{a}_{H^{s+1}(\bbR^N)}.
\end{equation}
For $a\in C_c^\infty(\bbR^N)$,
\begin{equation}\label{eq:delta-support}
 \delta_\alpha a\in C_c^\infty(\bbR^N),
 \qquad
 \supp(\delta_\alpha a)
 \subset
 \supp a\cup\sigma_\alpha(\supp a).
\end{equation}
\end{lemma}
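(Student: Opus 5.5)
Since $\sigma_\alpha$, $\delta_\alpha$ and the Sobolev norms are all equivariant under orthogonal changes of variables, I will rotate coordinates so that $n_\alpha=e_1$. Write $\xi=(t,\xi')$ with $t\in\bbR$ and $\xi'\in\bbR^{N-1}$. Then $\ip{\alpha}{\xi}=\norm{\alpha}t$ and $\sigma_\alpha\xi=(-t,\xi')$, so
$$
 \delta_\alpha a(\xi)=\frac{1}{\norm{\alpha}}\,\frac{a(t,\xi')-a(-t,\xi')}{t}.
$$
The fundamental theorem of calculus along the segment from $(-t,\xi')$ to $(t,\xi')$ gives the representation
$$
 \delta_\alpha a(t,\xi')=\frac{1}{\norm{\alpha}}\int_{-1}^{1}(\partial_1 a)(\lambda t,\xi')\,d\lambda.
$$

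For $a\in C_c^\infty$, the right-hand side is smooth in $(t,\xi')$, by differentiation under the integral. It agrees with the quotient for $t\ne0$, and uniqueness of the extension follows by continuity since $\alpha^\perp$ has empty interior. For the support claim \eqref{eq:delta-support}, I use the quotient directly. If $\xi\notin\supp a\cup\sigma_\alpha(\supp a)$ with $t\ne0$, then both $a(\xi)$ and $a(\sigma_\alpha\xi)$ vanish. This set is closed and $\sigma_\alpha$-invariant, and its complement is open, so by continuity $\delta_\alpha a$ also vanishes on the part of the complement lying in the wall. Compactness of the support is then immediate.

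For the Sobolev bound, set $b=\partial_1 a\in H^s$. It suffices to show that the averaging operator $Mb(t,\xi')=\int_{-1}^1 b(\lambda t,\xi')\,d\lambda$ is bounded on $H^s$ for every $s\ge0$. The first approach I would try is Fourier analysis. $M$ acts only in the $t$-variable, and for each fixed $\lambda\neq 0$ the dilation $D_\lambda b(t,\xi')=b(\lambda t,\xi')$ has Fourier transform $|\lambda|^{-1}\hat b(\tau/\lambda,\eta')$. Consequently
$$
 \widehat{Mb}(\tau,\eta')=\int_{-1}^1 |\lambda|^{-1}\hat b(\tau/\lambda,\eta')\,d\lambda=2\int_{|\mu|\ge|\tau|}\hat b(\mu,\eta')\,\frac{d\mu}{|\mu|},
$$
after substituting $\mu=\tau/\lambda$. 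This is a Hardy-type operator in $\tau$. The weighted estimate
$$
 \int(1+\tau^2+|\eta'|^2)^s\Bigl|\int_{|\mu|\ge|\tau|}\hat b(\mu,\eta')\frac{d\mu}{|\mu|}\Bigr|^2d\tau\le C_s\int(1+\mu^2+|\eta'|^2)^s|\hat b(\mu,\eta')|^2d\mu
$$
should follow from the dual Hardy inequality with weight $(1+\tau^2+c)^s$, uniformly in the constant $c=|\eta'|^2$. The key point is that the weight is increasing in $|\tau|$, so it can be moved inside the inner integral, where $|\mu|\ge|\tau|$ gives $(1+\tau^2+c)^s\le(1+\mu^2+c)^s$. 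What remains is the unweighted $L^2$ boundedness of the adjoint Hardy operator $g\mapsto\int_{|\mu|\ge|\tau|}g(\mu)\,d\mu/|\mu|$, which is classical.

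A simpler alternative for integer $s$ is to use the operator identities $\partial_{\xi'}^\beta M=M\partial_{\xi'}^\beta$ and $\partial_t^k M b=\int\lambda^k(\partial_1^k b)(\lambda t,\xi')\,d\lambda$, combined with Minkowski's inequality, $\norm{D_\lambda b}_{L^2}=|\lambda|^{-1/2}\norm{b}_{L^2}$ and $\int_{-1}^1|\lambda|^{k-1/2}\,d\lambda<\infty$. The general case would then follow by interpolation. I would nonetheless use the Fourier argument, since it covers all real $s\ge0$ at once.

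The main obstacle is the uniform weighted Hardy inequality at $k=0$, where the factor $|\lambda|^{-1/2}$ sits exactly at the borderline of integrability. The monotonicity reduction above handles it. Density of $C_c^\infty$ in $H^{s+1}$ then gives the unique bounded extension, and the bound takes the form \eqref{eq:delta-bound} with the factor $1/\norm{\alpha}$.
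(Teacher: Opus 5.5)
Your proposal is correct. It shares with the paper the representation $\delta_\alpha a(z+tn_\alpha)=\norm{\alpha}^{-1}\int_{-1}^{1}\partial_{n_\alpha}a(z+rtn_\alpha)\,dr$, the smooth-extension and uniqueness argument, and the density argument for the support inclusion. The two proofs differ only in the Sobolev bound.

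The paper stays in physical space, and its argument is exactly your ``simpler alternative''. It differentiates under the integral to get $\partial_z^\mu\partial_t^q\delta_\alpha a=\norm{\alpha}^{-1}\int_{-1}^{1}r^q\,\partial_z^\mu\partial_t^{q+1}a(z+rtn_\alpha)\,dr$. Minkowski's inequality and the substitution $t\mapsto rt$ then give the integrable weight $|r|^{q-1/2}$, hence the integer orders. The paper then checks that the integer-order extensions are compatible and finishes with complex interpolation.

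Your Fourier route identifies the averaging operator with the adjoint Hardy operator in $\tau$. Moving the monotone weight inside the integral handles every real $s\ge0$ at once, with a constant independent of $s$. It also gives an explicit formula for the extension on all of $H^{s+1}$, so no interpolation or compatibility step is needed. The price is a routine justification that the Fourier transform commutes with $\int_{-1}^{1}D_\lambda b\,d\lambda$. This has to be done as an $L^2$-valued Bochner integral, because $\norm{D_\lambda b}_{L^2}=|\lambda|^{-1/2}\norm{b}_{L^2}$ is integrable in $\lambda$. The $L^1$ Fubini argument fails, since $\norm{D_\lambda b}_{L^1}=|\lambda|^{-1}\norm{b}_{L^1}$ is not.

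There are two harmless slips. First, the change of variables gives $\int_{|\mu|\ge|\tau|}\hat b(\mu,\eta')\,d\mu/|\mu|$ without the factor $2$: $\lambda>0$ and $\lambda<0$ produce the $\mu$ of the same and the opposite sign as $\tau$, respectively. Second, the case $k=0$ is not borderline, since $\int_{-1}^{1}|\lambda|^{-1/2}\,d\lambda=4$. That finite integral is precisely why the paper's Minkowski argument already works at $q=0$.
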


\begin{proof}
Write $\xi=z+tn_\alpha$, where $z\in n_\alpha^\perp$ and $\partial_t=\partial_{n_\alpha}$.  Since
$$
 \ip{\alpha}{z+tn_\alpha}=\norm{\alpha}t,
 \qquad
 \sigma_\alpha(z+tn_\alpha)=z-tn_\alpha,
$$
the fundamental theorem of calculus gives, for $t\ne0$,
\begin{equation*}
 \delta_\alpha a(z+tn_\alpha)
 =
 \frac{a(z+tn_\alpha)-a(z-tn_\alpha)}{\norm{\alpha}t}
 =
 \frac1{\norm{\alpha}}
 \int_{-1}^{1}
 \partial_{n_\alpha}a(z+rtn_\alpha)\,dr.
\end{equation*}
The last integral defines the unique smooth extension across $t=0$, with
$$
 \delta_\alpha a(z)
 =
 \frac2{\norm{\alpha}}\partial_{n_\alpha}a(z).
$$
For every tangential multi-index $\mu$ and integer $q\ge0$, differentiation gives
\begin{align*}
 \partial_z^\mu\partial_t^q\delta_\alpha a(z+tn_\alpha)
 =
 \frac1{\norm{\alpha}}
 \int_{-1}^{1}
 r^q\partial_z^\mu\partial_t^{q+1}a(z+rtn_\alpha)\,dr,
\end{align*}
Minkowski's inequality and the change of variables $t\mapsto rt$ give
\begin{align*}
 \norm{\partial_z^\mu\partial_t^q\delta_\alpha a}_2
 &\le
 \frac1{\norm{\alpha}}
 \int_{-1}^{1}\abs{r}^q
 \norm{(\partial_z^\mu\partial_t^{q+1}a)(z+rtn_\alpha)}_{L^2_{z,t}}\,dr\\
 &=
 \frac1{\norm{\alpha}}
 \int_{-1}^{1}\abs{r}^{q-1/2}\,dr\,
 \norm{\partial_z^\mu\partial_t^{q+1}a}_2\\
 &=
 \frac4{(2q+1)\norm{\alpha}}
 \norm{\partial_z^\mu\partial_t^{q+1}a}_2.
\end{align*}
Consequently, for every integer $m\ge0$,
\begin{align*}
 \norm{\delta_\alpha a}_{H^m}
 &\le
 C_{m,N}\sum_{\abs{\mu}+q\le m}
 \norm{\partial_z^\mu\partial_t^q\delta_\alpha a}_2
 \le
 \frac{C_{m,N}}{\norm{\alpha}}
 \sum_{\abs{\mu}+q\le m}
 \norm{\partial_z^\mu\partial_t^{q+1}a}_2
 \le
 \frac{C_{m,N}}{\norm{\alpha}}\norm{a}_{H^{m+1}}.
\end{align*}
Let $\delta_\alpha^{(m)}:H^{m+1}\to H^m$ be the extension obtained by density.  If
$a\in H^{m+2}$ and $a_k\to a$ in $H^{m+2}$ with $a_k\in C_c^\infty(\bbR^N)$, then
\begin{align*}
 \delta_\alpha^{(m)}a
 &=
 \lim_{k\to\infty}\delta_\alpha a_k
 =
 \delta_\alpha^{(m+1)}a
 \qquad\text{in }H^m.
\end{align*}
So the integer-order extensions are compatible.  If $s=m+\theta$, where $m\ge0$ is an integer and
$0<\theta<1$, complex interpolation yields
\begin{align*}
 \delta_\alpha:H^{s+1}
 &=[H^{m+1},H^{m+2}]_\theta
 \longrightarrow
 [H^m,H^{m+1}]_\theta=H^s.
\end{align*}
Moreover,
\begin{align*}
 \norm{\delta_\alpha a}_{H^s}
 &\le
 \left(\frac{C_{m,N}}{\norm{\alpha}}\right)^{1-\theta}
 \left(\frac{C_{m+1,N}}{\norm{\alpha}}\right)^\theta
 \norm{a}_{H^{s+1}}
 \le
 \frac{C_{s,N}}{\norm{\alpha}}\norm{a}_{H^{s+1}}.
\end{align*}
Density also gives uniqueness.

Finally, with $F_a=\supp a\cup\sigma_\alpha(\supp a)$ and $\sigma_\alpha^2=I$,
\begin{align*}
 \xi\in F_a^c\setminus\alpha^\perp
 &\Longrightarrow
 a(\xi)=a(\sigma_\alpha\xi)=0
 \Longrightarrow
 \delta_\alpha a(\xi)=0.
\end{align*}
Since $F_a^c\setminus\alpha^\perp$ is dense in $F_a^c$, the smooth extension vanishes on $F_a^c$.
Hence $\delta_\alpha a\in C_c^\infty$ and $\supp(\delta_\alpha a)\subset F_a$, proving
\eqref{eq:delta-support}.
\end{proof}

\begin{remark}
The one-derivative loss in \eqref{eq:delta-bound} is sharp when the fixed annulus meets
$\alpha^\perp$: for every $r\ge0$ and $0\le\theta<1$, no estimate
$$
 \norm{\delta_\alpha a}_{H^r}
 \le
 C\norm{a}_{H^{r+\theta}}
$$
can hold uniformly for functions $a$ supported in that annulus.  Let $\mathcal A$ be a fixed compact annulus such that
$\mathcal A^\circ\cap n_\alpha^\perp\ne\varnothing$. For a radial annulus this holds whenever $N\ge2$.  Choose
non-zero functions
$$
 \eta\in C_c^\infty(n_\alpha^\perp),
 \qquad
 \supp\eta\subset \mathcal A^\circ\cap n_\alpha^\perp,
 \qquad
 \zeta\in C_c^\infty((-1,1)),
$$
with $\zeta$ odd.  For some $0<\varepsilon_0\le1$ and every $0<\varepsilon\le\varepsilon_0$, set
$$
 a_\varepsilon(z+tn_\alpha)
 =
 \eta(z)\varepsilon^{-1/2}\zeta(t/\varepsilon),
 \qquad
 \supp a_\varepsilon\subset \mathcal A.
$$
Let $q(u)=\zeta(u)/u$ for $u\ne0$ and $q(0)=\zeta'(0)$.  Then
$q\in C_c^\infty(\bbR)\setminus\{0\}$ and
\begin{align*}
 \delta_\alpha a_\varepsilon(z+tn_\alpha)
 &=
 \frac{a_\varepsilon(z+tn_\alpha)-a_\varepsilon(z-tn_\alpha)}{\norm{\alpha}t}
 =
 \frac{2}{\norm{\alpha}}
 \eta(z)\varepsilon^{-3/2}q(t/\varepsilon).
\end{align*}
Writing $(\omega,\tau)$ for the Fourier variables dual to $(z,t)$ and changing variables
$\rho=\varepsilon\tau$, we have, for $r\ge0$,
$$
 \varepsilon^{-2r}\abs{\rho}^{2r}
 \le
 \bigl(1+\norm{\omega}^2+\varepsilon^{-2}\rho^2\bigr)^r
 \le
 \varepsilon^{-2r}\bigl(1+\norm{\omega}^2+\rho^2\bigr)^r.
$$
Consequently,
\begin{align*}
 \norm{a_\varepsilon}_{H^r}^2
 &\approx
 \int_{n_\alpha^\perp}\int_{\bbR}
 \bigl(1+\norm{\omega}^2+\varepsilon^{-2}\rho^2\bigr)^r
 \abs{\widehat\eta(\omega)}^2\abs{\widehat\zeta(\rho)}^2\,d\rho\,d\omega
 \approx
 \varepsilon^{-2r}.
\end{align*}
Meanwhile,
\begin{align*}
 \norm{\delta_\alpha a_\varepsilon}_{H^r}^2
 &\approx
 \frac{\varepsilon^{-2}}{\norm{\alpha}^2}
 \int_{n_\alpha^\perp}\int_{\bbR}
 \bigl(1+\norm{\omega}^2+\varepsilon^{-2}\rho^2\bigr)^r
 \abs{\widehat\eta(\omega)}^2\abs{\widehat q(\rho)}^2\,d\rho\,d\omega
 \approx
 \varepsilon^{-2(r+1)}.
\end{align*}
Hence, for $r\ge0$ and $0\le\theta<1$,
$$
 \frac{\norm{\delta_\alpha a_\varepsilon}_{H^r}}
 {\norm{a_\varepsilon}_{H^{r+\theta}}}
 \approx
 \varepsilon^{\theta-1}
 \longrightarrow\infty.
$$
Thus no estimate $H^{r+\theta}\to H^r$ with $\theta<1$ holds uniformly for functions $a$ supported in
$A$.  By contrast, if $a\in C_c^\infty(\bbR^N)$ and
$$
 \operatorname{dist}(\supp a,\alpha^\perp)\ge d>0,
$$
then a smooth cutoff of $1/\ip{\alpha}{\xi}$ away from the wall gives
$$
 \norm{\delta_\alpha a}_{H^r}
 \le
 C_{r,N,\alpha,d}
 \bigl(\norm{a}_{H^r}+\norm{a\circ\sigma_\alpha}_{H^r}\bigr)
 \le
 C_{r,N,\alpha,d}\norm{a}_{H^r}.
$$
Hence uniform wall separation makes $\delta_\alpha$ a zero-order operator, whereas without it one full
derivative is necessary.
\end{remark}

For smooth $b$ and $f$, away from $\alpha^\perp$,
\begin{align}\label{eq:delta-Leibniz}
 \delta_\alpha(bf)(\xi)
 &=
 \frac{b(\xi)f(\xi)-b(\sigma_\alpha\xi)f(\sigma_\alpha\xi)}
 {\ip{\alpha}{\xi}}
 =
 b(\xi)\frac{f(\xi)-f(\sigma_\alpha\xi)}
 {\ip{\alpha}{\xi}}
 +
 \frac{b(\xi)-b(\sigma_\alpha\xi)}
 {\ip{\alpha}{\xi}}f(\sigma_\alpha\xi)\nonumber\\
 &=
 b(\xi)\delta_\alpha f(\xi)
 +(\delta_\alpha b)(\xi)(f\circ\sigma_\alpha)(\xi).
\end{align}
This identity extends across $\alpha^\perp$ by continuity.

Let $K\subset\bbR^N$ be compact and set
$$
 K_G=\bigcup_{g\in G}gK.
$$

For $L\ge1$, let
\begin{align}\label{def:composition}
 \mathcal D=A_L\circ\cdots\circ A_1
\end{align}
be a composition of $L$ operators, 
where each $A_j$ is either a partial derivative
$\partial_{\xi_k}$ or a divided difference operator
$\delta_\alpha$. For $L=0$, set $\mathcal D=I$,
where $I$ is the identity operator.
We write $L(\mathcal D)=L$ for the number of factors
in this composition and set
$$
 \mathcal D_0=I,
 \qquad
 \mathcal D_j=A_j\circ\cdots\circ A_1,
 \qquad 1\le j\le L.
$$

\begin{proposition}\label{prop:compositions}
Let $\mathcal D$ be a composition defined in \eqref{def:composition} of order $L$. For every real $M\ge L$ and every
$a\in H^M(\bbR^N)$,
\begin{equation}\label{eq:composition-H}
 \norm{\mathcal D a}_{H^{M-L}}
 \le
 C_{\mathcal D,M}\norm{a}_{H^M}.
\end{equation}
If $\supp a\subset K$ in the distributional sense, then
$
 \supp(\mathcal D_j a)\subset K_G,
 \ \ 
 0\le j\le L.
$
\end{proposition}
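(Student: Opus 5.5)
The plan is induction on $L$, applying Lemma~\ref{lem:delta} or the trivial bound for $\partial_{\xi_k}$ at each step, with the Sobolev index dropping by one per factor.

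For the norm bound, each factor $A_j$ maps $H^{s+1}(\bbR^N)$ boundedly into $H^s(\bbR^N)$ for every real $s\ge0$. For $A_j=\partial_{\xi_k}$ this follows from Plancherel, since $\abs{\eta_k}\le(1+\norm{\eta}^2)^{1/2}$, with constant $1$. For $A_j=\delta_\alpha$ it is \eqref{eq:delta-bound}, with constant $C_{s,N}/\norm{\alpha}$. Given $M\ge L$, I apply these bounds successively to $\mathcal D_{j-1}a\in H^{M-j+1}$, with $s=M-j\ge M-L\ge0$, for $j=1,\dots,L$. This gives
$$
\norm{\mathcal D_j a}_{H^{M-j}}\le \Bigl(\prod_{i\le j}C_i\Bigr)\norm{a}_{H^M},
$$
and $j=L$ yields \eqref{eq:composition-H} with $C_{\mathcal D,M}=\prod_i C_i$. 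The one point to check is that each step uses the unique extension from Lemma~\ref{lem:delta} at the appropriate index. The compatibility shown in that proof (and for interpolated indices, density of $C_c^\infty$) ensures that $\mathcal D a$ is well defined independently of $M$.

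For the support claim I also induct on $j$, proving that $\supp(\mathcal D_j a)\subset K_G$. The case $j=0$ holds since $K\subset K_G$. For the step, suppose $\supp f\subset K_G$ with $f=\mathcal D_{j-1}a$. If $A_j=\partial_{\xi_k}$, differentiation does not enlarge distributional support. If $A_j=\delta_\alpha$, I would like to use \eqref{eq:delta-support}, which gives $\supp\subset K_G\cup\sigma_\alpha K_G=K_G$ because $\sigma_\alpha G=G$. However, \eqref{eq:delta-support} is stated only for $C_c^\infty$ functions, whereas $f$ is merely a Sobolev function.

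This is the main obstacle, and I would handle it by mollification. Take $f_\varepsilon=f*\rho_\varepsilon$ with $\rho_\varepsilon$ supported in $B(0,\varepsilon)$. Then $f_\varepsilon\in C_c^\infty$ (compact support since $K_G$ is compact), $\supp f_\varepsilon\subset (K_G)_\varepsilon$, the $\varepsilon$-neighbourhood, and $f_\varepsilon\to f$ in $H^{s+1}$. By \eqref{eq:delta-support}, $\supp\delta_\alpha f_\varepsilon\subset (K_G)_\varepsilon\cup\sigma_\alpha(K_G)_\varepsilon=(K_G)_\varepsilon$, because $\sigma_\alpha$ is an isometry preserving $K_G$. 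Boundedness gives $\delta_\alpha f_\varepsilon\to\delta_\alpha f$ in $H^s\subset L^2$, so for any test function $\phi$ supported away from $K_G$ the pairing $\ip{\delta_\alpha f}{\phi}=\lim_\varepsilon\ip{\delta_\alpha f_\varepsilon}{\phi}=0$. Hence $\supp\delta_\alpha f\subset K_G$, closing the induction. Here $K_G$ is compact, being a finite union of compact sets, so every intermediate function has compact support and the mollification argument applies at each stage.
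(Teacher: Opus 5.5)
Your proposal is correct and follows essentially the paper's argument: you iterate the order-one bounds to get \eqref{eq:composition-H}, and you obtain the support statement from \eqref{eq:delta-support} by mollifying and pairing against test functions supported away from $K_G$. The only difference is where the mollification happens. You mollify the intermediate function $\mathcal D_{j-1}a$ at every step. The paper instead proves the support claim for smooth $a$ first, then mollifies $a$ once and uses \eqref{eq:composition-H} to get convergence of $\mathcal D_j a_\varepsilon$. Both routes work.
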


\begin{proof}
For $q\ge0$, every order-one letter satisfies
$$
 \norm{\partial_{\xi_k}F}_{H^q}
 \le
 \norm{F}_{H^{q+1}},
 \qquad
 \norm{\delta_\alpha F}_{H^q}
 \le
 \frac{C_{q,N}}{\norm{\alpha}}\norm{F}_{H^{q+1}}.
$$
Since $M\ge L$, iteration gives, for $1\le j\le L$,
\begin{align*}
 \norm{\mathcal D_j a}_{H^{M-j}}
 &\le
 C_{A_j,M}
 \norm{\mathcal D_{j-1}a}_{H^{M-j+1}}
 \le
 \bigg(\prod_{\ell=1}^jC_{A_\ell,M}\bigg)\norm{a}_{H^M}
 \le
 C_{\mathcal D_j,M}\norm{a}_{H^M}.
\end{align*}
Taking $j=L$ proves \eqref{eq:composition-H}.

Suppose first that $a\in C_c^\infty(\bbR^N)$ and $\supp a\subset K$.  Since $K_G$ is
$G$-invariant, Euclidean differentiation gives
\begin{align*}
 \supp F\subset K_G
 &\Longrightarrow
 \supp(\partial_{\xi_k}F)\subset K_G.
\end{align*}
By \eqref{eq:delta-support},
\begin{align*}
 \supp F\subset K_G
 &\Longrightarrow
 \supp(\delta_\alpha F)
 \subset K_G\cup\sigma_\alpha K_G
 =K_G,
\end{align*}
and hence, by induction,
$$
 \supp(\mathcal D_ja)\subset K_G,
 \qquad
 0\le j\le L.
$$

Now let $a\in H^M$ have distributional support in $K$.  Choose
$\rho\in C_c^\infty(B(0,1))$ with
$\int_{\bbR^N}\rho(\xi)\,d\xi=1$, and set
$$
 \rho_\varepsilon(\xi)
 =
 \varepsilon^{-N}\rho(\xi/\varepsilon),
 \qquad
 a_\varepsilon=\rho_\varepsilon*a,
 \qquad
 K_\varepsilon=K+\overline{B(0,\varepsilon)}.
$$
The mollifier approximation gives
$$
 a_\varepsilon
 \longrightarrow a
 \qquad\text{in }H^M.
$$
Moreover,
$
 \supp a_\varepsilon
 \subset K_\varepsilon,
$
and
$$
 (K_\varepsilon)_G
 =K_G+\overline{B(0,\varepsilon)}.
$$
For $0\le j\le L$, the smooth support result gives
$$
 \supp(\mathcal D_ja_\varepsilon)
 \subset
 K_G+\overline{B(0,\varepsilon)},
$$
whereas \eqref{eq:composition-H} gives
\begin{align*}
 \norm{\mathcal D_j(a_\varepsilon-a)}_{H^{M-j}}
 &\le
 C_{\mathcal D_j,M}\norm{a_\varepsilon-a}_{H^M}
 \longrightarrow0.
\end{align*}
Thus, for $\phi\in C_c^\infty(\bbR^N\setminus K_G)$ and all sufficiently small $\varepsilon$,
\begin{align*}
 \left\langle\mathcal D_j a,\phi\right\rangle
 &=
 \lim_{\varepsilon\to0}
 \left\langle\mathcal D_j a_\varepsilon,\phi\right\rangle
 =0,
\end{align*}
so $\supp(\mathcal D_j a)\subset K_G$ for $0\le j\le L$.
\end{proof}

\section{Uniform Dunkl-kernel estimates across spectral walls}\label{sec:5}

We first prove the product case.  We then prove an evolution estimate for simultaneous transitions and
use induction over spatial sectors to prove the general case.

\subsection{Coordinate reflections}

We first consider the group generated by reflections across
the coordinate hyperplanes in $\bbR^N$. In this case, the
Dunkl kernel is a product of one-dimensional Dunkl kernels.
We use this product formula and the corresponding
one-dimensional estimates to prove Proposition~\ref{thm:joint}.

\begin{proposition}
For this group, Proposition~\ref{thm:joint} holds for all
non-negative multiplicities.
\end{proposition}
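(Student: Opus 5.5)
For the group $\bbZ_2^N$ generated by the coordinate reflections, the root system is $R_+=\{\sqrt2 e_j\}$ (normalised so that $\norm{\alpha}^2=2$). The multiplicities are $\kappa_j\ge0$. Everything factorises:
$$
 E_\kappa(i\xi,y)=\prod_{j=1}^N E_{\kappa_j}(i\xi_j,y_j),
$$
with one-dimensional Dunkl kernels
$$
 E_k(i\lambda,t)=j_{k-1/2}(\lambda t)+\frac{i\lambda t}{2k+1}j_{k+1/2}(\lambda t),
$$
where $j_\nu$ is the normalised Bessel function. Likewise, up to fixed constants,
$$
 w_\kappa(\xi)=\prod_j\abs{\xi_j}^{2\kappa_j},
 \qquad
 \Omega_\kappa(y)\approx\prod_j(1+\abs{y_j})^{2\kappa_j}.
$$
Since $\abs{\nu}\le1$, the derivative $\partial_y^\nu$ falls on at most one factor. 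Hence \eqref{eq:joint} reduces to two one-dimensional bounds, uniform in $\lambda$ in a bounded set $\abs{\lambda}\le\Lambda$ (where $\Lambda$ bounds $\mathcal A$), and in $t\in\bbR$:
$$
 \text{(a)}\ \ \abs{\lambda}^{k}(1+\abs{t})^{k}\abs{E_k(i\lambda,t)}\le C,
 \qquad
 \text{(b)}\ \ \abs{\lambda}^{k}(1+\abs{t})^{k}\abs{\partial_tE_k(i\lambda,t)}\le C.
$$
The point is that we do not need $\abs{\xi_j}$ bounded below for any coordinate. Only $\abs{\xi_j}\le\Lambda$ is used, so the estimate is automatically uniform near intersections of coordinate hyperplanes.

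For (a), use $\abs{E_k(i\lambda,t)}\le1$ together with the classical Bessel decay. Since $j_\nu(s)=\Gamma(\nu+1)(s/2)^{-\nu}J_\nu(s)$ and $\abs{J_\nu(s)}\le C s^{-1/2}$ for $s\ge1$, we get
$$
 \abs{j_{k-1/2}(s)}+\abs{s\,j_{k+1/2}(s)}\le C(1+\abs{s})^{-k}.
$$
Thus $\abs{E_k(i\lambda,t)}\le C(1+\abs{\lambda t})^{-k}$. It remains to check the product with $\abs{\lambda}^k(1+\abs{t})^k$. If $\abs{t}\le1$, it is at most $C\Lambda^k 2^k$. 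If $\abs{t}\ge1$, then
$$
 \abs{\lambda}^k(1+\abs{t})^k\le 2^k\abs{\lambda t}^k\le 2^k(1+\abs{\lambda t})^k,
$$
and (a) follows.

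For (b), write $E_k(i\lambda,t)=F_k(\lambda t)$, so that $\partial_tE_k=\lambda F_k'(\lambda t)$. The derivative can be computed from the Bessel recurrences, $j_\nu'(s)=-\frac{s}{2(\nu+1)}j_{\nu+1}(s)$. This expresses $F_k'(s)$ as a combination of $s\,j_{k+1/2}(s)$, $j_{k+1/2}(s)$ and $s^2 j_{k+3/2}(s)$. Each of these is $O((1+\abs{s})^{-k})$: the first two by the bound used in (a), and the third because $s^2j_{k+3/2}(s)=O(s^{2-(k+1)-1/2})=O(s^{1/2-k})$.

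The third term is where the argument needs care. It is a priori only $O(s^{1/2-k})$, not $O(s^{-k})$, so I must check cancellation between it and the other terms. The cleanest route is the eigen-equation $T_t E_k=i\lambda E_k$. Written out,
$$
 \partial_tE_k(i\lambda,t)=i\lambda E_k(i\lambda,t)-k\,\frac{E_k(i\lambda,t)-E_k(i\lambda,-t)}{t}.
$$
The first term on the right is bounded by $\Lambda\abs{E_k}$, which is handled by (a). The difference quotient equals $\frac{2i\lambda}{2k+1}j_{k+1/2}(\lambda t)$. This is bounded by $C\abs{\lambda}$ for $\abs{\lambda t}\le1$ and by $C\abs{\lambda}\abs{\lambda t}^{-k-1}$ otherwise. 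Hence
$$
 \abs{\partial_tE_k(i\lambda,t)}\le C(1+\abs{\lambda t})^{-k}
$$
uniformly for $\abs{\lambda}\le\Lambda$, and the reduction argument from (a) gives (b). This identity avoids the cancellation issue entirely and yields (b) for every $k\ge0$, including $k=0$, where $E_0=e^{i\lambda t}$ trivially.

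Finally, multiply the one-dimensional bounds over $j$, using the derivative bound in at most one coordinate. The constants depend only on $\kappa$, $N$ and $\Lambda$, which gives \eqref{eq:joint} for the coordinate reflection group.
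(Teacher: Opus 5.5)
Your argument is correct and follows the paper's proof: the product formula, the rank-one bound $|E_k(iu)|+|\frac{d}{du}E_k(iu)|\le C_k(1+|u|)^{-k}$, and the elementary comparison $|\xi_\ell|^{k_\ell}(1+|y_\ell|)^{k_\ell}(1+|\xi_\ell y_\ell|)^{-k_\ell}\le C$. The only difference is that you get the derivative bound from the rank-one eigen-equation $\partial_t E_k=i\lambda E_k-k\,(E_k(i\lambda,t)-E_k(i\lambda,-t))/t$, whereas the paper differentiates directly using $j_\nu'(u)=-\frac{u}{2(\nu+1)}j_{\nu+1}(u)$, and your worry about that direct route is an arithmetic slip: $j_{k+3/2}(s)=O((1+|s|)^{-k-2})$ gives $s^2j_{k+3/2}(s)=O((1+|s|)^{-k})$ with no cancellation needed, so both routes work.
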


\begin{proof}
For the rank-one multiplicity $k\ge0$ \cite[Section~2]{RdJ2002},
$$
 E_k(iu)
 =
 j_{k-1/2}(u)
 +
 \frac{iu}{2k+1}j_{k+1/2}(u),
 \qquad
 j_\nu(u)
 =
 \Gamma(\nu+1)
 \sum_{n=0}^\infty
 \frac{(-1)^n(u/2)^{2n}}
 {n!\,\Gamma(n+\nu+1)},
 \qquad
 u\in\bbR.
$$
The function $j_\nu$ is even and entire. For $u>0$,
$$
 j_\nu(u)
 =
 2^\nu\Gamma(\nu+1)u^{-\nu}J_\nu(u).
$$
For $\nu\ge-1/2$, the standard Bessel estimate and
termwise differentiation give
$$
 \abs{j_\nu(u)}
 \le
 C_\nu(1+\abs{u})^{-\nu-1/2},
 \qquad
 j_\nu'(u)
 =
 -\frac{u}{2(\nu+1)}j_{\nu+1}(u).
$$
Consequently,
\begin{align}\label{eq:rank-one}
 \abs{E_k(iu)}
 +
 \abs{\frac d{du}E_k(iu)}
 &\le
 C_k\Bigl(
 (1+\abs{u})^{-k}
 +\abs{u}(1+\abs{u})^{-k-1}
 +(1+\abs{u})^{-k-1}
 +\abs{u}^2(1+\abs{u})^{-k-2}
 \Bigr)\notag\\
 &\le
 C_k(1+\abs{u})^{-k},
 \qquad
 u\in\bbR.
\end{align}

Let the multiplicities be $k_1,\ldots,k_N$ and set
$
 M_\mathcal A=\sup_{\xi\in \mathcal A}\norm{\xi}.
$

For $\xi\in \mathcal A\setminus\calW$, factorisation gives
$$
 E_\kappa(i\xi,y)
 =
 \prod_{\ell=1}^NE_{k_\ell}(i\xi_\ell y_\ell) \qquad \text{and}\qquad 
 w_\kappa(\xi)^{1/2}\Omega_\kappa(y)^{1/2}
 \approx
 \prod_{\ell=1}^N
 \abs{\xi_\ell}^{k_\ell}(1+\abs{y_\ell})^{k_\ell}.
$$
For each $1\le\ell\le N$,
\begin{align*}
 \abs{\xi_\ell}^{k_\ell}(1+\abs{y_\ell})^{k_\ell}
 (1+\abs{\xi_\ell y_\ell})^{-k_\ell}
 &=
 \left(
 \frac{\abs{\xi_\ell}+\abs{\xi_\ell y_\ell}}
 {1+\abs{\xi_\ell y_\ell}}
 \right)^{k_\ell}
 \le
 (1+M_\mathcal A)^{k_\ell}.
\end{align*}
Hence \eqref{eq:rank-one} gives
\begin{align*}
 w_\kappa(\xi)^{1/2}\Omega_\kappa(y)^{1/2}
 \abs{E_\kappa(i\xi,y)}
 &\le
 C_\kappa
 \prod_{\ell=1}^N
 \abs{\xi_\ell}^{k_\ell}(1+\abs{y_\ell})^{k_\ell}
 (1+\abs{\xi_\ell y_\ell})^{-k_\ell}
 \le
 C_\kappa\prod_{\ell=1}^N(1+M_\mathcal A)^{k_\ell}
 \le
 C_{A,\kappa}.
\end{align*}
Moreover, for $1\le j\le N$,
\begin{align*}
 \partial_{y_j}E_\kappa(i\xi,y)
 &=
 \xi_j
 \left.\frac d{du}E_{k_j}(iu)\right|_{u=\xi_jy_j}
 \prod_{\ell\ne j}E_{k_\ell}(i\xi_\ell y_\ell)
 \end{align*}
 and
 \begin{align*}
w_\kappa(\xi)^{1/2}\Omega_\kappa(y)^{1/2}
 \abs{\partial_{y_j}E_\kappa(i\xi,y)}
 &\le
 C_\kappa\abs{\xi_j}
 \prod_{\ell=1}^N
 \abs{\xi_\ell}^{k_\ell}(1+\abs{y_\ell})^{k_\ell}
 (1+\abs{\xi_\ell y_\ell})^{-k_\ell}
 \le
 C_\kappa M_\mathcal A\prod_{\ell=1}^N(1+M_\mathcal A)^{k_\ell}
 \le
 C_{A,\kappa}.
\end{align*}
This proves \eqref{eq:joint}.
\end{proof}

\subsection{An estimate for a matrix differential equation}

In the next subsection, we estimate the Dunkl kernel and its first
derivatives by studying ordinary differential equations along rays
in the spatial variable. These equations have oscillatory coefficients
with frequencies $|\langle\beta,\lambda\rangle|$, $\beta\in R_+$.
As $\lambda$ approaches the reflecting hyperplanes, several of these
frequencies may tend to zero at different rates. We prove below an
estimate for such equations that shows explicitly how the bound depends
on each frequency. This dependence will allow us to obtain a uniform
bound for the kernel and its first derivatives after multiplication
by the spectral weight $w_\kappa(\lambda)^{1/2}$.

We shall use the following estimate for a finite system of linear
differential equations. The coefficient matrix is a sum of matrices
whose entries oscillate at possibly different frequencies. We do not
assume that these matrices commute.

For $\varepsilon>0$, set
$$
 q_\varepsilon(u)=\frac{u}{1+\varepsilon u},
 \qquad u>0.
$$

\begin{proposition}\label{prop:simultaneous}
Let $\mathscr B$ be a finite index set, and let $k_\beta\ge0$ and
$\varepsilon_\beta>0$ for $\beta\in\mathscr B$.
Let $a\ge1$ and $c_0,b_0,C_{\rm rem}>0$.
Fix an orthonormal basis of a finite-dimensional complex Hilbert
space $\mathcal H$.

For each $\beta\in\mathscr B$, suppose that a permutation of this
basis puts $\mathsf A_\beta(t)$ into a direct sum of a possibly
empty zero block and finitely many blocks of the form
$$
 \frac{k_\beta}{t+d_{\beta,\ell}}
 \begin{pmatrix}
 0&
 e^{-i\eta_{\beta,\ell}\varepsilon_\beta
 b_{\beta,\ell}(t+d_{\beta,\ell})}\\[8pt]
 e^{i\eta_{\beta,\ell}\varepsilon_\beta
 b_{\beta,\ell}(t+d_{\beta,\ell})}&0
 \end{pmatrix},
 \qquad t\ge0.
$$
The permutation, the block decomposition, and all block parameters
are independent of $t$. They may depend on $\beta$, and
$$
 d_{\beta,\ell}\ge c_0a,
 \qquad
 b_{\beta,\ell}\ge b_0,
 \qquad
 \eta_{\beta,\ell}\in\{-1,1\}.
$$
Let $\mathsf R:[0,\infty)\to\mathcal L(\mathcal H)$ be strongly
measurable and satisfy
$$
 \norm{\mathsf R(t)}
 \le C_{\rm rem}(a+t)^{-2}
 \quad\text{for almost every }t\ge0.
$$
Set
$$
 \mathsf A(t)
 =\sum_{\beta\in\mathscr B}\mathsf A_\beta(t)+\mathsf R(t).
$$
For $s\ge0$, let $\mathscr U(t,s)$ be the unique solution, locally
absolutely continuous in $t\ge s$, of
$$
 \partial_t\mathscr U(t,s)=\mathsf A(t)\mathscr U(t,s)
 \quad\text{for almost every }t\ge s,
 \qquad
 \mathscr U(s,s)=\mathrm{Id}.
$$
Then
\begin{equation}\label{eq:simultaneous-evolution}
 \norm{\mathscr U(t,s)}
 \le
 C\prod_{\beta\in\mathscr B}
 \left(
 \frac{q_{\varepsilon_\beta}(a+t)}
 {q_{\varepsilon_\beta}(a+s)}
 \right)^{k_\beta},
 \qquad 0\le s\le t.
\end{equation}
Here the norms are operator norms on $\mathcal H$, and $C$ depends
only on $|\mathscr B|$, $\{k_\beta\}_{\beta\in\mathscr B}$,
$c_0$, $b_0$, and $C_{\rm rem}$. In particular, it is independent
of $\dim\mathcal H$, $a$, the number of blocks, and the parameters
$\varepsilon_\beta$, $d_{\beta,\ell}$, $b_{\beta,\ell}$, and
$\eta_{\beta,\ell}$.
\end{proposition}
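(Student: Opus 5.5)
The plan is a piecewise-in-time argument. On each time interval, every $\beta\in\mathscr B$ is either \emph{pre-transition} or \emph{post-transition}. Pre-transition contributions are handled by Gronwall's inequality. Post-transition contributions are removed by a near-identity gauge transformation built from the antiderivative of the oscillatory block. This matches the two regimes of $q_\varepsilon$: $q_\varepsilon(u)\approx u$ for $u\lesssim\varepsilon^{-1}$ and $q_\varepsilon(u)\approx\varepsilon^{-1}$ for $u\gtrsim\varepsilon^{-1}$. It therefore suffices to prove \eqref{eq:simultaneous-evolution} with $q_{\varepsilon_\beta}(a+t)$ replaced by $\min\{a+t,\,K/\varepsilon_\beta\}$, at the cost of a constant depending on $K$.

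Two facts are used throughout. First, since $\mathsf A_\beta(t)$ is, after a fixed permutation, a direct sum of blocks, its operator norm is the maximum over blocks. Hence
$$
 \norm{\mathsf A_\beta(t)}\le \frac{k_\beta}{t+\min_\ell d_{\beta,\ell}}\le \frac{k_\beta}{t+c_0a},
$$
uniformly in $\dim\mathcal H$ and in the number of blocks. Second, since $(t+c_0a)/(t+a)$ lies between $\min\{c_0,1\}$ and $\max\{c_0,1\}$, we get
$$
 \int_s^t\frac{k_\beta\,du}{u+c_0a}\le k_\beta\log\frac{a+t}{a+s}+C(k_\beta,c_0).
$$
This keeps the exponent exactly $k_\beta$. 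The remainder satisfies $\int_0^\infty\norm{\mathsf R}\le C_{\rm rem}/a\le C_{\rm rem}$, so it only contributes a multiplicative constant via Gronwall.

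\textbf{Gauge transformation.} For each $\beta$, choose the transition time $T_\beta$ with $a+T_\beta=K/(b_0\varepsilon_\beta)$, where $K$ is a large constant to be fixed. For $t\ge T_\beta$, define blockwise
$$
 \Phi_\beta(t)=-\int_t^\infty\mathsf A_\beta(u)\,du .
$$
This integral converges, and after integrating by parts once in each block,
$$
 \norm{\Phi_\beta(t)}\le \frac{Ck_\beta}{\varepsilon_\beta b_0(t+c_0a)}\le \frac{C'k_\beta}{K},
 \qquad
 \Phi_\beta'=\mathsf A_\beta .
$$
On an interval where the post-transition set $S$ is fixed, put $W=(\mathrm{Id}-\Phi_S)\mathscr U$ with $\Phi_S=\sum_{\beta\in S}\Phi_\beta$. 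Choosing $K$ large, depending on $|\mathscr B|$, the $k_\beta$ and $c_0$, gives $\norm{\Phi_S}\le1/2$, so $\mathrm{Id}-\Phi_S$ is invertible with bounded inverse. Then
$$
 W'=\Bigl[\sum_{\gamma\notin S}\mathsf A_\gamma+\mathsf R-\Phi_S\mathsf A\Bigr](\mathrm{Id}-\Phi_S)^{-1}W .
$$
The cross terms satisfy $\norm{\Phi_\beta\mathsf A_\gamma}\lesssim\bigl(\varepsilon_\beta(t+c_0a)^2\bigr)^{-1}$. Their integral over $[T_\beta,\infty)$ is bounded by a constant, uniformly in $\varepsilon_\beta$. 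This is exactly where the non-integrable $1/t$ term of each post-transition block is cancelled.

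\textbf{Assembling the bound.} We split $[s,t]$ at the at most $|\mathscr B|$ transition times that lie inside it. On each subinterval, Gronwall applied to $W$ gives growth at most $\prod_{\gamma\notin S}\bigl((a+t')/(a+s')\bigr)^{k_\gamma}$ times a constant. This is the required ratio of the truncated functions on that subinterval, since for $\gamma\notin S$ we are in the linear regime. Each conversion between $W$ and $\mathscr U$ costs a factor at most $2$. Multiplying over at most $|\mathscr B|+1$ subintervals gives \eqref{eq:simultaneous-evolution}, with $C$ depending only on $|\mathscr B|$, $\{k_\beta\}$, $c_0$, $b_0$ and $C_{\rm rem}$.

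\textbf{Main obstacle.} The delicate step is controlling the error terms uniformly when several $\varepsilon_\beta$ are of very different sizes, and when $s$ already lies beyond some transitions. The danger is that products $\Phi_\beta\mathsf A_\gamma$, with $\gamma$ still pre-transition and carrying only $1/t$ decay, accumulate. I would verify directly that these products are integrable over the whole post-transition range of $\beta$, independently of $\gamma$. I would also verify that the shifts $d_{\beta,\ell}\ge c_0a$, which may be arbitrarily large, enter only through $t+d_{\beta,\ell}\ge t+c_0a$ and through the phase's constant derivative $\eta_{\beta,\ell}\varepsilon_\beta b_{\beta,\ell}$, with $b_{\beta,\ell}\ge b_0$. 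Together these keep every constant independent of the block parameters.
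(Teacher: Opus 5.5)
Your proposal is correct and follows essentially the same route as the paper. Both split $[s,t]$ at the transition points where $\varepsilon_\beta(a+r)$ reaches a fixed large constant $K$, and both remove the post-transition blocks with the near-identity gauge built from $-\int_t^\infty\mathsf A_\beta$; the paper writes $Y=(\mathrm{Id}+\mathsf Q_{\mathscr F})V$ where you write $W=(\mathrm{Id}-\Phi_S)\mathscr U$, then applies Gronwall with the pre-transition blocks and multiplies over at most $|\mathscr B|+1$ subintervals. The one point to make explicit is that your main term $\mathsf A_{\mathrm{slow}}(\mathrm{Id}-\Phi_S)^{-1}$ must be written as $\mathsf A_{\mathrm{slow}}+\mathsf A_{\mathrm{slow}}\Phi_S(\mathrm{Id}-\Phi_S)^{-1}$, with the second piece integrable, to keep the exponent exactly $k_\gamma$; this plays the role of the paper's commutator term $\mathsf A_{\mathscr S}\mathsf Q_{\mathscr F}-\mathsf Q_{\mathscr F}\mathsf A_{\mathscr S}$.
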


\begin{proof}
Each nonzero block of $\mathsf A_\beta(t)$ is Hermitian and has
eigenvalues $\pm k_\beta/(t+d_{\beta,\ell})$.
The norm of a direct sum is the maximum of the norms of its
blocks. Let $c_*=\min\{1,c_0\}$ and $K_0=\sum_{\beta\in\mathscr B}k_\beta$. Since
$$
 t+d_{\beta,\ell}\ge t+c_0a\ge c_*(a+t),
$$
we have
$$
 \norm{\mathsf A_\beta(t)}
 \le\frac{k_\beta}{t+c_0a}
 \le\frac{k_\beta}{c_*(a+t)},
 \qquad
 \norm{\mathsf A(t)}
 \le\frac{K_0}{c_*(a+t)}
      +\frac{C_{\rm rem}}{(a+t)^2}
$$
for almost every $t\ge0$. Thus $\mathsf A$ is integrable on every
bounded interval. For fixed $s<T$, successive approximation in
$$
 \mathscr U(t,s)
 =\mathrm{Id}+\int_s^t\mathsf A(r)\mathscr U(r,s)\,dr
$$
converges uniformly for $s\le t\le T$: the norm of its term with
$n$ integrals is at most
$$
 \frac1{n!}\left(\int_s^T\norm{\mathsf A(r)}\,dr\right)^n.
$$
This gives a locally absolutely continuous solution, and
Gronwall's inequality gives uniqueness.

We first estimate an integral of each $\mathsf A_\beta$.
Fix one block and write
$$
 d=d_{\beta,\ell},
 \qquad
 \omega=\eta_{\beta,\ell}\varepsilon_\beta b_{\beta,\ell}.
$$
Then $|\omega|=\varepsilon_\beta b_{\beta,\ell}>0$.
Integration by parts on a finite interval, followed by passage to
the limit at its upper endpoint, gives
\begin{align*}
 \int_t^\infty
 \frac{k_\beta e^{-i\omega(r+d)}}{r+d}\,dr
 &=
 \frac{k_\beta e^{-i\omega(t+d)}}{i\omega(t+d)}
 -\frac{k_\beta}{i\omega}
  \int_t^\infty\frac{e^{-i\omega(r+d)}}{(r+d)^2}\,dr.
\end{align*}
The boundary term at infinity tends to zero, and the last
integral is absolutely convergent. Consequently,
\begin{align*}
 \left|
 \int_t^\infty
 \frac{k_\beta e^{-i\omega(r+d)}}{r+d}\,dr
 \right|
 &\le
 \frac{k_\beta}{|\omega|(t+d)}
 +\frac{k_\beta}{|\omega|}
  \int_t^\infty\frac{dr}{(r+d)^2}
 =\frac{2k_\beta}{|\omega|(t+d)}
 \le\frac{2k_\beta}{b_0c_*\varepsilon_\beta(a+t)}.
\end{align*}
The other off-diagonal entry is the complex conjugate.
Thus
$$
 \mathsf Q_\beta(t)
 =-\int_t^\infty\mathsf A_\beta(r)\,dr
$$
is well defined. Applying the same direct-sum norm formula as
above, and setting $C_0=2/(b_0c_*)$, gives
$$
 \norm{\mathsf Q_\beta(t)}
 \le\frac{C_0k_\beta}{\varepsilon_\beta(a+t)}.
$$
Also,
$$
 \mathsf Q_\beta(t+h)-\mathsf Q_\beta(t)
 =\int_t^{t+h}\mathsf A_\beta(r)\,dr.
$$
Since $\mathsf A_\beta$ is continuous, $\mathsf Q_\beta$ is
continuously differentiable and
$\mathsf Q_\beta'=\mathsf A_\beta$.

Choose
$$
 K=\max\{2,2C_0K_0\}.
$$
Then $C_0K_0/K\le1/2$.
Fix $0\le s<t$. Divide $[s,t]$ at the distinct points
$$
 \tau_\beta=\frac K{\varepsilon_\beta}-a,
 \qquad \beta\in\mathscr B,
$$
that lie strictly between $s$ and $t$.
Write the resulting partition as
$$
 s=u_0<u_1<\cdots<u_m=t,
 \qquad
 1\le m\le|\mathscr B|+1,
$$
where $u_1,\ldots,u_{m-1}$ are the distinct points $\tau_\beta$
in $(s,t)$, listed in increasing order.
Thus each open interval $(u_{j-1},u_j)$ contains no point
$\tau_\beta$.

We first prove the estimate on each subinterval.
Fix $j\in\{1,\ldots,m\}$ and write
$$
 u=u_{j-1},
 \qquad
 v=u_j.
$$
Each $\tau_\beta$ satisties either $\tau_\beta\le u$ or $\tau_\beta\ge v$. 
Set
$$
 \mathscr F=\{\beta\in\mathscr B:\tau_\beta\le u\},
 \qquad
 \mathscr S=\{\beta\in\mathscr B:\tau_\beta\ge v\}.
$$
These two sets partition $\mathscr B$. 
Note that $\tau_\beta$ is the point where $\varepsilon_\beta(a+r)=K$. 
Then, for every $r\in[u,v]$,
$$
 \varepsilon_\beta(a+r)\ge K
 \quad\text{if }\beta\in\mathscr F,
 \qquad
 \varepsilon_\beta(a+r)\le K
 \quad\text{if }\beta\in\mathscr S.
$$

Write
$$
 \mathsf A_{\mathscr F}
 =\sum_{\beta\in\mathscr F}\mathsf A_\beta,
 \qquad
 \mathsf A_{\mathscr S}
 =\sum_{\beta\in\mathscr S}\mathsf A_\beta,
 \qquad
 \mathsf Q_{\mathscr F}
 =\sum_{\beta\in\mathscr F}\mathsf Q_\beta.
$$
Empty sums are understood to be zero.
For $r\in[u,v]$,
$$
 \norm{\mathsf Q_{\mathscr F}(r)}
 \le
 C_0\sum_{\beta\in\mathscr F}
 \frac{k_\beta}{\varepsilon_\beta(a+r)}
 \le\frac{C_0K_0}{K}
 \le\frac12.
$$
If we define
$
 \mathsf M(r)=\mathrm{Id}+\mathsf Q_{\mathscr F}(r)
$, 
it follows that $\mathsf M'=\mathsf A_{\mathscr F}$ and 
$
 \mathsf M(r)
$
is invertible on the whole closed interval, with
$$
 \norm{\mathsf M(r)}\le\frac32,
 \qquad
 \norm{\mathsf M(r)^{-1}}
 \le\sum_{n=0}^\infty\norm{\mathsf Q_{\mathscr F}(r)}^n
 \le2.
$$

Let $Y:[u,v]\to\mathcal H$ solve $Y'=\mathsf A Y$, and define
$V=\mathsf M^{-1}Y$. This is a locally absolutely continuous
$\mathcal H$-valued function. Differentiation gives
\begin{align*}
 \mathsf M V'
 &=
 \bigl[
 (\mathsf A_{\mathscr S}+\mathsf A_{\mathscr F}+\mathsf R)
 (\mathrm{Id}+\mathsf Q_{\mathscr F})
 -\mathsf A_{\mathscr F}
 \bigr]V
 =
 \bigl[
 \mathsf A_{\mathscr S}
 +\mathsf A_{\mathscr S}\mathsf Q_{\mathscr F}
 +\mathsf A_{\mathscr F}\mathsf Q_{\mathscr F}
 +\mathsf R\mathsf M
 \bigr]V
\end{align*}
almost everywhere on $[u,v]$. Therefore
$$
 V'=(\mathsf A_{\mathscr S}
      +\widetilde{\mathsf R}_{\mathscr F})V,
$$
where
\begin{align*}
 \widetilde{\mathsf R}_{\mathscr F}
 =
 \mathsf M^{-1}\bigl(
 &\mathsf A_{\mathscr S}\mathsf Q_{\mathscr F}
 -\mathsf Q_{\mathscr F}\mathsf A_{\mathscr S}
 +\mathsf A_{\mathscr F}\mathsf Q_{\mathscr F}
 +\mathsf R\mathsf M
 \bigr).
\end{align*}
Here we used
$\mathsf M\mathsf A_{\mathscr S}
=\mathsf A_{\mathscr S}
 +\mathsf Q_{\mathscr F}\mathsf A_{\mathscr S}$.
No commutation assumption is used.

We next bound the integral of $\widetilde{\mathsf R}_{\mathscr F}$.
The preceding estimates imply that
\begin{align*}
 \norm{\widetilde{\mathsf R}_{\mathscr F}(r)}
 &\le
 3\norm{\mathsf R(r)}
 +\bigl(
 4\norm{\mathsf A_{\mathscr S}(r)}
 +2\norm{\mathsf A_{\mathscr F}(r)}
 \bigr)\norm{\mathsf Q_{\mathscr F}(r)}
 \le
 \frac{3C_{\rm rem}}{(a+r)^2}
 +\frac{4C_0K_0}{c_*}
  \sum_{\beta\in\mathscr F}
  \frac{k_\beta}{\varepsilon_\beta(a+r)^2}.
\end{align*}
Since
$$
 \int_u^v\frac{dr}{(a+r)^2}
 =\frac1{a+u}-\frac1{a+v}
 \le\frac1{a+u},
$$
we obtain
\begin{align*}
 \int_u^v\norm{\widetilde{\mathsf R}_{\mathscr F}(r)}\,dr
 &\le
 \frac{3C_{\rm rem}}{a+u}
 +\frac{4C_0K_0}{c_*}
  \sum_{\beta\in\mathscr F}
  \frac{k_\beta}{\varepsilon_\beta(a+u)}
 \le
 3C_{\rm rem}+\frac{4C_0K_0^2}{c_*K}
 =:C_1.
\end{align*}
The last inequality uses $a\ge1$ and
$\varepsilon_\beta(a+u)\ge K$ for $\beta\in\mathscr F$.

The matrix $\mathsf A_{\mathscr S}(r)$ is Hermitian, and its
quadratic form is bounded above by
$$
 \ip{\mathsf A_{\mathscr S}(r)z}{z}
 \le
 \sum_{\beta\in\mathscr S}
 \frac{k_\beta}{r+c_0a}\norm{z}^2,
 \qquad z\in\mathcal H.
$$
Consequently, for almost every $r\in[u,v]$,
\begin{align*}
 \frac12\frac d{dr}\norm{V(r)}^2
 &=
 \operatorname{Re}
 \ip{(\mathsf A_{\mathscr S}(r)
       +\widetilde{\mathsf R}_{\mathscr F}(r))V(r)}{V(r)}
 \le
 \left(
 \sum_{\beta\in\mathscr S}\frac{k_\beta}{r+c_0a}
 +\norm{\widetilde{\mathsf R}_{\mathscr F}(r)}
 \right)\norm{V(r)}^2.
\end{align*}
Applying Gronwall's inequality to $\norm{V(r)}^2$ and taking
square roots gives
\begin{align*}
 \norm{V(v)}
 &\le
 \exp\left(
 \int_u^v\norm{\widetilde{\mathsf R}_{\mathscr F}(r)}\,dr
 \right)
 \prod_{\beta\in\mathscr S}
 \exp\left(k_\beta\int_u^v\frac{dr}{r+c_0a}\right)
 \norm{V(u)}
  \le
 e^{C_1}
 \prod_{\beta\in\mathscr S}
 \left(\frac{v+c_0a}{u+c_0a}\right)^{k_\beta}
 \norm{V(u)}.
\end{align*}

Put
$$
 C_{c_0}=\frac{\max\{1,c_0\}}{\min\{1,c_0\}}.
$$
Recall that $q_\varepsilon(u)=\frac{u}{1+\varepsilon u}$. For $\beta\in\mathscr S$, the inequality
$\varepsilon_\beta(a+v)\le K$ gives
\begin{align*}
 \frac{v+c_0a}{u+c_0a}
 &\le C_{c_0}\frac{a+v}{a+u}
 =
 C_{c_0}
 \frac{q_{\varepsilon_\beta}(a+v)}
 {q_{\varepsilon_\beta}(a+u)}
 \frac{1+\varepsilon_\beta(a+v)}
 {1+\varepsilon_\beta(a+u)}
 \le
 C_{c_0}(1+K)
 \frac{q_{\varepsilon_\beta}(a+v)}
 {q_{\varepsilon_\beta}(a+u)}.
\end{align*}
For $\beta\in\mathscr F$, the ratio
$q_{\varepsilon_\beta}(a+v)/
 q_{\varepsilon_\beta}(a+u)$ is at least $1$.
Since $\sum_{\beta\in\mathscr S}k_\beta\le K_0$, it follows that
\begin{align*}
 \norm{V(v)}
 &\le
 e^{C_1}
 \prod_{\beta\in\mathscr S}
 \bigl(C_{c_0}(1+K)\bigr)^{k_\beta}
  \left(
 \frac{q_{\varepsilon_\beta}(a+v)}
 {q_{\varepsilon_\beta}(a+u)}
 \right)^{k_\beta}
 \norm{V(u)}\\
 &\le
 e^{C_1}\bigl(C_{c_0}(1+K)\bigr)^{K_0}
 \prod_{\beta\in\mathscr B}
 \left(
 \frac{q_{\varepsilon_\beta}(a+v)}
 {q_{\varepsilon_\beta}(a+u)}
 \right)^{k_\beta}\norm{V(u)}.
\end{align*}
Returning to $Y$ at both endpoints gives
$$
 \norm{Y(v)}
 \le
 D\prod_{\beta\in\mathscr B}
 \left(
 \frac{q_{\varepsilon_\beta}(a+v)}
 {q_{\varepsilon_\beta}(a+u)}
 \right)^{k_\beta}\norm{Y(u)},
 \qquad
 D=3e^{C_1}\bigl(C_{c_0}(1+K)\bigr)^{K_0}.
$$
Indeed,
$\norm{\mathsf M(v)}\norm{\mathsf M(u)^{-1}}\le3$.
All estimates were obtained on the closed interval $[u,v]$,
so this bound also holds when an endpoint equals some
$\tau_\beta$.

Finally, apply the preceding estimate on each $[u_j,u_{j+1}]$ to the
same solution $Y(r)=\mathscr U(r,s)y$, where $y\in\mathcal H$.
For every $\beta\in\mathscr B$,
$$
 \prod_{j=0}^{m-1}
 \frac{q_{\varepsilon_\beta}(a+u_{j+1})}
 {q_{\varepsilon_\beta}(a+u_j)}
 =
 \frac{q_{\varepsilon_\beta}(a+t)}
 {q_{\varepsilon_\beta}(a+s)}.
$$
Thus
$$
 \norm{\mathscr U(t,s)y}
 \le
 D^m
 \prod_{\beta\in\mathscr B}
 \left(
 \frac{q_{\varepsilon_\beta}(a+t)}
 {q_{\varepsilon_\beta}(a+s)}
 \right)^{k_\beta}\norm{y}.
$$
Taking the supremum over $\norm{y}=1$ proves
\eqref{eq:simultaneous-evolution} with
$$
 C=D^{|\mathscr B|+1}.
$$
The case $s=t$ follows from $\mathscr U(s,s)=\mathrm{Id}$ and
$D\ge1$. The formulas for $c_*$, $C_0$, $K$, $C_1$, $C_{c_0}$,
and $D$ give precisely the stated dependence of $C$.
\end{proof}

\subsection{Proof for an arbitrary root system}

We first estimate the kernel on a compact set.
We then move along rays in the chamber, starting each ray at a
point where a bound is already known.
The equations along a ray involve all the functions
$E_\kappa(ig\lambda,x)$, $g\in G$, so we estimate them together.
We multiply these functions by a weight and exponential factors
and apply Proposition~\ref{prop:simultaneous} to the resulting
matrix equation.
This gives a bound at the endpoint of the ray from the bound
at its starting point.
Repeating this step gives the estimate throughout the chamber.
Multiplication by $w_\kappa(\lambda)^{1/2}$ makes the bound uniform
as $\lambda$ approaches the reflecting hyperplanes.
We then differentiate the equations and use the same argument
to estimate the first spatial derivatives.

\begin{proof}[Proof of Proposition~\ref{thm:joint}]

Fix a compact annulus $\mathcal A$ and put
$$
 M_{\mathcal A}=\sup_{\xi\in\mathcal A}\norm{\xi}.
$$
Let $\lambda\in\mathcal A\setminus\calW$.  Thus
$\ip{\beta}{\lambda}\ne0$ for every $\beta\in R$.  

\vskip 0.8cm
{\bf Step 1.}
We first assume that $R$ spans $\bbR^N$.

Here we use the subdivision of a chamber in \cite[Section~4]{Langen2026}
and the differential equation in \cite[Proposition~3.2]{Langen2026}.
The constants in the resulting kernel estimates depend only on $R$, $\kappa$, and the upper bound $M_{\mathcal A}$ for $\norm{\lambda}$.
We will use this fact to treat the case in which $R$ does not span $\bbR^N$.

\vskip0.5cm
{\bf Step 1.1.}

Let $\alpha_1,\ldots,\alpha_N$ be the simple roots corresponding to $R_+$.
They form a basis of $\bbR^N$.
For $1\le j\le N$, define the linear function
$$
 \rho_j(x)=\ip{\alpha_j}{x}.
$$
The positive chamber and its closure are
$$
 \mathcal C_+=\{x:\rho_j(x)>0\text{ for all }j\},
 \qquad
 \overline{\mathcal C_+}
 =\{x:\rho_j(x)\ge0\text{ for all }j\}.
$$
Let $\mathfrak S_N$ denote the set of all permutations of
$\{1,\ldots,N\}$.  For each $\pi\in\mathfrak S_N$, define
$$
 S_\pi
 =
 \{x:\rho_{\pi(1)}(x)\ge\cdots\ge\rho_{\pi(N)}(x)\ge0\}.
$$
Every list of $N$ non-negative numbers can be put in decreasing order.
Consequently,
$$
 \overline{\mathcal C_+}=\bigcup_{\pi\in\mathfrak S_N}S_\pi.
$$
Here $\pi$ only specifies an ordering of the simple-root coordinates.
The sets $S_\pi$ may overlap when some coordinates are equal.
Since there are only finitely many such sets, it suffices to prove
the estimates on each of them and take the largest constant.

Fix one of these sets and relabel the simple roots so that it is
$$
 S=\{x:\rho_1(x)\ge\cdots\ge\rho_N(x)\ge0\}.
$$
Let $\varpi_1,\ldots,\varpi_N$ be the basis dual to the simple roots
with respect to the Euclidean inner product:
$$
 \ip{\alpha_k}{\varpi_j}
 =
 \begin{cases}
  1,&k=j,\\
  0,&k\ne j.
 \end{cases}
$$
This basis exists and is unique because the simple roots form a basis.
In particular,
$$
 x=\sum_{j=1}^N\rho_j(x)\varpi_j.
$$
For $1\le i\le N$, define
$$
 H_i=\sum_{j=1}^i\varpi_j.
$$
Thus $\rho_j(H_i)$ means the number $\ip{\alpha_j}{H_i}$, and
$$
 \rho_j(H_i)
 =
 \begin{cases}
  1,&1\le j\le i,\\
  0,&i<j\le N.
 \end{cases}
$$
Adding $rH_i$ to a point increases its first $i$ coordinates
$\rho_j$ by $r$ and leaves the remaining coordinates unchanged.

Define
$
 \mathfrak a^0
 =
 \{x\in S:\rho_j(x)\le1\text{ for }1\le j\le N\},
$
and, for $1\le i\le N$, define
$
 \mathfrak a^i
 =
 \{x\in S:\rho_1(x)=\cdots=\rho_i(x)\ge1\}.
$

Also put $\mathfrak a^{N+1}=\varnothing$ and $\rho_{N+1}\equiv0$.
The set $\mathfrak a^0$ is compact, since it is closed and
$$
 \norm{x}
 \le\sum_{j=1}^N\norm{\varpi_j}
 \qquad (x\in\mathfrak a^0).
$$
Moreover,
$$
 S=\mathfrak a^0\cup\mathfrak a^1,
 \qquad
 \mathfrak a^{i+1}\subset\mathfrak a^i
 \quad (1\le i<N).
$$
We will proceed from $i=N$ down to $i=1$.
At each step, the estimate on $\mathfrak a^i$ will follow from
the estimates on $\mathfrak a^{i+1}$ and the compact set $\mathfrak a^0$.

To explain this step, fix $x\in\mathfrak a^i$ and set
$
 a=\max\{1,\rho_{i+1}(x)\},
 \
 t=\rho_i(x)-a,
 \
 x_0=x-tH_i.
$

Since $\rho_i(x)\ge\rho_{i+1}(x)$ and $\rho_i(x)\ge1$, we have $t\ge0$.
The coordinates of $x_0$ are
$$
 \rho_j(x_0)
 =
 \begin{cases}
  a,&j\le i,\\
  \rho_j(x),&j>i.
 \end{cases}
$$
If $a=1$, all these coordinates belong to $[0,1]$, so
$x_0\in\mathfrak a^0$.
If $a>1$, then $i<N$ and
$\rho_{i+1}(x_0)=\rho_{i+1}(x)=a$, so
$x_0\in\mathfrak a^{i+1}$.
For $r\ge0$, write
$$
 x_r=x_0+rH_i.
$$
The first $i$ coordinates of $x_r$ equal $\rho_i(x_r)=a+r$, and the other
coordinates are at most $a$.
Hence $x_r\in\mathfrak a^i$ for every $r\ge0$.
The point originally chosen is $x_t=x$, and $\rho_i(x_t)=a+t$.
For $i=N$, the convention $\rho_{N+1}=0$ gives $a=1$,
so the first induction step always starts in $\mathfrak a^0$.

Every positive root has an expansion
$$
 \alpha=\sum_{j=1}^Nc_j\alpha_j,
 \qquad c_j\ge0.
$$
For such a root, we have 
$
 \ip{\alpha}{H_i}=\sum_{j=1}^ic_j,
$
which is zero exactly when $c_1=\cdots=c_i=0$.
Therefore, we can define 
$
 I=\{\alpha_{i+1},\ldots,\alpha_N\}
 $ and $
 R_I=\{\alpha\in R:\ip{\alpha}{H_i}=0\}=R\cap\operatorname{span}I$.
Note that $R_I$ consists precisely of the roots whose values do not change
along the line $x_0+rH_i$.

For $\alpha\in R_+\setminus R_I$, define
\begin{align}\label{def:b-d}
 b_\alpha=\ip{\alpha}{H_i}=\sum_{j=1}^ic_j>0,
 \qquad
 d_\alpha=\frac{\ip{\alpha}{x_0}}{b_\alpha}.
\end{align}
Using the coordinates of $x_0$, we obtain
 $
 \ip{\alpha}{x_r}=b_\alpha(r+d_\alpha)
$
and
$
 d_\alpha
 =
 a+\frac{\sum_{j>i}c_j\rho_j(x_0)}{b_\alpha}
 \ge a.
 $
 
The numbers $b_\alpha$ depends only
on the ordering of the simple roots, the index $i$, and the root
$\alpha$.
There are finitely many such choices, and $b_\alpha>0$ whenever
$\alpha\notin R_I$.
We may therefore choose constants $b_0>0$ depending only on $R$ such that
$$
 b_0\le b_\alpha$$
for all these choices.
It follows that
\begin{equation}\label{eq:ray-geometry}
 \begin{gathered}
  b_0(a+r)\le\ip{\alpha}{x_r},
  \qquad
  \alpha\in R_+\setminus R_I,\quad r\ge0.
 \end{gathered}
\end{equation}
In particular, none of these inner products vanishes on the ray.

Define
$$
 W(u)
 =
 \prod_{\alpha\in R_+\setminus R_I}
 \abs{\ip{\alpha}{u}}^{\kappa(\alpha)}.$$
Then $W$ is positive and smooth on the open set
\begin{align}\label{a reg}
 \mathfrak a_{\mathrm{reg}}^I
 =
 \{u:\ip{\alpha}{u}\ne0
       \text{ for every }\alpha\in R\setminus R_I\},
\end{align}
which contains the ray.
We now use the weight $W$ to derive a matrix differential
equation along the ray $x_r=x_0+rH_i$, $r\ge0$.
For $g\in G$, set
$$
 F_g(u)=E_\kappa(ig\lambda,u).
$$
The components of the unknown vector will be
$$
 W(x_r)e^{-i\ip{x_r}{g\lambda}}F_g(x_r),
 \qquad g\in G.
$$
The derivatives of $W$ and of the exponential will cancel
the terms multiplying $F_g$ itself.
The weight comparison above will then allow us to recover
estimates with the full spatial weight
$\Omega_\kappa(x_r)^{1/2}$.

To derive this equation, let $\partial_H$ denote the ordinary
directional derivative for a real vector $H$, and recall
the corresponding Dunkl operator:
$$
 T_Hf(u)
 =
 \partial_Hf(u)
 +
 \sum_{\alpha\in R_+}
 \kappa(\alpha)\ip{\alpha}{H}
 \frac{f(u)-f(\sigma_\alpha u)}{\ip{\alpha}{u}}.
$$

For smooth $f$, each fraction is understood by its smooth extension
on the reflecting hyperplane.
The eigenfunction identity and covariance of the Dunkl kernel give
$$
 T_{H_i}F_g=i\ip{H_i}{g\lambda}F_g,
 \qquad
 F_g(\sigma_\alpha u)=F_{\sigma_\alpha g}(u).
$$
For $\alpha\in R_+\setminus R_I$, write
$$
 c_\alpha(u)
 =
 \kappa(\alpha)\frac{\ip{\alpha}{H_i}}{\ip{\alpha}{u}}.
$$
The definition of $T_{H_i}$ now yields
\begin{align*}
 \partial_{H_i}F_g(u)
 &=
 i\ip{H_i}{g\lambda}F_g(u)
 -
 \sum_{\alpha\in R_+\setminus R_I}
 c_\alpha(u)\bigl(F_g(u)-F_{\sigma_\alpha g}(u)\bigr).
\end{align*}
The preceding identity holds for every
$u$ in the open set $\mathfrak a_{\mathrm{reg}}^I$ as defined in \eqref{a reg},
including points on the reflecting hyperplanes associated with $R_I$.
Indeed, the roots in $R_I$ do not contribute to $T_{H_i}$, since
$\ip{\alpha}{H_i}=0$ for these roots.
For every remaining root $\alpha\in R_+\setminus R_I$,
the denominator $\ip{\alpha}{u}$ is nonzero on
$\mathfrak a_{\mathrm{reg}}^I$ by definition.
Thus both sides of the equation are smooth on this open set, so we may differentiate
the equation in any spatial direction.

Since
$$
 \partial_{H_i}W(u)
 =
 W(u)\sum_{\alpha\in R_+\setminus R_I}c_\alpha(u),
$$
define
$$
 \Phi_g(u)=W(u)e^{-i\ip{u}{g\lambda}}F_g(u),
 \qquad
 \Phi(r)=\bigl(\Phi_g(x_r)\bigr)_{g\in G}\in\ell^2(G).
$$
Differentiating $\Phi_g$ in the direction $H_i$ cancels both the term
$i\ip{H_i}{g\lambda}F_g$ and the term
$-\sum_\alpha c_\alpha F_g$.
The remaining terms are
$$
 \partial_{H_i}\Phi_g(u)
 =
 \sum_{\alpha\in R_+\setminus R_I}
 c_\alpha(u)
 e^{-i\ip{u}{g\lambda-\sigma_\alpha g\lambda}}
 \Phi_{\sigma_\alpha g}(u).
$$
Under our normalization $\norm{\alpha}^2=2$,
$$
 g\lambda-\sigma_\alpha g\lambda
 =\ip{\alpha}{g\lambda}\alpha.
$$
It follows that $\Phi$ satisfies
$$
 \Phi'(r)=\mathsf A(r)\Phi(r),
$$
where
\begin{equation}\label{eq:langen-entry}
 \mathsf A_{g,h}(r)
 =
 \begin{cases}
 \displaystyle
 \kappa(\alpha)\frac{\ip{\alpha}{H_i}}{\ip{\alpha}{x_r}}
 e^{-i\ip{\alpha}{x_r}\ip{\alpha}{g\lambda}},
 &h=\sigma_\alpha g,\quad \alpha\in R_+\setminus R_I,\\[2mm]
 0,&\text{otherwise}.
 \end{cases}
\end{equation}
This is the equation of \cite[Proposition~3.2]{Langen2026}
for the present choice of $H_i$ and spectral parameter $i\lambda$.

\vskip0.5cm
{\bf Step 1.2.}

For $\beta\in R_+$, set
$$
 \varepsilon_\beta=\abs{\ip{\beta}{\lambda}},
 \qquad
 \mathfrak q_\lambda(u)
 =
 \prod_{\beta\in R_+}
 \left(\frac{u}{1+\varepsilon_\beta u}\right)^{\kappa(\beta)},
 \qquad u>0.
$$

To apply Proposition~\ref{prop:simultaneous}, we group the entries of
$\mathsf A$ according to $\varepsilon_\beta$.
For a root $\gamma$, denote the unique positive root in
$\{\gamma,-\gamma\}$ as $\gamma^+$.
Given $g\in G$ and $\beta\in R_+$, put
$$
 \alpha_{g,\,\beta}=(g\beta)^+
 =\eta_{g,\,\beta}g\beta,
 \qquad
 \eta_{g,\,\beta}\in\{-1,1\}.
$$
Also define
$$
 \widetilde\eta_{g,\,\beta}
 =
 \eta_{g,\,\beta}\operatorname{sgn}\ip{\beta}{\lambda}
 \in\{-1,1\}.
$$
Then $\alpha_{g,\,\beta}\in R_+$, and
\begin{align}\label{eq:g-beta-equality}
 \sigma_{\alpha_{g,\,\beta}}g=g\sigma_\beta,
 \qquad
 \ip{\alpha_{g,\,\beta}}{g\lambda}
 =\widetilde\eta_{g,\,\beta}\varepsilon_\beta,
 \qquad
 \kappa(\alpha_{g,\,\beta})=\kappa(\beta).
\end{align}
If $\alpha_{g,\,\beta}\notin R_I$, set
$b_{g,\,\beta}=b_{\alpha_{g,\,\beta}}$ and 
$d_{g,\,\beta}=d_{\alpha_{g,\,\beta}}$ as defined in \eqref{def:b-d}, 
and define
$$
 (\mathsf A_\beta(r))_{g,g\sigma_\beta}
 =
 \frac{\kappa(\beta)}{r+d_{g,\,\beta}}
 e^{-i\widetilde\eta_{g,\,\beta}\varepsilon_\beta
 b_{g,\,\beta}(r+d_{g,\,\beta})}.
$$
Using \eqref{eq:g-beta-equality} and $b_\alpha(r+d_\alpha)=\ip{\alpha}{x_r}$, we can show that
$$
\mathsf A_{g,\sigma_{\alpha_{g,\,\beta}} g}(r)
=
\kappa(\alpha_{g,\,\beta})\frac{\ip{\alpha_{g,\,\beta}}{H_i}}{\ip{\alpha}{x_r}}
 e^{-i\ip{\alpha_{g,\,\beta}}{g\lambda}\ip{\alpha_{g,\,\beta}}{x_r}}
 =(\mathsf A_\beta(r))_{g,g\sigma_\beta}.
$$
If $\alpha_{g,\,\beta}\in R_I$, let
$$
 (\mathsf A_\beta(r))_{g,g\sigma_\beta}
 =0
 =\mathsf A_{g,\sigma_{\alpha_{g,\,\beta}} g}(r).
 $$
Let all other entries of $\mathsf A_\beta$ be zero. Consequently, 
each nonzero entry in \eqref{eq:langen-entry} belongs to exactly one of
these matrices, with $\beta=(g^{-1}\alpha)^+$.
Thus
$$
 \mathsf A(r)=\sum_{\beta\in R_+}\mathsf A_\beta(r).
$$

For a fixed $\beta$, the map $g\mapsto g\sigma_\beta$ is an involution
without fixed points.  It divides $G$ into pairs $\{g,g\sigma_\beta\}$.
If $g'=g\sigma_\beta$, then $g'\beta=-g\beta$, so
$$
 \alpha_{g',\, \beta}=\alpha_{g,\,\beta},
 \qquad
 b_{g',\,\beta}=b_{g,\, \beta},
 \qquad
 d_{g',\,\beta}=d_{g,\,\beta},
 \qquad
 \widetilde\eta_{g',\,\beta}=-\widetilde\eta_{g,\,\beta}.
$$
Thus for $\alpha_{g',\,\beta}\notin R_I$ we have
$$
(\mathsf A_\beta(r))_{g',g'\sigma_\beta}
 =
 \frac{\kappa(\beta)}{r+d_{g,\,\beta}}
 e^{i\widetilde\eta_{g,\,\beta}\varepsilon_\beta
 b_{g,\,\beta}(r+d_{g,\,\beta})}.
$$
Order the coordinate vectors of $\ell^2(G)$ by these pairs.
In this order, every nonzero block of $\mathsf A_\beta(r)$ has the form
$$
 \frac{\kappa(\beta)}{r+d_{g,\,\beta}}
 \begin{pmatrix}
  0&
  e^{-i\widetilde\eta_{g,\,\beta}\varepsilon_\beta
       b_{g,\,\beta}(r+d_{g,\,\beta})}\\
  e^{i\widetilde\eta_{g,\,\beta}\varepsilon_\beta
       b_{g,\,\beta}(r+d_{g,\,\beta})}&0
 \end{pmatrix}.
$$
In particular, it is Hermitian.
The remaining pairs give zero blocks.
The pairing may depend on $\beta$, but it is independent of $r$.
Moreover, $d_{g,\,\beta}\ge a$ and $b_{g,\,\beta}\ge b_0$.
These are precisely the block assumptions in
Proposition~\ref{prop:simultaneous}, with
$$
 \mathscr B=R_+,\quad
 \mathcal H=\ell^2(G),\quad
 k_\beta=\kappa(\beta),\quad
 c_0=1,
 \quad \mathsf R=0.
$$
We may take $C_{\rm rem}=1$, since the remainder is zero.

Let $\mathscr U_{\rm ray}(r,s)$ denote the solution matrix for
$Z'=\mathsf A Z$, normalized by $\mathscr U_{\rm ray}(s,s)=\mathrm{Id}$.
The proposition gives
\begin{equation}\label{eq:ray-evolution}
 \norm{\mathscr U_{\rm ray}(r,s)}
 \le
 C_e\,\frac{\mathfrak q_\lambda(a+r)}
              {\mathfrak q_\lambda(a+s)},
 \qquad 0\le s\le r.
\end{equation}
The constant $C_e$ depends only on $R$ and $\kappa$.
It is independent of the ordering of the simple roots and of
$i$, $a$, $x_0$, and $\lambda$.

In particular,
\begin{align}\label{eq:ray-evolution-Phi}
 \norm{\Phi(t)}
 \le
 C_e\,\frac{\mathfrak q_\lambda(a+t)}
              {\mathfrak q_\lambda(a)}
 \norm{\Phi(0)}.
\end{align}

\vskip0.5cm
{\bf Step 1.3.}

We now use this estimate to prove the bound for $E_\kappa$.

We claim that constants $C_i$ can be chosen so that
\begin{align}\label{claim for v=0}
 \max_{g\in G}
 \Omega_\kappa(x)^{1/2}\abs{E_\kappa(ig\lambda,x)}
 \le
 C_i\,\frac{\mathfrak q_\lambda(\rho_i(x))}
              {\mathfrak q_\lambda(1)},
 \qquad x\in\mathfrak a^i,\quad 1\le i\le N.
\end{align}

Indeed, we begin with uniform bounds on the compact set $\mathfrak a^0$,
and then argue from $i=N$ down to $i=1$.
For each $x\in\mathfrak a^i$, the ray constructed above starts
at a point $x_0$ in $\mathfrak a^0$ or $\mathfrak a^{i+1}$,
where a bound is already available.

We first introduce a weight comparison here.
Recall that
$$
 W(u)
 =
 \prod_{\alpha\in R_+\setminus R_I}
 \abs{\ip{\alpha}{u}}^{\kappa(\alpha)},
 \qquad
\Omega_\kappa(u)
 =
 \prod_{\alpha\in R_+}
 \bigl(1+\abs{\ip{\alpha}{u}}\bigr)^{2\kappa(\alpha)}.
$$
Let
$$
 \Omega_I(u)
 =
 \prod_{\alpha\in R_+\cap R_I}
 \bigl(1+\abs{\ip{\alpha}{u}}\bigr)^{2\kappa(\alpha)}.
$$
For $\alpha\in R_I$, we have
$\ip{\alpha}{x_r}=\ip{\alpha}{x_0}$, and hence
$
 \Omega_I(x_r)=\Omega_I(x_0).
$
Consequently, for every $r\ge0$,
\begin{align}\label{omega r}
 1\le \frac{\Omega_\kappa(x_r)^{1/2}}
 {W(x_r)\Omega_I(x_0)^{1/2}} &=
 \prod_{\alpha\in R_+\setminus R_I}
 \left(1+\frac{1}{\ip{\alpha}{x_r}}\right)^{\kappa(\alpha)}
 \le
 (1+b_0^{-1})^{\sum_{\alpha\in R_+}\kappa(\alpha)}
 =:C_w.
\end{align}
Here we used \eqref{eq:ray-geometry} and $a+r\ge1$.

Then consider the compact set $\mathfrak a^0$.
The Dunkl kernel and its first derivatives are continuous jointly
in the two variables.
The union of the sets $\mathfrak a^0$ over all the finitely many
orderings is compact, and so is
$\{\zeta\in\bbR^N:\norm{\zeta}\le M_{\mathcal A}\}$.
It follows that there is a constant $B_{\mathcal A}\ge1$ such that
\begin{align}\label{eq:bound a0}
 \max_{g\in G}
 \Omega_\kappa(u)^{1/2}
 \abs{E_\kappa(ig\lambda,u)}
 \le B_{\mathcal A},
 \qquad u\in\mathfrak a^0.
\end{align}
Using the Cauchy--Schwarz inequality, we can also choose $B_{\mathcal A}$ large enough such that
\begin{align}\label{eq:partial bound a0}
 \max_{g\in G}
 \Omega_\kappa(u)^{1/2}
 \abs{\partial_vE_\kappa(ig\lambda,u)}
 \le
  B_{\mathcal A}\norm{v},
 \qquad u\in\mathfrak a^0,\quad v\in\bbR^N.
\end{align}

To prove \eqref{claim for v=0}, we put $C_{N+1}=0$ and proceed downwards in $i$.

For a fixed $x\in\mathfrak a^i$, use the construction of $x_0$ above and denote $x=x_t$.
If $a=1$, then $x_0\in\mathfrak a^0$, and we can applie \eqref{eq:bound a0} at $x_0$.
If $a>1$, then $x_0\in\mathfrak a^{i+1}$ and
$\rho_{i+1}(x_0)=a$, so the induction hypothesis applies there.
In both cases,
$$
 \max_{g\in G}
 \Omega_\kappa(x_0)^{1/2}|F_g(x_0)|
 \le
 (B_{\mathcal A}+C_{i+1})
 \frac{\mathfrak q_\lambda(a)}{\mathfrak q_\lambda(1)}.
$$
Therefore, using \eqref{omega r} we obtain that
\begin{align*}
 \norm{\Phi(0)}
 &=
 W(x_0)\left(\sum_{g\in G}|F_g(x_0)|^2\right)^{1/2}
 \le
 \sqrt{|G|}\,(B_{\mathcal A}+C_{i+1})
 \frac{\mathfrak q_\lambda(a)}{\mathfrak q_\lambda(1)}
 \Omega_I(x_0)^{-1/2}.
\end{align*}
Applying \eqref{eq:ray-evolution-Phi} cancels $\mathfrak q_\lambda(a)$:
\begin{align}\label{eq:Phi-q}
 \norm{\Phi(t)}
 \le
 C_e\sqrt{|G|}\,(B_{\mathcal A}+C_{i+1})
 \frac{\mathfrak q_\lambda(a+t)}{\mathfrak q_\lambda(1)}
 \Omega_I(x_0)^{-1/2}.
\end{align}
For each $g$, we have $|F_g(x_t)|=W(x_t)^{-1}|\Phi_g(x_t)|$.
The weight comparison \eqref{omega r} thus gives
$$
 \max_{g\in G}
 \Omega_\kappa(x_t)^{1/2}|F_g(x_t)|
 \le
 C_wC_e\sqrt{|G|}\,(B_{\mathcal A}+C_{i+1})
 \frac{\mathfrak q_\lambda(a+t)}{\mathfrak q_\lambda(1)}.
$$
Since $\rho_i(x_t)=a+t$, this proves the induction step and we obtain \eqref{claim for v=0}.

Now we start to prove \eqref{eq:joint} for $\nu=0$ in the spanning case.

For $u\ge1$, using $w_\kappa(\lambda)^{1/2}
=\prod_{\beta\in R_+}\varepsilon_\beta^{\kappa(\beta)}$ and
$\varepsilon_\beta\le\norm{\beta}M_{\mathcal A}$ we can obtain that
\begin{equation}\label{eq:product-compensation}
 \begin{aligned}
 w_\kappa(\lambda)^{1/2}
 \frac{\mathfrak q_\lambda(u)}{\mathfrak q_\lambda(1)}
 &=
 \prod_{\beta\in R_+}
 \left(
  (1+\varepsilon_\beta)
  \frac{\varepsilon_\beta u}{1+\varepsilon_\beta u}
 \right)^{\kappa(\beta)}
 \le
 \prod_{\beta\in R_+}
 (1+\norm{\beta}M_{\mathcal A})^{\kappa(\beta)}
 =:B'_{\mathcal A}.
 \end{aligned}
\end{equation}
It shows that the spectral weight $w_\kappa(\lambda)^{1/2}$ cancels the possible growth of
$\mathfrak q_\lambda(u)/\mathfrak q_\lambda(1)$. 

For $x\in\mathfrak a^1$, we have $\rho_1(x)\ge 1$, which allows us to combine \eqref{eq:product-compensation}
with \eqref{claim for v=0} for $i=1$.
For $x\in\mathfrak a^0$, we use \eqref{eq:bound a0}
 and
$w_\kappa(\lambda)^{1/2}\le B'_{\mathcal A}$.
Since $S=\mathfrak a^0\cup\mathfrak a^1$, this proves
$$
 \sup_{x\in S}\max_{g\in G}
 w_\kappa(\lambda)^{1/2}\Omega_\kappa(x)^{1/2}
 \abs{E_\kappa(ig\lambda,x)}
 \le C_{\mathcal A}.
$$
The same constant $C_{\mathcal A}$ works for every ordering of the simple roots. 
The estimate therefore holds on $\overline{\mathcal C_+}$.
Every point of $\bbR^N$ has the form $hx_+$ for some $h\in G$
and $x_+\in\overline{\mathcal C_+}$.
Covariance and the $G$-invariance of $\Omega_\kappa$ give
$$
 E_\kappa(i\lambda,hx_+)=E_\kappa(ih^{-1}\lambda,x_+),
 \qquad
 \Omega_\kappa(hx_+)=\Omega_\kappa(x_+).
$$
Taking $g=h^{-1}$ in the estimate on $\overline{\mathcal C_+}$ gives the required bound at $hx_+$. 

\vskip0.5cm
{\bf Step 1.4.}

We next prove the estimate for first derivatives.
Recall that
\begin{align*}
 \partial_{H_i}F_g(u)
 &=
 i\ip{H_i}{g\lambda}F_g(u)
 -
 \sum_{\alpha\in R_+\setminus R_I}
 c_\alpha(u)\bigl(F_g(u)-F_{\sigma_\alpha g}(u)\bigr),
 \qquad
 c_\alpha(u)
 =
 \kappa(\alpha)\frac{\ip{\alpha}{H_i}}{\ip{\alpha}{u}}.
\end{align*}

Fix a real vector $v\in\bbR^N$ and retain the notation for a ray
$x_r=x_0+rH_i$.
Differentiate the equation for $F_g$ in the direction $v$ on
$\mathfrak a_{\mathrm{reg}}^I$.
All its terms are smooth there, and
\begin{align*}
 \partial_{H_i}\partial_vF_g(u)
 &=
 i\ip{H_i}{g\lambda}\partial_vF_g(u)-
 \sum_{\alpha\in R_+\setminus R_I}
 c_\alpha(u)
 \bigl(\partial_vF_g(u)-\partial_vF_{\sigma_\alpha g}(u)\bigr)\\
 &\quad-
 \sum_{\alpha\in R_+\setminus R_I}
 \partial_vc_\alpha(u)
 \bigl(F_g(u)-F_{\sigma_\alpha g}(u)\bigr).
\end{align*}
The coefficient $i\ip{H_i}{g\lambda}$ is constant in $u$,
so differentiating it gives no additional term.

Define a vector of weighted derivatives by
$$
 (\Theta_v(r))_g
 =
 W(x_r)e^{-i\ip{x_r}{g\lambda}}\partial_vF_g(x_r),
 \qquad g\in G.
$$
Differentiating this expression in $r$, the derivatives of $W$
and of the exponential cancel the same two terms as before.
Writing $\Phi_g(r)$ for $\Phi_g(x_r)$, the resulting equation is
\begin{align*}
 (\Theta_v'(r))_g
 &=
 \sum_{\alpha\in R_+\setminus R_I}
 c_\alpha(x_r)
 e^{-i\ip{\alpha}{x_r}\ip{\alpha}{g\lambda}}
 (\Theta_v(r))_{\sigma_\alpha g}-
 \sum_{\alpha\in R_+\setminus R_I}
 \partial_vc_\alpha(x_r)
 \left(
  \Phi_g(r)
  -
  e^{-i\ip{\alpha}{x_r}\ip{\alpha}{g\lambda}}
  \Phi_{\sigma_\alpha g}(r)
 \right).
\end{align*}
We further write
\begin{align}\label{expression for Theta'}
 \Theta_v'(r)=\mathsf A(r)\Theta_v(r)+\mathsf R_v(r)\Phi(r),
 \qquad
 \mathsf R_v(r)
 =
 -\sum_{\alpha\in R_+\setminus R_I}
 \partial_vc_\alpha(x_r)
 \bigl(\mathrm{Id}-\mathsf P_\alpha(r)\bigr),
\end{align}
where $\mathsf P_\alpha(r)$ is the operator on $\ell^2(G)$ defined by
$$
 (\mathsf P_\alpha(r)z)_g
 =
 e^{-i\ip{\alpha}{x_r}\ip{\alpha}{g\lambda}}
 z_{\sigma_\alpha g}.
$$
The exponential has modulus one and $g\mapsto\sigma_\alpha g$
permutes $G$.  Therefore,
$$
 \norm{\mathsf P_\alpha(r)z}^2
 =
 \sum_{g\in G}|z_{\sigma_\alpha g}|^2
 =
 \norm{z}^2,
 \qquad
 \norm{\mathrm{Id}-\mathsf P_\alpha(r)}\le2.
$$
For
$$
\partial_vc_\alpha(u)
 =
 -\kappa(\alpha)
 \frac{\ip{\alpha}{H_i}\ip{\alpha}{v}}
      {\ip{\alpha}{u}^2},
$$
using $\ip{\alpha}{x_r}=b_\alpha(r+d_\alpha)$, we have
$$
 |\partial_vc_\alpha(x_r)|
 =
 \kappa(\alpha)
 \frac{|\ip{\alpha}{v}|}{b_\alpha(r+d_\alpha)^2}
 \le
 \frac{\kappa(\alpha)\norm{\alpha}}{b_0}
 \frac{\norm{v}}{(a+r)^2}.
$$
Consequently,
\begin{equation}\label{eq:jet-remainder}
 \norm{\mathsf R_v(r)}
 \le
 C_R\frac{\norm{v}}{(a+r)^2},
 \qquad
 C_R=\frac{2}{b_0}
 \sum_{\alpha\in R_+}\kappa(\alpha)\norm{\alpha}.
\end{equation}
This constant is independent of $\lambda$.
In particular, no inverse power of any $\varepsilon_\beta$ occurs
when we differentiate the kernel.

We claim that there are constants $D_i$ such that
\begin{align}\label{claim for v=1}
 \max_{g\in G}
 \Omega_\kappa(x)^{1/2}
 \abs{\partial_vE_\kappa(ig\lambda,x)}
 \le
 D_i\norm{v}
 \frac{\mathfrak q_\lambda(\rho_i(x))}
      {\mathfrak q_\lambda(1)},
 \qquad x\in\mathfrak a^i,\quad 1\le i\le N.
\end{align}
Similar to the proof of \eqref{claim for v=0}, we again put $D_{N+1}=0$ and argue from $i=N$ down to $i=1$.

Using the weight comparison \eqref{omega r}, at the initial point $x_0$ of the ray, either \eqref{eq:partial bound a0} or the induction hypothesis gives
$$
 \norm{\Theta_v(0)}
 \le
 \sqrt{|G|}\,(B_{\mathcal A}+D_{i+1})\norm{v}
 \frac{\mathfrak q_\lambda(a)}{\mathfrak q_\lambda(1)}
 \Omega_I(x_0)^{-1/2}.
$$
Also, \eqref{eq:Phi-q} gives
$$
 \norm{\Phi(r)}
 \le
 \sqrt{|G|}\,C_i
 \frac{\mathfrak q_\lambda(a+r)}{\mathfrak q_\lambda(1)}
 \Omega_I(x_0)^{-1/2}.
$$

By \eqref{expression for Theta'}, the variation-of-constants formula for the equation for $\Theta_v$ is
$$
 \Theta_v(t)
 =
 \mathscr U_{\rm ray}(t,0)\Theta_v(0)
 +
 \int_0^t
 \mathscr U_{\rm ray}(t,r)\mathsf R_v(r)\Phi(r)\,dr.
$$
Combining \eqref{eq:ray-evolution} and \eqref{eq:jet-remainder}
with the last two bounds yields
\begin{align*}
 \norm{\Theta_v(t)}
 &\le
 C_e\sqrt{|G|}\,(B_{\mathcal A}+D_{i+1})\norm{v}
 \frac{\mathfrak q_\lambda(a+t)}{\mathfrak q_\lambda(1)}
 \Omega_I(x_0)^{-1/2}\\
 &\quad+
 C_eC_R\sqrt{|G|}\,C_i\norm{v}\,
 \Omega_I(x_0)^{-1/2}
 \int_0^t
 \frac{\mathfrak q_\lambda(a+t)}{\mathfrak q_\lambda(a+r)}
 \frac{1}{(a+r)^2}
 \frac{\mathfrak q_\lambda(a+r)}{\mathfrak q_\lambda(1)}
 \,dr\\
 &\le
 C_e\sqrt{|G|}
 (B_{\mathcal A}+D_{i+1}+C_RC_i)\norm{v}
 \frac{\mathfrak q_\lambda(a+t)}{\mathfrak q_\lambda(1)}
 \Omega_I(x_0)^{-1/2}.
\end{align*}
In the last step, the factors $\mathfrak q_\lambda(a+r)$ cancel,
and $a\ge1$ gives
$$
 \int_0^t\frac{dr}{(a+r)^2}
 =
 \frac1a-\frac1{a+t}
 \le1.
$$

For each $g$, we have $|\partial_vF_g(x_t)|=W(x_t)^{-1}|(\Theta_v(x_t))_g|$.
Thus, multiplying by $\Omega_\kappa(x_t)^{1/2}W(x_t)^{-1}$
and using the weight comparison \eqref{omega r} again gives
\begin{align*}
 \max_{g\in G}
 \Omega_\kappa(x_t)^{1/2}|\partial_vF_g(x_t)|
 &\le
 C_wC_e\sqrt{|G|}
 (B_{\mathcal A}+D_{i+1}+C_RC_i)\norm{v}
 \frac{\mathfrak q_\lambda(a+t)}{\mathfrak q_\lambda(1)}.
\end{align*}
Since $\rho_i(x_t)=a+t$, this proves the induction step and we obtain \eqref{claim for v=1} with constants depending only on $R$, $\kappa$, and $M_{\mathcal A}$.

Apply \eqref{eq:product-compensation} on $\mathfrak a^1$ and
\eqref{eq:partial bound a0} on $\mathfrak a^0$.
Taking the maximum over all the sectors gives
$$
 w_\kappa(\lambda)^{1/2}\Omega_\kappa(x)^{1/2}
 \abs{\partial_vE_\kappa(ig\lambda,x)}
 \le C_{\mathcal A}\norm{v},
 \qquad
 x\in\overline{\mathcal C_+},\quad g\in G.
$$
For $x=hx_+$ with $h\in G$ and
$x_+\in\overline{\mathcal C_+}$, differentiate the covariance identity:
$$
 \left.\partial_vE_\kappa(i\lambda,u)\right|_{u=hx_+}
 =
 \left.\partial_{h^{-1}v}E_\kappa(ih^{-1}\lambda,u)
 \right|_{u=x_+}.
$$
The transformation $h$ is orthogonal, so
$\norm{h^{-1}v}=\norm{v}$.
The preceding bound therefore holds for every $x\in\bbR^N$.
Taking $v$ to be each coordinate vector proves \eqref{eq:joint}
for $|\nu|=1$ when $R$ spans $\bbR^N$.

\vskip0.8cm
{\bf Step 2.}

It remains to remove the spanning assumption.
If $R=\varnothing$, then $w_\kappa=\Omega_\kappa=1$ and
$E_\kappa(i\lambda,x)=e^{i\ip{\lambda}{x}}$.
The desired bounds follow immediately from
$|e^{i\ip{\lambda}{x}}|=1$ and
$|\ip{\lambda}{v}|\le M_{\mathcal A}\norm{v}$.
Suppose now that $R\ne\varnothing$, and let
$$
 V=\operatorname{span}R.
$$
Write the orthogonal decompositions
$$
 x=x_V+x_\perp,\qquad
 \lambda=\lambda_V+\lambda_\perp,\qquad
 v=v_V+v_\perp,
$$
where $x_V,\lambda_V,v_V\in V$,
and $x_\perp,\lambda_\perp,v_\perp\in V^\perp$.
For every $\alpha\in R$,
$$
 \ip{\alpha}{\lambda_V}=\ip{\alpha}{\lambda}\ne0,
 \qquad
 \norm{\lambda_V}\le M_{\mathcal A}.
$$
Thus $\lambda_V$ is regular for the root system in $V$.
Its norm may tend to zero as $\lambda$ varies in $\mathcal A$.
This causes no difficulty: the spanning proof used only
the upper bound on the spectral norm.
The compact-set estimates were taken over the whole spectral ball,
and \eqref{eq:product-compensation} used only
$\varepsilon_\beta\le\norm{\beta}M_{\mathcal A}$.
We may therefore apply that proof in $V$ uniformly to all the
projected parameters $\lambda_V$ under consideration.

Let $E_\kappa^V$ denote the Dunkl kernel for the root system in $V$.
The factorization in \cite[Remark~2.6]{Langen2026} is
\begin{align}\label{Ek}
 E_\kappa(i\lambda,x)
 =
 e^{i\ip{\lambda_\perp}{x_\perp}}
 E_\kappa^V(i\lambda_V,x_V).
\end{align}
Indeed, every reflection fixes $V^\perp$ pointwise, so the Dunkl
operators in directions in $V^\perp$ are ordinary derivatives.
The right-hand side of \eqref{Ek} satisfies the Dunkl eigenfunction equations
in both orthogonal subspaces and equals one at the origin.
The uniqueness of the Dunkl kernel proves the formula.
Differentiating it gives
\begin{align*}
 \partial_vE_\kappa(i\lambda,x)
 =
 e^{i\ip{\lambda_\perp}{x_\perp}}
 \left(
  \partial_{v_V}E_\kappa^V(i\lambda_V,x_V)
  +
  i\ip{\lambda_\perp}{v_\perp}E_\kappa^V(i\lambda_V,x_V)
 \right).
\end{align*}
The weights in the two spaces agree:
$$
 w_\kappa(\lambda)=w_\kappa^V(\lambda_V),
 \qquad
 \Omega_\kappa(x)=\Omega_\kappa^V(x_V),
$$
because every root belongs to $V$.
Moreover, 
$
|\ip{\lambda_\perp}{v_\perp}|\le \norm{\lambda_\perp}\norm{v_\perp}\le M_{\mathcal A}\norm{v_\perp}.
$
Thus, applying \eqref{eq:joint} in $V$ gives
$$
 w_\kappa(\lambda)^{1/2}\Omega_\kappa(x)^{1/2}
 \abs{\partial_vE_\kappa(i\lambda,x)}
 \le
 C_{\mathcal A}
 \bigl(\norm{v_V}+M_{\mathcal A}\norm{v_\perp}\bigr)
 \le
 C_{\mathcal A}(1+M_{\mathcal A})\norm{v}.
$$
Taking $v$ to be the coordinate vectors completes the proof of
\eqref{eq:joint}.
\end{proof}

\section{From the joint kernel estimate to the multiplier theorem}\label{sec:4}

We use the chamber-lifting argument from \cite{Chamber2026}.  The joint estimate
\eqref{eq:packet-joint} gives the scalar kernel bounds on a fixed annulus, and the argument for finite dyadic sums below
yields weak type $(1,1)$ and the chamber-pullback Hardy and BMO endpoints.  Strong $L^p$ bounds then
follow by interpolation and duality.

\subsection{Kernel estimates on a fixed annulus}

Choose symmetric, $G$-invariant compact annuli $\mathcal A_0$, $\mathcal A_1$, and $\mathcal A_2$ such that
$
 \supp\bigl(\psi(\norm{\cdot})\bigr)\subset\operatorname{int}\mathcal A_0,
 \,
 \mathcal A_0\subset\operatorname{int}\mathcal A_1,
 \,
 \mathcal A_1\subset\operatorname{int}\mathcal A_2.
$

We apply Proposition~\ref{thm:joint} to the annuli $\mathcal A_2$. Every reflected amplitude and the support cutoff used below
are supported in $\mathcal A_2$.  Since $\mathcal A_2=-\mathcal A_2$, the same annulus is used for both signs of the spectral
parameter.  Symmetry and covariance give
$$
 E_\kappa(-igy,\xi)=E_\kappa(-i\xi,gy),
 \qquad
 \partial_v^yE_\kappa(-igy,\xi)
 =
 \partial_{gv}^{(2)}E_\kappa(-i\xi,gy).
$$
Here $\partial^{(2)}$ acts in the second variable.  Since $\mathcal A_2=-\mathcal A_2$,
$\Omega_\kappa(gy)=\Omega_\kappa(y)$, and
$$
 \partial_{gv}^{(2)}
 =
 \sum_{k=1}^N(gv)_k\partial_k^{(2)},
 \qquad
 \sum_{k=1}^N\abs{(gv)_k}\le\sqrt N\norm{v},
$$
\eqref{eq:packet-joint} gives, for $\supp u\subset \mathcal A_2$,
\begin{align}\label{eq:packet-size-bound}
 \Omega_\kappa(y)^{1/2}
 \norm{u(\xi)E_\kappa(-igy,\xi)}_{L^2_\xi(d\omega)}
 &=
 \Omega_\kappa(gy)^{1/2}
 \norm{u(-\eta)E_\kappa(i\eta,gy)}_{L^2_\eta(d\omega)}
 \le
 C\norm{u}_{L^2(d\xi)}.
\end{align}
Moreover,
\begin{align}\label{eq:packet-jet-bound}
& \Omega_\kappa(y)^{1/2}
 \norm{u(\xi)\partial_v^yE_\kappa(-igy,\xi)}_{L^2_\xi(d\omega)} \notag\\
 \le\,&
 \sum_{k=1}^N\abs{(gv)_k}
 \Omega_\kappa(gy)^{1/2}
 \norm{u(-\eta)\partial_k^{(2)}E_\kappa(i\eta,gy)}_{L^2_\eta(d\omega)}
 \le
 C\norm{v}\norm{u}_{L^2(d\xi)}.
\end{align}
The constants are uniform in $g$, $y$, and $v$.

For $g\in G$, set
$
 h_g(\xi,y)=E_\kappa(-igy,\xi)
$
and, for $a\in C_c^\infty(\bbR^N)$,
$$
 \calK_g[a](x,y)
 =
 (c_\kappa^{\mathrm M})^{-1}
 \calF^{-1}\bigl(a(\xi)h_g(\xi,y)\bigr)(x)
 =
 (c_\kappa^{\mathrm M})^{-2}
 \int_{\bbR^N}
 a(\xi)E_\kappa(-igy,\xi)E_\kappa(i\xi,x)\,d\omega(\xi).
$$

\begin{lemma}\label{lem:covariant-moment}
Let $g\in G$ and let $a\in C_c^\infty(\bbR^N)$ satisfy $\supp a\subset \mathcal A_2$.  Then, for
$j=1,\ldots,N$,
\begin{equation}\label{eq:moment}
 \bigl(x_j-(gy)_j\bigr)\calK_g[a](x,y)
 =
 i\calK_g[\partial_ja](x,y)
 +
 i\sum_{\alpha\in R_+}
 \kappa(\alpha)\alpha_j
 \calK_{\sigma_\alpha g}[\delta_\alpha a](x,y).
\end{equation}
\end{lemma}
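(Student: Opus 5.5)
Since $a(\xi)h_g(\xi,y)$ is smooth and compactly supported in $\xi$, hence Schwartz, the plan is to use the identity $x_j\calF^{-1}q=\calF^{-1}(iT_jq)$ from the introduction, applied in the variable $\xi$ to $q(\xi)=a(\xi)h_g(\xi,y)$, and then to compute $T_j^\xi q$ with the Dunkl product rule. This gives
$$
 x_j\calK_g[a](x,y)=i(c_\kappa^{\mathrm M})^{-1}\calF^{-1}\bigl(T_j^\xi(a\,h_g(\cdot,y))\bigr)(x).
$$

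The next step is the product rule. For fixed $y$, write $f(\xi)=E_\kappa(-igy,\xi)$. The eigenfunction identity $T_j^\xi E_\kappa(z,\xi)=z_jE_\kappa(z,\xi)$ with $z=-igy$ gives
$$
 \partial_j f+\sum_{\alpha\in R_+}\kappa(\alpha)\alpha_j\frac{f-f\circ\sigma_\alpha}{\ip{\alpha}{\xi}}=-i(gy)_jf.
$$
Away from the walls, the Leibniz identity \eqref{eq:delta-Leibniz}, in the form $\delta_\alpha(af)=a\,\delta_\alpha f+(\delta_\alpha a)(f\circ\sigma_\alpha)$, yields
$$
 T_j(af)=(\partial_ja)f+aT_jf+\sum_{\alpha\in R_+}\kappa(\alpha)\alpha_j(\delta_\alpha a)(f\circ\sigma_\alpha).
$$
Both sides are smooth, by Lemma~\ref{lem:delta} and the smoothness of $f$, so the identity extends across $\calW$ by continuity. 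Substituting the eigenvalue relation gives
$$
 T_j(af)=(\partial_ja)f-i(gy)_jaf+\sum_{\alpha\in R_+}\kappa(\alpha)\alpha_j(\delta_\alpha a)(f\circ\sigma_\alpha).
$$

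It remains to identify $f\circ\sigma_\alpha$. The symmetry $E_\kappa(z,w)=E_\kappa(w,z)$ and the covariance $E_\kappa(z,hw)=E_\kappa(h^{-1}z,w)$ give
$$
 E_\kappa(-igy,\sigma_\alpha\xi)=E_\kappa(-i\sigma_\alpha gy,\xi)=h_{\sigma_\alpha g}(\xi,y),
$$
where we used $\sigma_\alpha^{-1}=\sigma_\alpha$. Multiplying the formula for $T_j(af)$ by $i$ and applying $(c_\kappa^{\mathrm M})^{-1}\calF^{-1}$ term by term, we obtain
$$
 x_j\calK_g[a]=i\calK_g[\partial_ja]+(gy)_j\calK_g[a]+i\sum_{\alpha\in R_+}\kappa(\alpha)\alpha_j\calK_{\sigma_\alpha g}[\delta_\alpha a].
$$
Moving $(gy)_j\calK_g[a]$ to the left-hand side gives \eqref{eq:moment}.

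The remaining checks are routine. The amplitudes $\partial_ja$ and $\delta_\alpha a$ lie in $C_c^\infty$ with support in $\mathcal A_2\cup\sigma_\alpha\mathcal A_2=\mathcal A_2$, by \eqref{eq:delta-support} and the $G$-invariance of $\mathcal A_2$, so the operators $\calK$ on the right are defined. We also need the identity $x_j\calF^{-1}q=\calF^{-1}(iT_jq)$ to hold for the $\xi$-variable with the normalization used in $\calK_g$. The main obstacle I expect is sign and convention bookkeeping: which kernel variable $T_j$ acts on, the sign of the eigenvalue $-igy$, and checking that the paper's Fourier convention makes the factor $i$ in the inverse-transform identity consistent with the sign of $(gy)_j$. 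I would verify this last point directly. The formula $\calF^{-1}q(x)=c^{-1}\int q(\xi)E_\kappa(i\xi,x)\,d\omega(\xi)$ together with $x_jE_\kappa(i\xi,x)=-iT_j^\xi E_\kappa(i\xi,x)$ and the skew-adjointness of $T_j$ on $L^2(d\omega)$ gives $x_j\calF^{-1}q=\calF^{-1}(iT_jq)$, as required.
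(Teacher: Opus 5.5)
Your proposal is correct and follows the paper's proof essentially line by line: you apply $x_j\calF^{-1}q=\calF^{-1}(iT_jq)$ to $q=a\,h_g(\cdot,y)$, expand $T_j^\xi(a h_g)$ with \eqref{eq:delta-Leibniz}, use the eigenvalue $-i(gy)_j$ and the covariance $h_g(\sigma_\alpha\xi,y)=h_{\sigma_\alpha g}(\xi,y)$, and then rearrange. Your direct verification of the inverse-transform identity via the skew-symmetry of $T_j$ and your support check are extra details that the paper takes for granted from the introduction and from Lemma~\ref{lem:delta}.
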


\begin{proof}
The eigenfunction identity and covariance give
$$
 T_j^\xi h_g=-i(gy)_jh_g,
 \qquad
 h_g(\sigma_\alpha\xi,y)
 =
 E_\kappa(-i\sigma_\alpha gy,\xi)
 =
 h_{\sigma_\alpha g}(\xi,y).
$$
By \eqref{eq:delta-Leibniz},
\begin{align*}
 T_j^\xi(ah_g)
 &=
 (\partial_ja)h_g+a\partial_jh_g
 +
 \sum_{\alpha\in R_+}
 \kappa(\alpha)\alpha_j
 \bigl(a\delta_\alpha h_g
 +\delta_\alpha a\,(h_g\circ\sigma_\alpha)\bigr)\\
 &=
 aT_j^\xi h_g+(\partial_ja)h_g
 +
 \sum_{\alpha\in R_+}
 \kappa(\alpha)\alpha_j
 \delta_\alpha a\,(h_g\circ\sigma_\alpha)\\
 &=
 -i(gy)_jah_g+(\partial_ja)h_g
 +
 \sum_{\alpha\in R_+}
 \kappa(\alpha)\alpha_j
 \delta_\alpha a\,(h_g\circ\sigma_\alpha).
\end{align*}
Consequently,
\begin{align*}
 x_j\calK_g[a]
 &=
 (c_\kappa^{\mathrm M})^{-1}
 \calF^{-1}\bigl(iT_j^\xi(ah_g)\bigr)
 =
 i\calK_g[\partial_ja]
 +(gy)_j\calK_g[a]
 +i\sum_{\alpha\in R_+}
 \kappa(\alpha)\alpha_j
 \calK_{\sigma_\alpha g}[\delta_\alpha a].
\end{align*}
Rearranging proves \eqref{eq:moment}.  
\end{proof}

\begin{lemma}\label{lem:packet-composition-expansion}
For every integer $M\ge0$, there exist a finite family $\mathscr D_M$ of compositions as defined in \eqref{def:composition} of order at
most $M$ and a constant $C_M$ such that, for $g\in G$, $x,y\in\bbR^N$, and
$a\in C_c^\infty(\bbR^N)$ with $\supp a\subset \mathcal A_2$,
\begin{equation}\label{eq:packet-composition-expansion}
 d(x,y)^M\abs{\calK_g[a](x,y)}
 \le
 C_M
 \sum_{\substack{h\in G\\\mathcal D\in\mathscr D_M}}
 \abs{\calK_h[\mathcal D a](x,y)},
\end{equation}
where $C_M$ and $\mathscr D_M$ depend only on $M$, $R$, and $\kappa$.
\end{lemma}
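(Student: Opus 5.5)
We prove \eqref{eq:packet-composition-expansion} by induction on $M$, iterating the moment identity \eqref{eq:moment} of Lemma~\ref{lem:covariant-moment}. For $M=0$ we take $\mathscr D_0=\{I\}$ and $C_0=1$, since $d(x,y)^0=1$ and the left side is $\abs{\calK_g[a]}$, which is one of the terms on the right with $h=g$. The central point is that $d(x,y)\le\norm{x-hy}$ for every $h\in G$. It therefore suffices to bound $\norm{x-hy}^M\abs{\calK_h[b]}$ for suitable centres $h$ and amplitudes $b$, and the centre may change at each step.

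For the induction step, assume the estimate holds at order $M$ with family $\mathscr D_M$. Multiply both sides by $d(x,y)$. On the right we get terms $d(x,y)\abs{\calK_h[\mathcal D a](x,y)}$ with $\mathcal D\in\mathscr D_M$. We bound each of them by $\norm{x-hy}\abs{\calK_h[\mathcal D a]}\le\sum_{j=1}^N\abs{(x_j-(hy)_j)\calK_h[\mathcal D a]}$. We then apply \eqref{eq:moment} with $g$ replaced by $h$ and $a$ replaced by $\mathcal D a$. This gives
\[
\abs{(x_j-(hy)_j)\calK_h[\mathcal D a]}\le\abs{\calK_h[\partial_j\mathcal D a]}+\sum_{\alpha\in R_+}\kappa(\alpha)\abs{\alpha_j}\abs{\calK_{\sigma_\alpha h}[\delta_\alpha\mathcal D a]}.
\]
Every term on the right has the form $\calK_{h'}[\mathcal D'a]$, where $h'\in\{h,\sigma_\alpha h\}\subset G$ and $\mathcal D'$ is $\mathcal D$ followed by one letter $\partial_j$ or $\delta_\alpha$. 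So $\mathcal D'$ is a composition as in \eqref{def:composition} of order at most $M+1$. We set
\[
\mathscr D_{M+1}=\mathscr D_M\cup\{A\circ\mathcal D:\mathcal D\in\mathscr D_M,\ A\in\{\partial_1,\ldots,\partial_N\}\cup\{\delta_\alpha:\alpha\in R_+\}\},
\]
which is finite, and we absorb the factors $\kappa(\alpha)\abs{\alpha_j}\le\kappa(\alpha)\sqrt2$ and the sums over $j$ and $\alpha$ into $C_{M+1}$. Summing over $h\in G$ completes the induction, since the enlarged sum over $h'\in G$ only adds a factor of at most $|G|$ to the constant.

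The one point needing care is that Lemma~\ref{lem:covariant-moment} must still apply to the new amplitudes $\mathcal D a$. It requires $\mathcal D a\in C_c^\infty$ with support in $\mathcal A_2$. Smoothness and compact support follow from Lemma~\ref{lem:delta}, specifically \eqref{eq:delta-support}. For the support condition, Proposition~\ref{prop:compositions} gives $\supp(\mathcal D_j a)\subset(\mathcal A_2)_G$, and $(\mathcal A_2)_G=\mathcal A_2$ because $\mathcal A_2$ was chosen $G$-invariant. This is the only place where that invariance is used, and it is essential: without it, the reflected amplitudes $\delta_\alpha a$ could leave the annulus, and then neither \eqref{eq:moment} nor the later kernel bounds on $\mathcal A_2$ would apply. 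Apart from this, the argument is bookkeeping, and $C_M$ and $\mathscr D_M$ depend only on $M$, $N$, $R$ and $\kappa$.
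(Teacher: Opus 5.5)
Your proposal is correct and is essentially the paper's proof. It uses the same recursive families $\mathscr D_{M+1}=\mathscr D_M\cup\{\partial_j\mathcal D,\delta_\alpha\mathcal D\}$, the bound $d(x,y)=d(x,hy)\le\sum_{j}\abs{x_j-(hy)_j}$ for every centre $h\in G$, the moment identity \eqref{eq:moment} applied to $\mathcal D a$ with the centre shifting to $\sigma_\alpha h$, and the support control $\supp(\mathcal D a)\subset(\mathcal A_2)_G=\mathcal A_2$ via Lemma~\ref{lem:delta} and Proposition~\ref{prop:compositions}.
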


\begin{proof}
Set
$
 \mathscr D_0=\{I\}
 $
 and 
 $
 \mathscr D_{m+1}
 =
 \mathscr D_m\cup
 \bigl\{\partial_j\mathcal D,\delta_\alpha\mathcal D:
 \mathcal D\in\mathscr D_m,\ 1\le j\le N,\ \alpha\in R_+\bigr\}.
$
Every composition in $\mathscr D_m$ has order at most $m$, and Lemma~\ref{lem:delta} and
Proposition~\ref{prop:compositions} give
$$
 \mathcal D a\in C_c^\infty(\bbR^N),
 \qquad
 \supp(\mathcal D a)\subset \mathcal A_2,
 \qquad
 \mathcal D\in\mathscr D_m.
$$
For $h\in G$,
$$
 d(x,y)=d(x,hy)\le\norm{x-hy}
 \le
 \sum_{j=1}^N\abs{x_j-(hy)_j}.
$$
The case $M=0$ follows from $\mathscr D_0=\{I\}$.  Assume \eqref{eq:packet-composition-expansion} at order
$m$.  For $\mathcal D\in\mathscr D_m$, Lemma~\ref{lem:covariant-moment} gives
\begin{align*}
 d(x,y)\abs{\calK_h[\mathcal D a](x,y)}
 &\le
 \sum_{j=1}^N
 \abs{\bigl(x_j-(hy)_j\bigr)\calK_h[\mathcal D a](x,y)}\\
 &\le
 \sum_{j=1}^N
 \abs{\calK_h[\partial_j\mathcal D a](x,y)}
+
 \sum_{j=1}^N\sum_{\alpha\in R_+}
 \kappa(\alpha)\abs{\alpha_j}
 \abs{\calK_{\sigma_\alpha h}[\delta_\alpha\mathcal D a](x,y)}.
\end{align*}
Multiplying the order-$m$ estimate by $d(x,y)$ and using that
$h\mapsto\sigma_\alpha h$ permutes $G$, we obtain
\begin{align*}
 d(x,y)^{m+1}\abs{\calK_g[a](x,y)}
 &\le
 C_m
 \sum_{\substack{h\in G\\\mathcal D\in\mathscr D_m}}
 d(x,y)\abs{\calK_h[\mathcal D a](x,y)}
 \le
 C_{m+1}
 \sum_{\substack{h\in G\\\mathfrak v\in\mathscr D_{m+1}}}
 \abs{\calK_h[\mathfrak v a](x,y)}.
\end{align*}
This proves the induction step.
\end{proof}

Let $v\in\bbR^N$ and set
$$
 h_{g,v}^{(1)}(\xi,y)=\partial_v^y h_g(\xi,y),
 \qquad
 \calK_{g,v}^{(1)}[a](x,y)
 =
 (c_\kappa^{\mathrm M})^{-1}
 \calF^{-1}\bigl(a(\xi)h_{g,v}^{(1)}(\xi,y)\bigr)(x).
$$
Differentiation in the direction $v$ gives
\begin{align*}
 T_j^\xi h_{g,v}^{(1)}
 &=
 \partial_v^y\bigl(-i(gy)_jh_g\bigr)
 =
 -i(gy)_jh_{g,v}^{(1)}-i(gv)_jh_g,
 \end{align*}
 and
 \begin{align*}
 h_{g,v}^{(1)}(\sigma_\alpha\xi,y)
 &=
 \partial_v^y h_{\sigma_\alpha g}(\xi,y)
 =
 h_{\sigma_\alpha g,v}^{(1)}(\xi,y).
\end{align*}
By \eqref{eq:delta-Leibniz},
\begin{align*}
 T_j^\xi(ah_{g,v}^{(1)})
 &=
 aT_j^\xi h_{g,v}^{(1)}
 +(\partial_ja)h_{g,v}^{(1)}
 +\sum_{\alpha\in R_+}
 \kappa(\alpha)\alpha_j
 \delta_\alpha a\,
 (h_{g,v}^{(1)}\circ\sigma_\alpha)\\
 &=
 -i(gy)_jah_{g,v}^{(1)}
 -i(gv)_jah_g
 +(\partial_ja)h_{g,v}^{(1)}
 +\sum_{\alpha\in R_+}
 \kappa(\alpha)\alpha_j
 \delta_\alpha a\,
 (h_{g,v}^{(1)}\circ\sigma_\alpha).
\end{align*}
Applying the inverse-coordinate identity and rearranging yields
\begin{align}\label{eq:jet-moment}
 \bigl(x_j-(gy)_j\bigr)\calK_{g,v}^{(1)}[a](x,y)
 =\,
 i\calK_{g,v}^{(1)}[\partial_ja](x,y)
 +
 i\sum_{\alpha\in R_+}
 \kappa(\alpha)\alpha_j
 \calK_{\sigma_\alpha g,v}^{(1)}[\delta_\alpha a](x,y)
 +(gv)_j\calK_g[a](x,y). 
\end{align}

\begin{proposition}\label{prop:unit}
Let $s\ge0$ and let $a\in C_c^\infty(\bbR^N)$ satisfy $\supp a\subset \mathcal A_1$.  Then
\begin{equation}\label{eq:unit-size}
 \Omega_\kappa(y)^{1/2}
 \norm{(1+d(\cdot,y))^s\calK_g[a](\cdot,y)}_{L^2(d\omega)}
 \le C_s\norm{a}_{H^s}.
\end{equation}
Moreover,
\begin{equation}\label{eq:unit-jet}
 \Omega_\kappa(y)^{1/2}
 \norm{(1+d(\cdot,y))^s\calK_{g,v}^{(1)}[a](\cdot,y)}_{L^2(d\omega)}
 \le C_s\norm{v}\norm{a}_{H^s}.
\end{equation}
The constants are uniform in $g$, $y$, and $v$.
\end{proposition}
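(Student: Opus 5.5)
The plan is to prove \eqref{eq:unit-size} first for integer $s=M\ge0$, then handle real $s$ by interpolation, and finally treat the derivative kernels \eqref{eq:unit-jet} in the same way using \eqref{eq:jet-moment}.

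\textbf{Integer case.} Fix $M\ge0$ and write $(1+d)^M\le 2^M(1+d^M)$. By Lemma~\ref{lem:packet-composition-expansion}, $d(x,y)^M\abs{\calK_g[a](x,y)}$ is dominated by $C_M\sum_{h,\mathcal D}\abs{\calK_h[\mathcal D a](x,y)}$, where every $\mathcal D a$ is smooth and supported in $\mathcal A_2$. For each such term, Plancherel for $\calF$ on $L^2(d\omega)$ gives
\[
\norm{\calK_h[\mathcal Da](\cdot,y)}_{L^2(d\omega)}\le C\norm{(\mathcal Da)(\xi)E_\kappa(-ihy,\xi)}_{L^2_\xi(d\omega)}.
\]
Estimate \eqref{eq:packet-size-bound} then bounds this by $C\,\Omega_\kappa(y)^{-1/2}\norm{\mathcal Da}_{L^2(d\xi)}$. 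Finally, Proposition~\ref{prop:compositions} with $M\ge L(\mathcal D)$ gives $\norm{\mathcal Da}_{L^2}\le C\norm{a}_{H^{L(\mathcal D)}}\le C\norm{a}_{H^M}$. Summing over the finite family proves \eqref{eq:unit-size} for integer $s$.

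\textbf{Fractional case.} For noninteger $s=M+\theta$, fix $y$ and $g$ and consider the linear map
\[
a\mapsto \Omega_\kappa(y)^{1/2}\calK_g[a](\cdot,y),
\]
restricted to amplitudes supported in $\mathcal A_1$. To make this a map on all of $H^s$, compose with multiplication by a fixed cutoff $\chi\in C_c^\infty(\operatorname{int}\mathcal A_2)$ that equals $1$ on $\mathcal A_1$; this is bounded on every $H^s$, and its outputs are supported in $\mathcal A_2$. The integer case shows that the composed map is bounded from $H^M$ into $L^2((1+d)^{2M}d\omega)$ and from $H^{M+1}$ into $L^2((1+d)^{2M+2}d\omega)$. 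Stein--Weiss complex interpolation of weighted $L^2$ spaces, $[L^2(w_0),L^2(w_1)]_\theta=L^2(w_0^{1-\theta}w_1^\theta)$, together with $[H^M,H^{M+1}]_\theta=H^s$, then gives the weight $(1+d)^{2s}$. The density of smooth functions is harmless here, since for $a$ supported in $\mathcal A_1$ we have $\chi a=a$.

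\textbf{Derivative kernels.} For \eqref{eq:unit-jet}, repeat the induction of Lemma~\ref{lem:packet-composition-expansion} using \eqref{eq:jet-moment}. Its extra term $(gv)_j\calK_g[a]$ has no derivative loss and carries a factor $\norm{v}$. Inductively,
\[
d^M\abs{\calK^{(1)}_{g,v}[a]}\le C_M\sum\bigl(\abs{\calK^{(1)}_{h,v}[\mathcal Da]}+\norm{v}\abs{\calK_h[\mathcal D'a]}\bigr),
\]
where $L(\mathcal D)\le M$ and $L(\mathcal D')\le M-1$. The first group of terms is handled by \eqref{eq:packet-jet-bound}, and the second by \eqref{eq:packet-size-bound}; interpolation then proceeds exactly as before.

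\textbf{Main obstacle.} The delicate point is this bookkeeping for the jet induction: the mixed terms must stay within the finite family of compositions, and the constants must remain uniform in $g$, $y$ and $v$. Linearity in $v$ lets one reduce to unit $v$, which keeps this manageable.
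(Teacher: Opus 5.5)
Your proposal is correct and follows essentially the same route as the paper: the integer case uses Lemma~\ref{lem:packet-composition-expansion}, Plancherel with \eqref{eq:packet-size-bound}--\eqref{eq:packet-jet-bound}, and Proposition~\ref{prop:compositions}. The jet induction via \eqref{eq:jet-moment} produces the same mixed expansion, and fractional orders come from interpolating $a\mapsto\Omega_\kappa(y)^{1/2}\calK_g[\chi a](\cdot,y)$ between weighted $L^2$ spaces, with the integer bounds valid for amplitudes supported in $\mathcal A_2$. The only difference is cosmetic: you use $(1+d)^M\le 2^M(1+d^M)$ instead of the paper's binomial expansion over $0\le q\le M$.
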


\begin{proof}
First let $M\ge0$ be an integer.  Plancherel and
\eqref{eq:packet-size-bound}--\eqref{eq:packet-jet-bound} give, for $\supp u\subset \mathcal A_2$,
\begin{align*}
 \Omega_\kappa(y)^{1/2}
 \norm{\calK_h[u](\cdot,y)}_{L^2(d\omega)}
 &=
 (c_\kappa^{\mathrm M})^{-1}
 \Omega_\kappa(y)^{1/2}
 \norm{u(\xi)E_\kappa(-ihy,\xi)}_{L^2_\xi(d\omega)}
 \le
 C\norm{u}_{L^2(d\xi)},
 \end{align*}
 and
 \begin{align*}
 \Omega_\kappa(y)^{1/2}
 \norm{\calK_{h,v}^{(1)}[u](\cdot,y)}_{L^2(d\omega)}
 &\le
 C\norm{v}\norm{u}_{L^2(d\xi)}.
\end{align*}
Lemma~\ref{lem:packet-composition-expansion} and Proposition~\ref{prop:compositions} now yield
\begin{align*}
\Omega_\kappa(y)^{1/2}
 \norm{d(\cdot,y)^M\calK_g[a](\cdot,y)}_{L^2(d\omega)}
 &\le
 C_M
 \sum_{\substack{h\in G\\\mathcal D\in\mathscr D_M}}
 \Omega_\kappa(y)^{1/2}
 \norm{(\mathcal D a)(\xi)E_\kappa(-ihy,\xi)}_{L^2_\xi(d\omega)}\\
 &\le
 C_M
 \sum_{\mathcal D\in\mathscr D_M}
 \norm{\mathcal D a}_{L^2(d\xi)}
 \le
 C_M\norm{a}_{H^M}.
\end{align*}
The same estimate with $0\le q\le M$ gives
$$
 \Omega_\kappa(y)^{1/2}
 \norm{d(\cdot,y)^q\calK_g[a](\cdot,y)}_{L^2(d\omega)}
 \le
 C_M\norm{a}_{H^M}.
$$
Hence
\begin{align*}
 \Omega_\kappa(y)^{1/2}
 \norm{(1+d(\cdot,y))^M\calK_g[a](\cdot,y)}_{L^2(d\omega)}
 &\le
 \sum_{q=0}^M\binom Mq
 \Omega_\kappa(y)^{1/2}
 \norm{d(\cdot,y)^q\calK_g[a](\cdot,y)}_{L^2(d\omega)}
 \le
 C_M\norm{a}_{H^M}.
\end{align*}

For $M=1$, the coordinate bound and \eqref{eq:jet-moment} give
\begin{align*}
 d(x,y)\abs{\calK_{g,v}^{(1)}[a](x,y)}
 &\le
 C
 \sum_{\substack{h\in G\\\mathcal D\in\mathscr D_1}}
 \abs{\calK_{h,v}^{(1)}[\mathcal D a](x,y)}
 +
 C\norm{v}
 \sum_{h\in G}
 \abs{\calK_h[a](x,y)}.
\end{align*}
Assume the same estimate at order $m\ge1$.  Apply \eqref{eq:jet-moment} to each kernel $\calK_{h, v}^{(1)}[\mathcal D a]$ and \eqref{eq:moment} to each kernel $\calK_h[\mathcal D a]$.
Since $h\mapsto\sigma_\alpha h$ permutes $G$,
\begin{align*}
 &d(x,y)^{m+1}\abs{\calK_{g,v}^{(1)}[a](x,y)}\\
 \le\,&
 C_m
 \sum_{\substack{h\in G\\\mathcal D\in\mathscr D_m}}
 d(x,y)\abs{\calK_{h,v}^{(1)}[\mathcal D a](x,y)}  +
 C_m\norm{v}
 \sum_{\substack{h\in G\\\mathcal D\in\mathscr D_{m-1}}}
 d(x,y)\abs{\calK_h[\mathcal D a](x,y)}
 \\
 \le\,&
 C_{m+1}
 \sum_{\substack{h\in G\\\mathcal D\in\mathscr D_{m+1}}}
 \abs{\calK_{h,v}^{(1)}[\mathcal D a](x,y)}
+
 C_{m+1}\norm{v}
 \sum_{\substack{h\in G\\\mathcal D\in\mathscr D_m}}
 \abs{\calK_h[\mathcal D a](x,y)}.
\end{align*}
Thus, for every $M\ge1$,
\begin{equation}\label{eq:jet-composition-expansion}
\begin{split}
 d(x,y)^M\abs{\calK_{g,v}^{(1)}[a](x,y)}
 &\le
 C_M
 \sum_{\substack{h\in G\\\mathcal D\in\mathscr D_M}}
 \abs{\calK_{h,v}^{(1)}[\mathcal D a](x,y)}
 +
 C_M\norm{v}
 \sum_{\substack{h\in G\\\mathcal D\in\mathscr D_{M-1}}}
 \abs{\calK_h[\mathcal D a](x,y)}.
\end{split}
\end{equation}
The preceding two kernel bounds and Proposition~\ref{prop:compositions} give
\begin{align*}
 \Omega_\kappa(y)^{1/2}
 \norm{d(\cdot,y)^M\calK_{g,v}^{(1)}[a](\cdot,y)}_{L^2(d\omega)}
 &\le
 C_M\norm{v}
 \left(
 \sum_{\mathcal D\in\mathscr D_M}\norm{\mathcal D a}_2
 +
 \sum_{\mathcal D\in\mathscr D_{M-1}}\norm{\mathcal D a}_2
 \right)
 \le
 C_M\norm{v}\norm{a}_{H^M}.
\end{align*}
For $M=0$ this follows directly from the second kernel bound.  Repeating the estimate for
$0\le q\le M$ and expanding $(1+d)^M$ proves \eqref{eq:unit-jet} at every integer order.

Let $s=M+\theta$, where $M\ge0$ is an integer and $0<\theta<1$.  Choose
$\chi\in C_c^\infty(\operatorname{int}\mathcal A_2)$ with $\chi=1$ on a neighbourhood of $\mathcal A_1$, and, for
$v\ne0$, define
$$
 \mathsf T_{g,y}a
 =
 \Omega_\kappa(y)^{1/2}\calK_g[\chi a](\cdot,y),
 \qquad
 \mathsf T_{g,y,v}^{(1)}a
 =
 \frac{\Omega_\kappa(y)^{1/2}}{\norm{v}}
 \calK_{g,v}^{(1)}[\chi a](\cdot,y).
$$
Since $(\supp\chi)_G\subset\operatorname{int}\mathcal A_2$, the preceding integer argument applies to
$\chi a$.  Writing $\mathsf T$ for either operator, the integer estimates, density, and the Sobolev
multiplier bound for $\chi$ give, for $r=0,1$,
$$
 \norm{\mathsf T a}_{
 L^2((1+d(\cdot,y))^{2(M+r)}d\omega)}
 \le
 C_{M+1}\norm{\chi a}_{H^{M+r}}
 \le
 C_{M,\chi}\norm{a}_{H^{M+r}}.
$$
Complex interpolation, together with $[H^M,H^{M+1}]_\theta=H^{M+\theta}$ and
\begin{align*}
 \bigl[
 L^2((1+d(\cdot,y))^{2M}d\omega),
 L^2((1+d(\cdot,y))^{2M+2}d\omega)
 \bigr]_\theta
 &=
 L^2((1+d(\cdot,y))^{2(M+\theta)}d\omega),
\end{align*}
gives \eqref{eq:unit-size} and \eqref{eq:unit-jet} at order $s$.  Since $\chi a=a$ for
$\supp a\subset \mathcal A_1$, this is the required estimate.  The case $v=0$ is immediate.
\end{proof}

The preceding kernel estimates were proved for smooth amplitudes.  The next lemma extends them to the
Sobolev amplitudes arising in the dyadic decomposition.

\begin{lemma}\label{lem:sobolev-kernel-closure}
Let $s>N/2$ and let $b\in H^s(\bbR^N)$ have continuous representative supported in $\mathcal A_0$.  For
$g\in G$, define
\begin{equation}\label{eq:sobolev-packet-kernel}
 \calK_g[b](x,y)
 =
 (c_\kappa^{\mathrm M})^{-2}
 \int_{\bbR^N}
 b(\xi)E_\kappa(i\xi,x)E_\kappa(-igy,\xi)\,d\omega(\xi).
\end{equation}
The integral converges absolutely and defines a jointly continuous function.  Uniformly in $g$,
$$
 \Omega_\kappa(y)^{1/2}
 \norm{(1+d(\cdot,y))^s\calK_g[b](\cdot,y)}_{L^2(d\omega)}
 \le
 C_s\norm{b}_{H^s}.
$$
If $\norm{y-y'}\le1$, then
\begin{equation}\label{eq:unit-difference}
 \Omega_\kappa(y)^{1/2}
 \norm{
 (1+d(\cdot,y))^s
 \bigl(\calK_g[b](\cdot,y)-\calK_g[b](\cdot,y')\bigr)
 }_{L^2(d\omega)}
 \le
 C_s\norm{y-y'}\norm{b}_{H^s}.
\end{equation}
The corresponding estimates in the first variable are
$$
 \Omega_\kappa(x)^{1/2}
 \norm{(1+d(x,\cdot))^s\calK_g[b](x,\cdot)}_{L^2(d\omega)}
 \le
 C_s\norm{b}_{H^s}
$$
and, for $\norm{x-x'}\le1$,
$$
 \Omega_\kappa(x)^{1/2}
 \norm{
 (1+d(x,\cdot))^s
 \bigl(\calK_g[b](x,\cdot)-\calK_g[b](x',\cdot)\bigr)
 }_{L^2(d\omega)}
 \le
 C_s\norm{x-x'}\norm{b}_{H^s}.
$$
Moreover,
$$
 K_b(x,y):=\calK_e[b](x,y)
$$
is an associated kernel of $T_b$ on $L_c^\infty(\bbR^N)$.  These assertions are preserved under
$b\mapsto b\circ h$, $b\mapsto\overline b\circ h$, $h\in G$, and finite sums.
\end{lemma}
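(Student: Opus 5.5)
The plan is to pass from the smooth amplitudes of Proposition~\ref{prop:unit} to $b$ by approximation. Absolute convergence comes first. Since $s>N/2$, $b$ is bounded and continuous with compact support in $\mathcal A_0$. Proposition~\ref{thm:joint} with $\nu=0$ gives
$$
\abs{E_\kappa(i\xi,x)E_\kappa(-igy,\xi)}\,w_\kappa(\xi)\le C\,w_\kappa(\xi)^{0}\,\Omega_\kappa(x)^{-1/2}\Omega_\kappa(y)^{-1/2}
$$
on $\mathcal A_2\setminus\calW$. Alternatively, one can simply use the classical bound $\abs{E_\kappa(i\xi,x)}\le1$. Either way the integrand is bounded by $\abs{b}w_\kappa\in L^1$. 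Joint continuity then follows by dominated convergence, because $E_\kappa$ is continuous.

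For the size estimate, mollify and cut off. Set $b_\varepsilon=\chi\,(\rho_\varepsilon*b)$, where $\chi\in C_c^\infty(\operatorname{int}\mathcal A_1)$ equals $1$ near $\mathcal A_0$. For small $\varepsilon$ we have $\supp b_\varepsilon\subset\mathcal A_1$, $b_\varepsilon\to b$ in $H^s$, and, by Sobolev embedding, $b_\varepsilon\to b$ uniformly.
\begin{itemize}
\item Uniform convergence and the $L^1$ domination give $\calK_g[b_\varepsilon]\to\calK_g[b]$ pointwise.
\item Fatou's lemma applied to \eqref{eq:unit-size} then yields the weighted $L^2$ bound with $\norm{b}_{H^s}$.
\end{itemize}

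For the difference estimate \eqref{eq:unit-difference}, write
$$
\calK_g[a](\cdot,y)-\calK_g[a](\cdot,y')=\int_0^1\calK^{(1)}_{g,y-y'}[a](\cdot,y'+\tau(y-y'))\,d\tau
$$
for smooth $a$, and apply Minkowski together with \eqref{eq:unit-jet}. Two comparisons are needed along the segment $y_\tau=y'+\tau(y-y')$, where $\norm{y_\tau-y}\le1$:
$$
1+d(x,y)\le 2\bigl(1+d(x,y_\tau)\bigr),\qquad \Omega_\kappa(y)\le C\,\Omega_\kappa(y_\tau).
$$
The first follows from the triangle inequality for $d$. The second holds because each factor $1+\abs{\ip{\alpha}{y}}$ changes by at most a bounded ratio under a unit shift. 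This gives the bound for smooth amplitudes, and the same Fatou limit extends it to $b$.

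The first-variable estimates use the symmetry of the kernel. Since $E_\kappa(-igy,\xi)=E_\kappa(-iy,g^{-1}\xi)$ and $E_\kappa(i\xi,x)=E_\kappa(ig^{-1}\xi,g^{-1}x)$, substituting $\xi\mapsto g\xi$ and using the $G$-invariance of $w_\kappa$ gives
$$
\calK_g[b](x,y)=\overline{\calK_{g^{-1}}[\overline{b}\circ g](y,x)}.
$$
This uses $E_\kappa(-iz,w)=\overline{E_\kappa(iz,w)}$ for real arguments. Moreover $\norm{\overline b\circ g}_{H^s}=\norm{b}_{H^s}$, and its support lies in the $G$-invariant set $\mathcal A_0$. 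So the first-variable bounds reduce to the second-variable bounds already proved. The same observations show that all the assertions are stable under $b\mapsto b\circ h$, $b\mapsto\overline b\circ h$, and finite sums, since the estimates are linear in $b$.

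Finally, $K_b=\calK_e[b]$ is an associated kernel of $T_b$. For $f\in L^\infty_c$, $T_bf=\calF^{-1}(b\,\calF f)$, where $\calF f(\xi)=(c_\kappa^{\mathrm M})^{-1}\int f(y)E_\kappa(-i\xi,y)\,d\omega(y)$ is bounded. Fubini, justified by the bound $\abs{b}\abs{f}w_\kappa(\xi)w_\kappa(y)\in L^1$, gives
$$
T_bf(x)=\int K_b(x,y)f(y)\,d\omega(y)
$$
for all $x$, using $E_\kappa(-i\xi,y)=E_\kappa(-iy,\xi)$. The main obstacle is the difference estimate: the weight comparison under a unit shift must be checked uniformly, and the limit must be justified for non-smooth $b$. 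Fatou's lemma with pointwise convergence handles the latter.
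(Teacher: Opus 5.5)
Your proposal is correct and follows essentially the same route as the paper. You mollify into $\mathcal A_1$ and apply Fatou to Proposition~\ref{prop:unit}. For the difference bound you use the fundamental theorem of calculus with \eqref{eq:unit-jet} and the unit-shift comparisons for $d$ and $\Omega_\kappa$. The first variable is handled by the transpose identity $\calK_g[b](x,y)=\overline{\calK_{g^{-1}}[\overline b\circ g](y,x)}$, and the associated-kernel property by Fubini.

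The differences are cosmetic. You obtain joint continuity directly by dominated convergence. You also verify the kernel representation by expanding $\calF^{-1}(b\,\calF f)$ rather than through the paper's pairing with a test function $h$; this needs the integral formula for $\calF^{-1}$ on $L^1\cap L^2$, and gives the identity almost everywhere rather than literally for all $x$.
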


\begin{proof}
Choose $\varepsilon_0>0$ such that
$
 \mathcal A_0+\overline{B(0,\varepsilon_0)}
 \subset
 \operatorname{int}\mathcal A_1.
$
Set
$
 K=\mathcal A_0+\overline{B(0,\varepsilon_0)}.
$
Let $\rho$ be a standard mollifier supported in $B(0,1)$, choose
$0<\varepsilon_n<\varepsilon_0$ with $\varepsilon_n\to0$, and define
$
 b_n=b*\rho_{\varepsilon_n}.
$
Then
$
 b_n\in C_c^\infty(\operatorname{int}\mathcal A_1).
$
Moreover,
$
 \supp b_n\subset K,
$
and
$$
 \norm{b_n-b}_{H^s}\longrightarrow0.
$$
Since $b_n-b$ is supported in $K$ and $w_\kappa$ is bounded there,
\begin{align*}
 \norm{b_n-b}_{L^1(d\omega)}
 &=
 \int_K\abs{b_n-b}w_\kappa\,d\xi
 \le
 \norm{w_\kappa}_{L^\infty(K)}
 \abs{K}^{1/2}\norm{b_n-b}_{L^2(d\xi)}
 \longrightarrow0.
\end{align*}
Hence $b\in L^1(d\omega)$.  No lower bound for $w_\kappa$ is used.

The positivity of the Dunkl intertwining operator
\cite[Corollary~5.4]{Rosler1999} gives
$$
 \abs{E_\kappa(iu,\xi)}\le1,
 \qquad
 u,\xi\in\bbR^N.
$$
Consequently,
\begin{align*}
 \int_{\bbR^N}
 \abs{b(\xi)E_\kappa(i\xi,x)E_\kappa(-igy,\xi)}\,d\omega(\xi)
 &\le
 \int_{\bbR^N}\abs{b(\xi)}\,d\omega(\xi)
 =
 \norm{b}_{L^1(d\omega)}.
\end{align*}
Moreover,
\begin{align*}
 \sup_{\substack{g\in G\\x,y\in\bbR^N}}
 \abs{\calK_g[b_n-b](x,y)}
 &\le
 (c_\kappa^{\mathrm M})^{-2}
 \norm{b_n-b}_{L^1(d\omega)}
 \longrightarrow0.
\end{align*}
Thus \eqref{eq:sobolev-packet-kernel} converges absolutely.  Each
$\calK_g[b_n]$ is jointly continuous by dominated convergence, and the
uniform convergence above proves the joint continuity of $\calK_g[b]$.

Proposition~\ref{prop:unit} and Fatou's lemma give
\begin{align*}
 \Omega_\kappa(y)^{1/2}
 \norm{(1+d(\cdot,y))^s\calK_g[b](\cdot,y)}_{L^2(d\omega)}
 \le\,&
 \liminf_{n\to\infty}
 \Omega_\kappa(y)^{1/2}
 \norm{(1+d(\cdot,y))^s\calK_g[b_n](\cdot,y)}_{L^2(d\omega)}\\
 \le\,&
 C_s\lim_{n\to\infty}\norm{b_n}_{H^s}
 =
 C_s\norm{b}_{H^s}.
\end{align*}

Let $v=y'-y$, $y_t=y+tv$, and $\norm{v}\le1$.  Since the orbit distance is
$1$-Lipschitz in each variable,
$$
 \abs{d(x,y_t)-d(x,y)}
 \le
 \norm{y_t-y}
 =
 t\norm{v}
 \le1.
$$
For every $\alpha\in R_+$,
\begin{align*}
 \frac{1+\abs{\ip{\alpha}{y}}}{1+\norm{\alpha}}
 &\le
 1+\abs{\ip{\alpha}{y_t}}
 \le
 (1+\norm{\alpha})
 \bigl(1+\abs{\ip{\alpha}{y}}\bigr).
\end{align*}
Hence, uniformly for $0\le t\le1$,
$$
 \Omega_\kappa(y_t)\approx\Omega_\kappa(y).
$$
The fundamental theorem of calculus gives
$$
 \calK_g[b_n](x,y)-\calK_g[b_n](x,y')
 =
 -\int_0^1\calK_{g,v}^{(1)}[b_n](x,y_t)\,dt.
$$
Minkowski's inequality and \eqref{eq:unit-jet} therefore yield
\begin{align*}
 &\Omega_\kappa(y)^{1/2}
 \norm{
 (1+d(\cdot,y))^s
 \bigl(\calK_g[b_n](\cdot,y)-\calK_g[b_n](\cdot,y')\bigr)
 }_{L^2(d\omega)}\\
 \le\,&
 C_s\int_0^1
 \Omega_\kappa(y_t)^{1/2}
 \norm{
 (1+d(\cdot,y_t))^s
 \calK_{g,v}^{(1)}[b_n](\cdot,y_t)
 }_{L^2(d\omega)}\,dt\\
 \le\,&
 C_s\int_0^1\norm{v}\norm{b_n}_{H^s}\,dt
 =\, 
 C_s\norm{v}\norm{b_n}_{H^s}.
\end{align*}
Uniform convergence and Fatou's lemma now give
\begin{align*}
 &\Omega_\kappa(y)^{1/2}
 \norm{
 (1+d(\cdot,y))^s
 \bigl(\calK_g[b](\cdot,y)-\calK_g[b](\cdot,y')\bigr)
 }_{L^2(d\omega)}\\
 \le\,&
 \liminf_{n\to\infty}
 \Omega_\kappa(y)^{1/2}
 \norm{
 (1+d(\cdot,y))^s
 \bigl(\calK_g[b_n](\cdot,y)-\calK_g[b_n](\cdot,y')\bigr)
 }_{L^2(d\omega)}\\
 \le\,&
 C_s\norm{v}\lim_{n\to\infty}\norm{b_n}_{H^s}
 =
 C_s\norm{y-y'}\norm{b}_{H^s},
\end{align*}
which proves \eqref{eq:unit-difference}.

Conjugation, symmetry, covariance, and the change of variables $\xi=g\eta$ give
\begin{equation}\label{eq:packet-transpose}
\begin{split}
 \overline{\calK_g[b](y,x)}
 &=
 (c_\kappa^{\mathrm M})^{-2}
 \int_{\bbR^N}
 \overline{b(\xi)}E_\kappa(-i\xi,y)E_\kappa(igx,\xi)\,d\omega(\xi)\\
 &=
 (c_\kappa^{\mathrm M})^{-2}
 \int_{\bbR^N}
 \overline{b(g\eta)}E_\kappa(-ig^{-1}y,\eta)E_\kappa(i\eta,x)\,d\omega(\eta)\\
 &=
 \calK_{g^{-1}}[\overline{b}\circ g](x,y).
\end{split}
\end{equation}
Since $\mathcal A_0$ is $G$-invariant,
$$
 \supp(\overline{b}\circ g)
 =
 g^{-1}(\supp b)
 \subset \mathcal A_0.
$$
Orthogonal invariance and conjugation give
$
 \|\overline{b}\circ g\|_{H^s}
 =
 \norm{b}_{H^s}.
$

Applying the estimates already proved to the right-hand side of
\eqref{eq:packet-transpose} yields
\begin{align*}
 \Omega_\kappa(x)^{1/2}
 \norm{(1+d(x,\cdot))^s\calK_g[b](x,\cdot)}_{L^2(d\omega)}
  =\,&
 \Omega_\kappa(x)^{1/2}
 \norm{
 (1+d(\cdot,x))^s
 \calK_{g^{-1}}[\overline{b}\circ g](\cdot,x)
 }_{L^2(d\omega)}\\
 \le\,&
 C_s\norm{\overline{b}\circ g}_{H^s}
 =
 C_s\norm{b}_{H^s}.
\end{align*}
If $\norm{x-x'}\le1$, then
\begin{align*}
 &\Omega_\kappa(x)^{1/2}
 \norm{
 (1+d(x,\cdot))^s
 \bigl(\calK_g[b](x,\cdot)-\calK_g[b](x',\cdot)\bigr)
 }_{L^2(d\omega)}\\
 =\,&
 \Omega_\kappa(x)^{1/2}
 \norm{
 (1+d(\cdot,x))^s
 \bigl(
 \calK_{g^{-1}}[\overline{b}\circ g](\cdot,x)
 -
 \calK_{g^{-1}}[\overline{b}\circ g](\cdot,x')
 \bigr)
 }_{L^2(d\omega)}\\
 \le\,&
 C_s\norm{x-x'}\norm{\overline{b}\circ g}_{H^s}\\
 =\,&
 C_s\norm{x-x'}\norm{b}_{H^s}.
\end{align*}

Sobolev embedding and Plancherel give, for $f\in L^2(d\omega)$,
\begin{align*}
 \norm{T_bf}_{L^2(d\omega)}
 &=
 \norm{b\calF f}_{L^2(d\omega)}
 \le
 \norm{b}_{L^\infty}\norm{\calF f}_{L^2(d\omega)}
 =
 \norm{b}_{L^\infty}\norm{f}_{L^2(d\omega)}
 \le
 C_s\norm{b}_{H^s}\norm{f}_{L^2(d\omega)}.
\end{align*}
Let $f,h\in L_c^\infty(\bbR^N)$.  The kernel bound gives
\begin{align*}
 &(c_\kappa^{\mathrm M})^{-2}
 \iiint
 \abs{
 b(\xi)E_\kappa(-i\xi,y)E_\kappa(i\xi,x)
 f(y)\overline{h(x)}
 }
 \,d\omega(\xi)\,d\omega(y)\,d\omega(x)\\
 \le\,&
 (c_\kappa^{\mathrm M})^{-2}
 \norm{b}_{L^1(d\omega)}
 \norm{f}_{L^1(d\omega)}
 \norm{h}_{L^1(d\omega)}
 <\infty.
\end{align*}
Hence Plancherel, symmetry, and Fubini's theorem give
\begin{align*}
 \ip{T_bf}{h}_{L^2(d\omega)}
 &=
 \ip{b\calF f}{\calF h}_{L^2(d\omega)}
 =
 (c_\kappa^{\mathrm M})^{-2}
 \iiint
 b(\xi)E_\kappa(-i\xi,y)E_\kappa(i\xi,x)
 f(y)\overline{h(x)}
 \,d\omega(\xi)\,d\omega(y)\,d\omega(x)\\
 &=
 \iint
 K_b(x,y)f(y)\overline{h(x)}
 \,d\omega(y)\,d\omega(x).
\end{align*}
Define
$$
 F_b(x)=\int_{\bbR^N}K_b(x,y)f(y)\,d\omega(y).
$$
Then
\begin{align*}
 \sup_{x\in\bbR^N}\abs{F_b(x)}
 &\le
 \sup_{x\in\bbR^N}
 \int_{\bbR^N}\abs{K_b(x,y)f(y)}\,d\omega(y)
 \le
 (c_\kappa^{\mathrm M})^{-2}
 \norm{b}_{L^1(d\omega)}
 \norm{f}_{L^1(d\omega)}.
\end{align*}
Thus $F_b\in L^1_{\mathrm{loc}}(d\omega)$, and the preceding pairing identity gives
$F_b=T_bf$ almost everywhere.  Hence $K_b$ is an associated kernel of $T_b$.

Finally, $G$-invariance of $\mathcal A_0$, orthogonal invariance of $H^s$, and conjugation show that the
preceding argument applies to $b\circ r$ and $\overline{b}\circ r$ for every $r\in G$.  Finite sums
follow by linearity.
\end{proof}

\subsection{Chamber endpoint spaces}

We record the chamber geometry and the endpoint spaces used below.  Write $$
 G=\{g_\rho:1\le\rho\le\abs{G}\}.
$$
Recall that $\mathcal C$ is the open chamber fixed in the introduction. For $z\in \mathcal C$ and $r>0$, set
$$
 B_\mathcal C(z,r)=B(z,r)\cap \mathcal C,
 \qquad
 V_\mathcal C(z,r)=\omega(B_\mathcal C(z,r)).
$$
The Euclidean ball $B(z,r)$ may cross the reflecting walls, and no wall separation is imposed.  Applying
\cite[Lemma~5.5]{Chamber2026} to the positive system determined by $\mathcal C$ gives
\begin{equation}\label{eq:chamber-volume-formula}
 V_\mathcal C(z,r)
 \approx
 r^N
 \prod_{\alpha\in R_+}
 \bigl(r\norm{\alpha}+\abs{\ip{\alpha}{z}}\bigr)^{2\kappa(\alpha)}.
\end{equation}
The absolute-value product is independent of the choice of positive system, and
$$
 \mathbf N=N+2\sum_{\alpha\in R_+}\kappa(\alpha).
$$
Hence
\begin{align}\label{eq:chamber-volume-scaling}
 V_\mathcal C(z,r)
 &\approx
 r^{\mathbf N}
 \prod_{\alpha\in R_+}
 \left(\norm{\alpha}+\abs{\ip{\alpha}{r^{-1}z}}\right)^{2\kappa(\alpha)}
 \approx
 r^{\mathbf N}\Omega_\kappa(r^{-1}z).
\end{align}
For $0<r\le R$, \eqref{eq:chamber-volume-formula} gives
\begin{align}\label{eq:chamber-relative-growth}
 V_\mathcal C(z,R)
 &\le
 C\left(\frac Rr\right)^N
 \prod_{\alpha\in R_+}
 \left(
 \frac{R\norm{\alpha}+\abs{\ip{\alpha}{z}}}
 {r\norm{\alpha}+\abs{\ip{\alpha}{z}}}
 \right)^{2\kappa(\alpha)}
 V_\mathcal C(z,r)
 \le
 C\left(\frac Rr\right)^{\mathbf N}V_\mathcal C(z,r).
\end{align}
Thus $(\mathcal C,\norm{\cdot},d\omega)$ is a doubling space of homogeneous type, and
$$
 V_\mathcal C(z,1)\approx\Omega_\kappa(z),
 \qquad
 \omega(\mathcal C)=\lim_{R\to\infty}V_\mathcal C(z,R)=\infty.
$$

We shall also use the fact
\begin{equation}\label{eq:chamber-distance}
 d(x,y)=\norm{x-y},
 \qquad
 x,y\in\overline{\mathcal C}.
\end{equation}

A Coifman--Weiss $(1,2)$-atom is a function $a\in L^2(\mathcal C,d\omega)$ supported in some
$B_\mathcal C(z,r)$ and satisfying
$$
 \int_\mathcal Ca\,d\omega=0,
 \qquad
 \norm{a}_{L^2(d\omega)}\le V_\mathcal C(z,r)^{-1/2}.
$$
In particular,
$$
 \norm{a}_{L^1(d\omega)}
 \le
 V_\mathcal C(z,r)^{1/2}\norm{a}_{L^2(d\omega)}
 \le1.
$$
Let $H^1_{\mathrm{at}}(\mathcal C)$ consist of all $f\in L^1(\mathcal C,d\omega)$ admitting a representation
$$
 f=\sum_{n=1}^\infty\lambda_na_n
 \quad\hbox{in }L^1(\mathcal C,d\omega),
 \qquad
 \sum_{n=1}^\infty\abs{\lambda_n}<\infty,
$$
where the $a_n$ are $(1,2)$-atoms, and set
$\displaystyle
 \norm{f}_{H^1_{\mathrm{at}}(\mathcal C)}
 =
 \inf_{f=\sum_n\lambda_na_n}
 \sum_{n=1}^\infty\abs{\lambda_n}.
$
For every such representation,
\begin{align*}
 \norm{f}_{L^1(d\omega)}
 \le
 \sum_{n=1}^\infty\abs{\lambda_n}\qquad \text{and}\qquad 
 \norm{
 f-\sum_{n=1}^M\lambda_na_n
 }_{H^1_{\mathrm{at}}(\mathcal C)}
 \le
 \sum_{n>M}\abs{\lambda_n}
 \longrightarrow0.
\end{align*}
Thus $H^1_{\mathrm{at}}(\mathcal C)$ is a subspace of $L^1(\mathcal C,d\omega)$, and finite atomic sums are
dense in it.

Assume that $b\in L^1(B_\mathcal C(z,r),d\omega)$ for every $z\in \mathcal C$ and $r>0$.  This includes integrability
up to the walls: if $\zeta\in\overline C$, $r>0$, and $z\in \mathcal C\cap B(\zeta,r)$, then
$
 B(\zeta,r)\cap C\subset B_\mathcal C(z,2r).
$

Define
\begin{align*}
 b_{B_\mathcal C(z,r)}
 =
 \frac1{V_\mathcal C(z,r)}
 \int_{B_\mathcal C(z,r)}b\,d\omega.
\end{align*}
Then
\begin{align*}
 \norm{b}_{\BMO(\mathcal C)}
 =
 \sup_{\substack{z\in \mathcal C\\r>0}}
 \frac1{V_\mathcal C(z,r)}
 \int_{B_\mathcal C(z,r)}
 \abs{b-b_{B_\mathcal C(z,r)}}\,d\omega.
\end{align*}
The space $\BMO(\mathcal C)$ consists of the functions with finite seminorm, modulo constants.  Since
$\omega(\mathcal C)=\infty$, no exceptional non-cancellative atom is needed.  Coifman--Weiss $(1,q)$-atomic
equivalence and duality, together with the John--Nirenberg inequality
\cite[Theorems~A and~B]{CW1977}, give
\begin{align*}
 \norm{b}_{\BMO(\mathcal C)}
 \approx
 \sup_{\substack{z\in \mathcal C\\r>0}}
 \left(
 \frac1{V_\mathcal C(z,r)}
 \int_{B_\mathcal C(z,r)}
 \abs{b-b_{B_\mathcal C(z,r)}}^2\,d\omega
 \right)^{1/2}.
\end{align*}
Moreover,
$
 \bigl(H^1_{\mathrm{at}}(\mathcal C)\bigr)^*
 \simeq
 \BMO(\mathcal C).
$

For the finite index set $G$, define
$$
 H^1_{\mathrm{at}}(\mathcal C;\ell^1(G))
 =
 \prod_{g\in G}H^1_{\mathrm{at}}(\mathcal C),
\qquad \text{with}\qquad
 \norm{F}_{H^1_{\mathrm{at}}(\mathcal C;\ell^1(G))}
 =
 \sum_{g\in G}\norm{F_g}_{H^1_{\mathrm{at}}(\mathcal C)}.
$$
Similarly,
$$
 \BMO(\mathcal C;\ell^\infty(G))
 =
 \prod_{g\in G}\BMO(\mathcal C),
\qquad \text{with}\qquad
 \norm{F}_{\BMO(\mathcal C;\ell^\infty(G))}
 =
 \max_{g\in G}\norm{F_g}_{\BMO(\mathcal C)}.
$$
Componentwise duality yields
$$
 \bigl(H^1_{\mathrm{at}}(\mathcal C;\ell^1(G))\bigr)^*
 \simeq
 \BMO(\mathcal C;\ell^\infty(G)),
$$
under the componentwise integral pairing, initially on finite atomic sums.

The chamber walls have $d\omega$-measure zero, and
$$
 (U^{-1}F)(gx)=F_g(x),
 \qquad
 x\in \mathcal C,\quad g\in G,
$$
defines $U^{-1}F$ almost everywhere.  Finally, set
$$
 H^1_{\ch}
 =
 U^{-1}H^1_{\mathrm{at}}(\mathcal C;\ell^1(G)),
\qquad \text{with}\qquad
 \norm{f}_{H^1_{\ch}}
 =
 \norm{Uf}_{H^1_{\mathrm{at}}(\mathcal C;\ell^1(G))}.
$$
Likewise,
$$
 \BMO_{\ch}
 =
 U^{-1}\BMO(\mathcal C;\ell^\infty(G)),
\qquad \text{with}\qquad
 \norm{f}_{\BMO_{\ch}}
 =
 \norm{Uf}_{\BMO(\mathcal C;\ell^\infty(G))}.
$$
Thus
$$
 \bigl(H^1_{\ch}\bigr)^*
 \simeq
 \BMO_{\ch}.
$$
The vector BMO space is taken modulo componentwise constants, which pull back to separate constants
on the chambers.  These are chamber-pullback spaces, and no identification with intrinsic Dunkl Hardy or
BMO spaces is intended.

\subsection{Endpoint estimates for finite dyadic sums}

We pass from the dyadic kernel estimates to weak type $(1,1)$ and the chamber endpoints.

\begin{proposition}\label{prop:finite-frequency-closure}
On the chamber space above, let $r_j=2^{-j}$ and fix $s>\mathbf N/2$.  For $j\in\bbZ$, let
$$
 S_j=\bigl(S_j^{\rho\tau}\bigr)_{1\le\rho,\tau\le\abs{G}}
$$
be a matrix operator with measurable kernel
$K_j=(K_j^{\rho\tau})_{\rho,\tau}$.  Assume that $S_j$ is associated with $K_j$ off the diagonal:
$$
 \ip{S_jF}{H}_{L^2(\mathcal C;\ell^2(G))}
 =
 \int_\mathcal C\int_\mathcal C
 \ip{K_j(x,y)F(y)}{H(x)}_{\ell^2(G)}
 \,d\omega(y)\,d\omega(x)
$$
whenever $F,H\in L_c^\infty(\mathcal C;\ell^2(G))$ and
$\operatorname{dist}(\supp F,\supp H)>0$.

Set
$$
 K_{j,0}(x,y)=K_j(x,y),
 \qquad
 K_{j,1}(x,y)=K_j(y,x)^*,
$$
where $^*$ denotes conjugate transpose.  Suppose that, for every
$j\in\bbZ$, $\varepsilon\in\{0,1\}$, $1\le\rho,\tau\le\abs{G}$, and $y\in \mathcal C$,
\begin{equation}\label{eq:closure-kernel-size}
 V_\mathcal C(y,r_j)^{1/2}
 \norm{
 \left(1+\frac{\norm{\cdot-y}}{r_j}\right)^s
 K_{j,\varepsilon}^{\rho\tau}(\cdot,y)
 }_{L^2(\mathcal C,d\omega)}
 \le A.
\end{equation}
For the same indices and all $y,y'\in \mathcal C$ with $\norm{y-y'}\le r_j$, assume that
\begin{equation}\label{eq:closure-kernel-diff}
 V_\mathcal C(y,r_j)^{1/2}
 \norm{
 \left(1+\frac{\norm{\cdot-y}}{r_j}\right)^s
 \bigl(K_{j,\varepsilon}^{\rho\tau}(\cdot,y)
       -K_{j,\varepsilon}^{\rho\tau}(\cdot,y')\bigr)
 }_{L^2(\mathcal C,d\omega)}
 \le
 A\frac{\norm{y-y'}}{r_j}.
\end{equation}

For finite $E\subset\bbZ$, set
$$
 S_E=\sum_{j\in E}S_j,
 \qquad
 K_{E,\varepsilon}=\sum_{j\in E}K_{j,\varepsilon},
$$
and suppose, uniformly in $E$, that
$$
 \norm{S_E}_{L^2(\mathcal C;\ell^2(G))\to L^2(\mathcal C;\ell^2(G))}
 \le M_2.
$$
Then, uniformly in finite $E$,
\begin{equation}\label{eq:closure-Hormander}
 \sup_{\substack{\varepsilon\in\{0,1\}\\1\le\rho,\tau\le\abs{G}}}
 \sup_{\substack{u,u'\in \mathcal C\\u\ne u'}}
 \int_{\{v\in \mathcal C:\,\norm{v-u}>2\norm{u-u'}\}}
 \abs{K_{E,\varepsilon}^{\rho\tau}(v,u)
       -K_{E,\varepsilon}^{\rho\tau}(v,u')}
 \,d\omega(v)
 \le CA.
\end{equation}
Let $X_G=\mathcal C\times G$ carry $d\omega$ times counting measure.  Then
\begin{align*}
 \norm{S_E}_{L^1(X_G)\to L^{1,\infty}(X_G)}
 &\le C(A+M_2).
\end{align*}
Moreover,
\begin{align*}
 \norm{S_E}_{H^1_{\mathrm{at}}(\mathcal C;\ell^1(G))\to L^1(\mathcal C;\ell^1(G))}
 &\le C(A+M_2).
\end{align*}
Finally,
\begin{align*}
 \norm{S_E}_{L^\infty(\mathcal C;\ell^\infty(G))\to\BMO(\mathcal C;\ell^\infty(G))}
 &\le C(A+M_2).
\end{align*}
Here $C$ depends only on $s$, $\mathbf N$, $G$, and the chamber geometric- and measure-doubling
constants, and is independent of $E$.
\end{proposition}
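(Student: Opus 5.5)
The plan is to prove the Hörmander integral condition \eqref{eq:closure-Hormander} from the weighted $L^2$ bounds by Cauchy--Schwarz and a dyadic split in $j$. Then feed it into the standard Calderón--Zygmund machinery on the doubling space $(\mathcal C,\norm{\cdot},d\omega)$, applied componentwise to the finite matrix.

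\textbf{Step 1: the Hörmander condition.} Fix $\varepsilon,\rho,\tau$, points $u\ne u'$, and set $\delta=\norm{u-u'}$. Split $E$ into $j$ with $r_j\ge\delta$ and $j$ with $r_j<\delta$.

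For $r_j\ge\delta$, apply Cauchy--Schwarz on $\{\norm{v-u}>2\delta\}$ with the weight $(1+\norm{v-u}/r_j)^{-s}$. The point is that
$$
\int_{\mathcal C}\Bigl(1+\frac{\norm{v-u}}{r_j}\Bigr)^{-2s}d\omega(v)\le C\,V_{\mathcal C}(u,r_j).
$$
This follows from \eqref{eq:chamber-relative-growth} by summing over annuli $2^kr_j$, since $2s>\mathbf N$. Combined with \eqref{eq:closure-kernel-diff}, the contribution is at most $CA\,\delta/r_j$. Summing over $r_j=2^{-j}\ge\delta$ gives a geometric series bounded by $CA$.

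For $r_j<\delta$, bound the two kernels separately. Here I use $\norm{v-u}>2\delta$, so $\norm{v-u'}>\delta$ and $\norm{v-u'}\approx\norm{v-u}$. By Cauchy--Schwarz,
$$
\int_{\norm{v-u}>2\delta}\abs{K_{j,\varepsilon}(v,u)}\,d\omega
\le A\,V_{\mathcal C}(u,r_j)^{-1/2}\Bigl(\int_{\norm{v-u}>\delta}\Bigl(1+\frac{\norm{v-u}}{r_j}\Bigr)^{-2s}d\omega\Bigr)^{1/2}.
$$
Doubling bounds the right-hand side by $CA(r_j/\delta)^{s-\mathbf N/2}$. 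The same bound holds at $u'$. Summing over $r_j<\delta$ gives $CA$ because $s>\mathbf N/2$.

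This is uniform in $E$, which yields \eqref{eq:closure-Hormander}. Using $\varepsilon=1$ gives the same condition for the transposed kernel.

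\textbf{Step 2: weak type $(1,1)$.} Regard $S_E$ as a scalar operator on $X_G=\mathcal C\times G$ with distance $\norm{x-y}$ inside a sheet (and $\infty$ across sheets, or simply work componentwise). Its $L^2$ bound is $M_2$, and it has the off-diagonal kernel representation. Run the Calderón--Zygmund decomposition of a function $F\in L^1$ at height $\lambda$, one component at a time, with respect to chamber balls. The good part is controlled by the $L^2$ bound. For each bad piece $b_Q$ with mean zero, use the $\varepsilon=0$ Hörmander condition off the doubled ball $2Q$. The exceptional set $\bigcup 2Q$ has measure $\lesssim\norm{F}_1/\lambda$ by doubling. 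This gives $C(A+M_2)$.

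\textbf{Step 3: $H^1$ and BMO.} For an atom $a$ supported in $B_{\mathcal C}(z,r)$:

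\begin{itemize}
\item On $B_{\mathcal C}(z,2r)$, use $L^2$ boundedness together with Cauchy--Schwarz against $V_{\mathcal C}(z,2r)^{1/2}$, which is comparable to $V_{\mathcal C}(z,r)^{1/2}$.
\item Off $B_{\mathcal C}(z,2r)$, use the cancellation of $a$: write the kernel as $K(v,y)-K(v,z)$ and apply \eqref{eq:closure-Hormander}.
\end{itemize}

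This gives $\norm{S_Ea}_{L^1}\le C(A+M_2)$ for each component. Finite atomic sums are dense, and the atomic norm dominates the $L^1$ norm, so the bound extends to all of $H^1_{\mathrm{at}}(\mathcal C;\ell^1(G))$.

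For $L^\infty\to\BMO$, argue by duality using $(H^1_{\mathrm{at}})^*\simeq\BMO$. The transpose $S_E^*$ has kernel $K_{E,1}$, and Step 3 applies to it since the hypotheses are symmetric in $\varepsilon$. Thus $S_E^*:H^1\to L^1$, hence $S_E:L^\infty\to\BMO$.

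I expect the main obstacle to be a technical one: making sense of $S_Ef$ for $f\in L^\infty$ (which is not in $L^2$). The plan is the standard definition: on a ball $B$, set
$$
S_Ef=S_E(f\chi_{2B})+\int K(\cdot,y)f(y)\chi_{(2B)^c}\,d\omega(y)\quad\text{modulo constants}.
$$
One then checks consistency as $B$ varies and verifies the pairing with finite atomic sums. A second point needing care is that off-diagonal association only holds for separated supports. I will handle this through the usual truncation, using compactly supported approximations of atoms and $L^2$ continuity.
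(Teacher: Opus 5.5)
Your proposal is correct and follows essentially the same route as the paper. It proves \eqref{eq:closure-Hormander} by Cauchy--Schwarz against $(1+\norm{\cdot-u}/r_j)^{-s}$, splitting into $r_j\ge\delta$ (difference bound, geometric sum of $\delta/r_j$) and $r_j<\delta$ (separate tail bounds, sum of $(r_j/\delta)^{s-\mathbf N/2}$); then it uses a componentwise Calder\'on--Zygmund decomposition with the cancellation estimate off $2B$, the atom estimate split at $2B$, and duality with $S_E^*$ (kernel $K_{E,1}$) for $L^\infty\to\BMO$.

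The differences are only in implementation. The paper runs the decomposition on Hyt\"onen--Kairema dyadic cubes. For $L^\infty\to\BMO$, it simply defines $S_EF$ as the BMO element given by Coifman--Weiss duality and checks that it agrees with the $L^2$ operator on $L^\infty\cap L^2$, so your truncation formula is not needed.
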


\begin{proof}
Set
$
 \beta=s-\frac{\mathbf N}{2}>0.
$
By \eqref{eq:chamber-relative-growth}, for $a\ge1$,
\begin{align*}
 \int_{\{x:\,\norm{x-y}>a r_j\}}
 \left(1+\frac{\norm{x-y}}{r_j}\right)^{-2s}
 \,d\omega(x)
 &\le
 \sum_{k=0}^\infty
 (2^ka)^{-2s}V_\mathcal C(y,2^{k+1}a r_j)\\
 &\le
 C V_\mathcal C(y,r_j)a^{\mathbf N-2s}
 \sum_{k=0}^\infty2^{k(\mathbf N-2s)}
 \le
 C V_\mathcal C(y,r_j)a^{-2\beta}.
\end{align*}
Also,
\begin{align*}
 \int_\mathcal C
 \left(1+\frac{\norm{x-y}}{r_j}\right)^{-2s}
 \,d\omega(x)
 &\le
 V_\mathcal C(y,r_j)
 +
 \sum_{k=0}^\infty2^{-2ks}V_\mathcal C(y,2^{k+1}r_j)
 \le
 C V_\mathcal C(y,r_j).
\end{align*}
Hence Cauchy--Schwarz and \eqref{eq:closure-kernel-size} give
\begin{align}\label{eq:closure-L1-tail}
& \int_{\{\norm{x-y}>a r_j\}}
 \abs{K_{j,\varepsilon}^{\rho\tau}(x,y)}\,d\omega(x)\notag\\
 \le\,&
 \left(
 \int_{\{\norm{x-y}>a r_j\}}
 \left(1+\frac{\norm{x-y}}{r_j}\right)^{-2s}
 \,d\omega(x)
 \right)^{1/2}
 \norm{
 \left(1+\frac{\norm{\cdot-y}}{r_j}\right)^s
 K_{j,\varepsilon}^{\rho\tau}(\cdot,y)
 }_2\notag\\
 \le\,&
 C V_\mathcal C(y,r_j)^{1/2}a^{-\beta}
 \cdot A V_\mathcal C(y,r_j)^{-1/2} 
 =  CAa^{-\beta},
 \qquad a\ge1.
\end{align}
Similarly, \eqref{eq:closure-kernel-diff} and the full-weight integral give
\begin{align}\label{eq:closure-L1-diff}
 &\int_\mathcal C
 \abs{K_{j,\varepsilon}^{\rho\tau}(x,y)
       -K_{j,\varepsilon}^{\rho\tau}(x,y')}
 \,d\omega(x) \notag\\
 \le\,&
 \left(
 \int_\mathcal C
 \left(1+\frac{\norm{x-y}}{r_j}\right)^{-2s}
 \,d\omega(x)
 \right)^{1/2}
 \norm{
 \left(1+\frac{\norm{\cdot-y}}{r_j}\right)^s
 \bigl(K_{j,\varepsilon}^{\rho\tau}(\cdot,y)
       -K_{j,\varepsilon}^{\rho\tau}(\cdot,y')\bigr)
 }_2\notag\\
 \le\,&
 C V_\mathcal C(y,r_j)^{1/2}
 \cdot A\frac{\norm{y-y'}}{r_j}V_\mathcal C(y,r_j)^{-1/2}\notag\\
 =\,&  CA\frac{\norm{y-y'}}{r_j},
 \qquad
 \norm{y-y'}\le r_j.
\end{align}

Fix $y\ne y'$ and set $\delta=\norm{y-y'}$.  Since
$$
 \norm{x-y}>2\delta
 \quad\Longrightarrow\quad
 \norm{x-y'}>\delta,
$$
choose $j_0\in\bbZ$ so that $r_{j_0+1}<\delta\le r_{j_0}$.  
Then for every $\varepsilon\in\{0,1\}$ and $1\le\rho,\tau\le\abs{G}$, \eqref{eq:closure-L1-tail} gives
\begin{align*}
\sum_{j\in E, r_j\le\delta}\int_{\{\norm{x-y}>2\delta\}}
 \abs{K_{j,\varepsilon}^{\rho\tau}(x,y)}
 \,d\omega(x)
&\le
\sum_{j\in E, j\ge j_0+1}\int_{\{\norm{x-y}>2^{j-j_0}r_j\}}
 \abs{K_{j,\varepsilon}^{\rho\tau}(x,y)}
 \,d\omega(x)\\
&\lesssim \sum_{j:j-j_0\ge1}2^{-\beta(j-j_0)}\,A
\lesssim A
\end{align*}
and
\begin{align*}
\sum_{j\in E, r_j\le\delta}\int_{\{\norm{x-y}>2\delta\}}
 \abs{K_{j,\varepsilon}^{\rho\tau}(x,y')}
 \,d\omega(x)
&\le
\sum_{j\in E, j\ge j_0+1}\int_{\{\norm{x-y'}>2^{j-j_0-1}r_j\}}
 \abs{K_{j,\varepsilon}^{\rho\tau}(x,y')}
 \,d\omega(x)\\
&\lesssim \sum_{j:j-j_0-1\ge0}2^{-\beta(j-j_0-1)}\,A
\lesssim A.
\end{align*}
\eqref{eq:closure-L1-diff} gives
\begin{align*}
\sum_{j\in E, r_j>\delta}\int_{\{\norm{x-y}>2\delta\}}
 \abs{K_{j,\varepsilon}^{\rho\tau}(x,y)
       -K_{j,\varepsilon}^{\rho\tau}(x,y')}
 \,d\omega(x)
 &\lesssim
 \sum_{j\in E, j\le j_0} A\frac{\norm{y-y'}}{r_j}
 \le \sum_{j: j-j_0\le 0} 2^{j-j_0}A
 \lesssim A.
\end{align*}
Summing the three terms above yields
\begin{align*}
 \int_{\{\norm{x-y}>2\delta\}}
 \abs{K_{E,\varepsilon}^{\rho\tau}(x,y)
       -K_{E,\varepsilon}^{\rho\tau}(x,y')}
 \,d\omega(x)
 \le CA,
\end{align*}
which proves \eqref{eq:closure-Hormander}.

For finite $E$, summation gives the off-diagonal association of $S_E$ with $K_{E,0}$, while coordinate
injection and projection give
$$
 \norm{S_E^{\rho\tau}}_{2\to2}
 \le
 \norm{S_E}_{L^2(\mathcal C;\ell^2(G))\to L^2(\mathcal C;\ell^2(G))}
 \le M_2.
$$
For a chamber ball $B=B_\mathcal C(z,r)$, write $2B=B_\mathcal C(z,2r)$.  We first prove
\begin{equation}\label{eq:closure-cancellation}
 \int_{\mathcal C\setminus2B}\abs{S_E^{\rho\tau}b}\,d\omega
 \le
 CA\norm{b}_1
\end{equation}
whenever
$$
 b\in L^1(\mathcal C)\cap L^2(\mathcal C),
 \qquad
 \supp b\subset B,
 \qquad
 \int_\mathcal Cb\,d\omega=0.
$$
Choose $0\le\chi_n\le1$ in $L_c^\infty(B)$ with $\chi_n\to1$ almost everywhere, fix
$\zeta\in L_c^\infty(B)$ with $\int_\mathcal C\zeta\,d\omega=1$, and set
$$
 \widetilde b_n
 =
 \chi_nb\mathbf1_{\{\abs{b}\le n\}},\qquad 
 b_n
 =
 \widetilde b_n-\zeta\int_\mathcal C\widetilde b_n\,d\omega.
$$
Then
$$
 b_n\in L_c^\infty(B),
 \qquad
 \int_\mathcal Cb_n\,d\omega=0,
 \qquad
 b_n\longrightarrow b
 \quad\hbox{in }L^1(\mathcal C)\cap L^2(\mathcal C).
$$
For $D\subset C\setminus2B$ and $h_0\in L_c^\infty(D)$,
\begin{align*}
 \ip{S_E^{\rho\tau}b_n}{h_0}
 &=
 \int_D\overline{h_0(x)}
 \int_B
 \left (K_{E,0}^{\rho\tau}(x,y)-K_{E,0}^{\rho\tau}(x,z)\right)b_n(y)
 \,d\omega(y)\,d\omega(x).
\end{align*}
Since
$$
 x\notin2B,\ y\in B
 \quad\Longrightarrow\quad
 \norm{x-z}\ge2r>2\norm{z-y},
$$
Applying \eqref{eq:closure-Hormander} to the ordered pair $(z,y)$ and then using Fubini gives
\begin{align*}
 \int_{\mathcal C\setminus2B}\abs{S_E^{\rho\tau}b_n(x)}\,d\omega(x)
 &\le
 \int_B\abs{b_n(y)}
 \int_{\{\norm{x-z}>2\norm{z-y}\}}
 \abs{K_{E,0}^{\rho\tau}(x,y)-K_{E,0}^{\rho\tau}(x,z)}
 \,d\omega(x)\,d\omega(y)
 \le
 CA\norm{b_n}_1.
\end{align*}
Moreover,
$$
 \norm{S_E^{\rho\tau}(b_n-b)}_2
 \le
 M_2\norm{b_n-b}_2
 \longrightarrow0.
$$
Thus a subsequence, Fatou's lemma, and $\norm{b_n}_1\to\norm{b}_1$ prove
\eqref{eq:closure-cancellation}.

Let $T=S_E^{\rho\tau}$ and $f\in L^1(\mathcal C)\cap L^2(\mathcal C)$.  If $A+M_2=0$, then $S_E=0$ on
$L^2(\mathcal C;\ell^2(G))$, and all conclusions are immediate.  Hence assume
$$
 L=A+M_2>0.
$$
Fix a dyadic system $\mathscr D$ from \cite[Theorem~2.2]{HK2012}, with parameter
$0<\delta_D<1$.  
For $Q\in\mathscr D_k$,
$$
 B_\mathcal C(z_Q,c_D\delta_D^k)
 \subset Q
 \subset
 B_Q:=B_\mathcal C(z_Q,C_D\delta_D^k),
 \qquad
 \omega(B_Q)\le C\omega(Q).
$$
If $Q^k(x)$ is the generation-$k$ cube containing $x$, set $R_k=2C_D\delta_D^k$.  Then
\begin{align*}
 \omega(Q^k(x))
 &\ge
 V_\mathcal C(z_{Q^k(x)},c_D\delta_D^k)
 \gtrsim
 \delta_D^{k\mathbf N}
 \longrightarrow\infty,
 \qquad k\longrightarrow-\infty.
\end{align*}
Moreover,
$$
 Q^k(x)
 \subset B_\mathcal C(x,R_k),
\qquad \text{and}\qquad
 V_\mathcal C(x,R_k)
 \le C\omega(Q^k(x)).
$$
Consequently, for almost every $x$,
\begin{align*}
 \frac1{\omega(Q^k(x))}
 \int_{Q^k(x)}\abs{f(y)}\,d\omega(y)
 \le
 \frac{\norm{f}_1}{\omega(Q^k(x))}
 \longrightarrow0,
 \qquad k\longrightarrow-\infty,
 \end{align*}
 and
 \begin{align*}
 &\frac1{\omega(Q^k(x))}
 \int_{Q^k(x)}\abs{f(y)-f(x)}\,d\omega(y)
 \le\,
 \frac{C}{V_\mathcal C(x,R_k)}
 \int_{B_\mathcal C(x,R_k)}
 \abs{f(y)-f(x)}\,d\omega(y)
 \longrightarrow0,
 \qquad k\longrightarrow\infty.
\end{align*}

At height
$
 \alpha=\frac{\lambda}{L},
$
let $\{Q_i\}$ be the maximal dyadic cubes satisfying
$$
 \frac1{\omega(Q_i)}\int_{Q_i}\abs{f}\,d\omega>\alpha.
$$
They are pairwise disjoint.  If $\widehat Q_i$ is the parent of $Q_i$, then
$$
 \abs{f_{Q_i}}
 \le
 \frac{\omega(\widehat Q_i)}{\omega(Q_i)}
 \frac1{\omega(\widehat Q_i)}
 \int_{\widehat Q_i}\abs{f}\,d\omega
 \le C\alpha.
$$
Set
$$
 b_i=(f-f_{Q_i})\mathbf1_{Q_i},
 \qquad
 g=f-\sum_i b_i.
$$
Dyadic differentiation and maximality give $\abs{f}\le\alpha$ almost everywhere outside
$\bigcup_iQ_i$.  Since $g=f$ there and $g=f_{Q_i}$ on $Q_i$,
\begin{align*}
 \abs{g}
 &=
 \abs{f}\mathbf1_{\mathcal C\setminus\bigcup_iQ_i}
 +
 \sum_i\abs{f_{Q_i}}\mathbf1_{Q_i}
 \le
 C\alpha.
\end{align*}
Moreover,
\begin{align*}
 \norm{g}_1
 &\le
 \int_{\mathcal C\setminus\bigcup_iQ_i}\abs{f}\,d\omega
 +
 \sum_i\abs{f_{Q_i}}\omega(Q_i)
 \le
 \norm{f}_1.
\end{align*}
Consequently,
\begin{align*}
 \norm{g}_2^2
 &\le
 \norm{g}_\infty\norm{g}_1
 \le
 C\alpha\norm{f}_1.
\end{align*}

The bad functions satisfy
$$
 \supp b_i\subset Q_i,
 \qquad
 \int_\mathcal Cb_i\,d\omega
 =
 \int_{Q_i}f\,d\omega-f_{Q_i}\omega(Q_i)
 =0.
$$
Their $L^1$ norms are summable, since
\begin{align*}
 \sum_i\norm{b_i}_1
 &\le
 \sum_i
 \left(
 \int_{Q_i}\abs{f}\,d\omega
 +
 \abs{f_{Q_i}}\omega(Q_i)
 \right)
 \le
 2\norm{f}_1.
\end{align*}
By the choice of $Q_i$ and their pairwise disjointness,
\begin{align*}
 \alpha\sum_i\omega(Q_i)
 &<
 \sum_i\int_{Q_i}\abs{f}\,d\omega
 \le
 \norm{f}_1.
\end{align*}
Finally, disjointness of the $Q_i$ and $\abs{f_{Q_i}}\le C\alpha$ give
\begin{align*}
 \sum_i\norm{b_i}_2^2
 &\le
 2\sum_i\int_{Q_i}\abs{f}^2\,d\omega
 +
 2\sum_i\abs{f_{Q_i}}^2\omega(Q_i)
 \\
 &\le
 2\norm{f}_2^2
 +
 C\alpha\sum_i\abs{f_{Q_i}}\omega(Q_i)
 \le
 2\norm{f}_2^2+C\alpha\norm{f}_1
 <
 \infty.
\end{align*}
Therefore, the series $\sum_i b_i$ converges to $f-g$ in $L^1$, regardless of the order of summation, since,
\begin{align*}
 \norm{\sum_{i=1}^M b_i-(f-g)}_1
 &\le
 \sum_{i>M}\norm{b_i}_1
 \longrightarrow0.
\end{align*}
Moreover,
\begin{align*}
 \norm{\sum_{i=1}^M b_i-(f-g)}_2^2
 &=
 \sum_{i>M}\norm{b_i}_2^2
 \longrightarrow0.
\end{align*}

Let
$$
 B_i=B_{Q_i},
 \qquad
 \mathcal O^*=\bigcup_i2B_i.
$$
The dyadic ball inclusions and doubling give
\begin{align*}
 \omega(\mathcal O^*)
 &\le
 C\sum_i\omega(Q_i)
 \le
 C\frac{L}{\lambda}\norm{f}_1.
\end{align*}
Moreover,
\begin{align*}
 \omega\{\abs{Tg}>\lambda/3\}
 &\le
 \frac9{\lambda^2}\norm{Tg}_2^2
 \le
 \frac{9M_2^2}{\lambda^2}\norm{g}_2^2
 \le
 C\frac{M_2^2}{\lambda L}\norm{f}_1
 \le
 C\frac{L}{\lambda}\norm{f}_1.
\end{align*}
For $b^{(M)}=\sum_{i=1}^Mb_i$, \eqref{eq:closure-cancellation} gives
\begin{align*}
 \int_{\mathcal C\setminus\mathcal O^*}\abs{Tb^{(M)}}\,d\omega
 &\le
 \sum_{i=1}^M
 \int_{\mathcal C\setminus2B_i}\abs{Tb_i}\,d\omega
 \le
 CA\sum_{i=1}^M\norm{b_i}_1.
\end{align*}
Since
$$
 \norm{Tb^{(M)}-T(f-g)}_2
 \le
 M_2\norm{b^{(M)}-(f-g)}_2
 \longrightarrow0,
$$
a subsequence and Fatou's lemma yield
$$
 \int_{\mathcal C\setminus\mathcal O^*}\abs{T(f-g)}\,d\omega
 \le
 CA\norm{f}_1.
$$
Therefore
\begin{align*}
 \omega\{\abs{Tf}>\lambda\}
 &\le
 \omega(\mathcal O^*)
 +\omega\{\abs{Tg}>\lambda/3\}
 +\frac3\lambda
 \int_{\mathcal C\setminus\mathcal O^*}\abs{T(f-g)}\,d\omega
 \le
 C\frac{A+M_2}{\lambda}\norm{f}_1.
\end{align*}
Density and the completeness of $L^{1,\infty}$ give the scalar weak-type extension. Hence, for
vector-valued $F$,
\begin{align*}
 &\sum_{1\le\rho\le\abs{G}}\omega
 \bigl\{x:\abs{(S_EF)_\rho(x)}>\lambda\bigr\}
 \le\,
 \sum_{1\le\rho,\tau\le\abs{G}}
 \omega\left\{x:
 \abs{S_E^{\rho\tau}F_\tau(x)}>\frac{\lambda}{\abs{G}}
 \right\}  \le
 \frac{C_G(A+M_2)}{\lambda}
 \sum_{\tau=1}^{\abs{G}}\norm{F_\tau}_1.
\end{align*}

Let $a$ be a scalar $(1,2)$-atom supported in $B=B_\mathcal C(z,r)$.  By doubling,
\eqref{eq:closure-cancellation}, and the scalar $L^2$ bound,
\begin{align*}
 \norm{S_E^{\rho\tau}a}_1
 &\le
 \omega(2B)^{1/2}\norm{S_E^{\rho\tau}a}_2
 +
 \int_{\mathcal C\setminus2B}\abs{S_E^{\rho\tau}a}\,d\omega
 \le
 CM_2\omega(B)^{1/2}\norm{a}_2
 +CA\norm{a}_1
 \le
 C(A+M_2).
\end{align*}
Thus every finite componentwise atomic sum $H$ satisfies
\begin{align*}
 \norm{S_EH}_{L^1(\mathcal C;\ell^1(G))}
 &\le
 \sum_{1\le\rho,\tau\le\abs{G}}
 \norm{S_E^{\rho\tau}H_\tau}_1
 \le
 C_G(A+M_2)
 \norm{H}_{H^1_{\mathrm{at}}(\mathcal C;\ell^1(G))}.
\end{align*}
Density gives the asserted $H^1\to L^1$ extension, which agrees with the weak-type extension on their
common domain.

For separated supports,
$$
 \ip{S_E^*F}{H}_{L^2(\mathcal C;\ell^2(G))}
 =
 \int_\mathcal C\int_\mathcal C
 \ip{K_{E,1}(x,y)F(y)}{H(x)}_{\ell^2(G)}
 \,d\omega(y)\,d\omega(x).
$$
Thus $K_{E,1}(x,y)=K_{E,0}(y,x)^*$ is the adjoint kernel.  Since
$\norm{S_E^*}_{2\to2}\le M_2$, the preceding atomic argument gives
$$
 \norm{S_E^*H}_{L^1(\mathcal C;\ell^1(G))}
 \le
 C_G(A+M_2)
 \norm{H}_{H^1_{\mathrm{at}}(\mathcal C;\ell^1(G))}.
$$
For $F\in L^\infty(\mathcal C;\ell^\infty(G))$ and a finite componentwise atomic sum $H$, define
$$
 \Lambda_F(H)=\ip{S_E^*H}{F}_{X_G}.
$$
Then
\begin{align*}
 \abs{\Lambda_F(H)}
 &\le
 \norm{F}_{L^\infty(\mathcal C;\ell^\infty(G))}
 \norm{S_E^*H}_{L^1(\mathcal C;\ell^1(G))}
 \le
 C_G(A+M_2)
 \norm{F}_{L^\infty(\mathcal C;\ell^\infty(G))}
 \norm{H}_{H^1_{\mathrm{at}}(\mathcal C;\ell^1(G))}.
\end{align*}
Componentwise Coifman--Weiss duality gives
$B_F\in\BMO(\mathcal C;\ell^\infty(G))$ such that
$$
 \ip{H}{B_F}_{X_G}
 =
 \Lambda_F(H).
$$
Moreover,
\begin{align*}
 \norm{B_F}_{\BMO(\mathcal C;\ell^\infty(G))}
 \le
 C_G(A+M_2)
 \norm{F}_{L^\infty(\mathcal C;\ell^\infty(G))}.
\end{align*}
Define $S_EF=B_F$.  If
$F\in L^\infty(\mathcal C;\ell^\infty(G))\cap L^2(\mathcal C;\ell^2(G))$, then
$$
 \ip{H}{B_F}_{X_G}
 =
 \ip{S_E^*H}{F}_{X_G}
 =
 \ip{H}{S_E^{(2)}F}_{X_G},
$$
where $S_E^{(2)}$ denotes the original $L^2$ operator.  Hence $B_F$ is precisely the BMO class of
$S_E^{(2)}F$.
\end{proof}

\subsection{Proof of the multiplier theorem and applications}

\begin{proof}[Proof of Theorem~\ref{thm:main}]
By Lemma~\ref{lem:dyadic},
$
 \norm{m}_{L^\infty(\bbR^N)}\le C\calS_\sigma(m).
$
Hence Plancherel gives
$
 \norm{T_m}_{L^2(d\omega)\to L^2(d\omega)}
 \le C\calS_\sigma(m).
$

Define
$
 m_j(\xi)=\psi(2^{-j}\norm{\xi})m(\xi)
 $ and $
 b_j(\eta)=m_j(2^j\eta)=\psi(\norm{\eta})m(2^j\eta).
$
By definition,
\begin{equation}\label{eq:bj-bound}
 \sup_{j\in\bbZ}\norm{b_j}_{H^\sigma}
 =
 \calS_\sigma(m).
\end{equation}
Moreover, $\sigma>\mathbf N/2\ge N/2$, so each $b_j$ has a continuous representative supported in
$
 \supp\bigl(\psi(\norm{\cdot})\bigr)\subset \mathcal A_0,
$
and so is every $b_j\circ g$.

For a Sobolev amplitude $b$ supported in $\mathcal A_0$, write
$$
 K_b=\calK_e[b].
$$
By Lemma~\ref{lem:sobolev-kernel-closure}, $K_b$ is an associated kernel of $T_b$. For $G=\{g_\rho:1\le\rho\le\abs{G}\}$,
$$
 K_b^{\rho\tau}(x,y)
 =
 K_b(g_\rho x,g_\tau y)
 =
 \calK_{g_\rho^{-1}g_\tau}[b\circ g_\rho](x,y),
 \qquad x,y\in \mathcal C.
$$

Set $r_j=2^{-j}$, $K_j=(K_{m_j}^{\rho\tau})_{\rho,\tau}$, and
\begin{align*}
 K_{j,0}^{\rho\tau}(x,y)
 &=K_{m_j}^{\rho\tau}(x,y).
\end{align*}
Also set
$$
 b_{j,0}=b_j.
$$
For the adjoint kernels, set
\begin{align*}
 K_{j,1}^{\rho\tau}(x,y)
 &=\overline{K_{m_j}^{\tau\rho}(y,x)}
 =K_{\overline{m_j}}^{\rho\tau}(x,y).
\end{align*}
Finally, set
$$
 b_{j,1}=\overline{b_j}.
$$
The change of variables $\xi=2^jg_\rho\eta$ gives, for $\varepsilon\in\{0,1\}$,
\begin{equation}\label{eq:lifted-scaling}
 K_{j,\varepsilon}^{\rho\tau}(x,y)
 =
 2^{j\mathbf N}
 \calK_{g_\rho^{-1}g_\tau}[b_{j,\varepsilon}\circ g_\rho](2^jx,2^jy).
\end{equation}
By \eqref{eq:chamber-distance}, $d(x,y)=\norm{x-y}$ on $C$.  Since $C$ is a cone, homogeneity and
\eqref{eq:chamber-volume-scaling} give
$$
 d\omega(2^{-j}X)=2^{-j\mathbf N}d\omega(X),
 \qquad
 V_\mathcal C(y,r_j)=2^{-j\mathbf N}V_\mathcal C(2^jy,1)
 \approx
 2^{-j\mathbf N}\Omega_\kappa(2^jy).
$$
Writing $Y=2^jy$, Lemma~\ref{lem:sobolev-kernel-closure} and \eqref{eq:lifted-scaling} give
\begin{align*}
 &V_\mathcal C(y,r_j)^{1/2}
 \norm{
 \left(1+\frac{\norm{\cdot-y}}{r_j}\right)^s
 K_{j,\varepsilon}^{\rho\tau}(\cdot,y)
 }_2\\
 =\,&
 V_\mathcal C(y,r_j)^{1/2}2^{j\mathbf N/2}
 \norm{(1+\norm{\cdot-Y})^s
 \calK_{g_\rho^{-1}g_\tau}[b_{j,\varepsilon}\circ g_\rho](\cdot,Y)}_2\\
 \le\,&
 C\Omega_\kappa(Y)^{1/2}
 \norm{(1+\norm{\cdot-Y})^s
 \calK_{g_\rho^{-1}g_\tau}[b_{j,\varepsilon}\circ g_\rho](\cdot,Y)}_2\\
 \le\,&
 C_s\norm{b_j}_{H^\sigma}.
\end{align*}
If $\norm{y-y'}\le r_j$, set $Y'=2^jy'$.  Since
$$
 \norm{Y-Y'}=\frac{\norm{y-y'}}{r_j}\le1,
$$
Lemma~\ref{lem:sobolev-kernel-closure} and the same change of variables give
\begin{align*}
 &V_\mathcal C(y,r_j)^{1/2}
 \norm{
 \left(1+\frac{\norm{\cdot-y}}{r_j}\right)^s
 \bigl(K_{j,\varepsilon}^{\rho\tau}(\cdot,y)
       -K_{j,\varepsilon}^{\rho\tau}(\cdot,y')\bigr)
 }_2
\le
 C_s\norm{Y-Y'}\norm{b_j}_{H^\sigma}
 =
 C_s\frac{\norm{y-y'}}{r_j}\norm{b_j}_{H^\sigma}.
\end{align*}
These bounds are uniform in $j$, $\varepsilon$, $\rho$, and $\tau$.

Each $m_j$ is bounded and compactly supported, hence belongs to $L^1(d\omega)$.  Since
$\abs{E_\kappa(iu,v)}\le1$ for real $u,v$, Fubini's theorem gives
$$
 \ip{T_{m_j}^GF}{H}_{L^2(\mathcal C;\ell^2(G))}
 =
 \int_\mathcal C\int_\mathcal C
 \ip{K_j(x,y)F(y)}{H(x)}_{\ell^2(G)}
 \,d\omega(y)\,d\omega(x)
$$
for $F,H\in L_c^\infty(\mathcal C;\ell^2(G))$.  For finite $E\subset\bbZ$, set
$$
 \Psi_E(\xi)=\sum_{j\in E}\psi(2^{-j}\norm{\xi}),
 \qquad
 m_E=\Psi_Em=\sum_{j\in E}m_j.
$$
For $\xi\ne0$,
\begin{align*}
 0\le \Psi_E(\xi)\le\sum_{j\in\bbZ}\psi(2^{-j}\norm{\xi})=1.
\end{align*}
Consequently,
$
 \abs{m_E(\xi)}\le\abs{m(\xi)},
$
and hence
$
 \norm{m_E}_\infty\le\norm{m}_\infty.
$
Finite summation and Fubini give
\begin{align*}
 K_{m_E}^{\rho\tau}(x,y)
 &=
 \sum_{j\in E}K_{m_j}^{\rho\tau}(x,y).
\end{align*}
Plancherel gives
\begin{align*}
 \norm{T_{m_E}^G}_{2\to2}
 =
 \norm{T_{m_E}}_{2\to2}
 \le
 \norm{m_E}_\infty
 \le
 \norm{m}_\infty.
\end{align*}
Here $m_E\in L^1(d\omega)$ because it is bounded and compactly supported.
Thus \eqref{eq:bj-bound} and Proposition~\ref{prop:finite-frequency-closure}, with
$A=C\calS_\sigma(m)$
and
$M_2=\norm{m}_\infty$
give, uniformly in finite $E$,
\begin{align}\label{eq:finite-endpoints}
 &\norm{T_{m_E}^G}_{L^1(X_G)\to L^{1,\infty}(X_G)}
 +\norm{T_{m_E}^G}_{H^1_{\mathrm{at}}(\mathcal C;\ell^1(G))\to L^1(\mathcal C;\ell^1(G))}
 +\norm{T_{m_E}^G}_{L^\infty(\mathcal C;\ell^\infty(G))\to\BMO(\mathcal C;\ell^\infty(G))}
 \le
 C\calS_\sigma(m).
\end{align}
Here we also used \eqref{eq:automatic-Linfty}.  If $F=Uf$, then
$$
 \omega\{x\in\bbR^N:\abs{T_{m_E}f(x)}>\lambda\}
 =
 \sum_{\rho=1}^{\abs{G}}
 \omega\{x\in \mathcal C:\abs{(T_{m_E}^GF)_\rho(x)}>\lambda\},
$$
and the finite-frequency conclusions transfer through $U$.

For $E_J=\{-J,\ldots,J\}$, set
$$
 \Psi_J(\xi)=\sum_{\abs{j}\le J}\psi(2^{-j}\norm{\xi}),
 \qquad
 m^{(J)}=\Psi_Jm.
$$
For $\xi\ne0$,
$$
 0\le \Psi_J(\xi)\le1,
 \qquad
 \Psi_J(\xi)\longrightarrow1,
 \qquad
 \abs{m^{(J)}(\xi)-m(\xi)}\le2\norm{m}_\infty.
$$
Therefore, for $f\in L^2(d\omega)$,
\begin{align*}
 &\abs{m^{(J)}(\xi)-m(\xi)}^2\abs{\calF f(\xi)}^2
 \le
 4\norm{m}_\infty^2\abs{\calF f(\xi)}^2
 \in L^1(d\omega).
\end{align*}
Hence dominated convergence gives
\begin{align}\label{eq:truncation-L2}
 \norm{T_{m^{(J)}}f-T_mf}_2^2
 &=
 \int_{\bbR^N}
 \abs{m^{(J)}(\xi)-m(\xi)}^2\abs{\calF f(\xi)}^2\,d\omega(\xi)
 \longrightarrow0.
\end{align}
The same convergence holds with $m$ replaced by $\overline m$.

Let $f\in L^1\cap L^2$.  Along a subsequence $J_k$, the preceding convergence is almost everywhere.
Hence, for $0<\theta<1$,
$$
 \{\abs{T_mf}>\lambda\}
 \subset
 \liminf_{k\to\infty}
 \{\abs{T_{m^{(J_k)}}f}>\theta\lambda\}.
$$
Consequently,
\begin{align*}
 \omega\{\abs{T_mf}>\lambda\}
 &\le
 \liminf_{k\to\infty}
 \omega\{\abs{T_{m^{(J_k)}}f}>\theta\lambda\} \le
 \frac{C\calS_\sigma(m)}{\theta\lambda}\norm{f}_1.
\end{align*}
Letting $\theta\uparrow1$ gives the weak bound on $L^1\cap L^2$.  Since this space is dense in
$L^1$ and $L^{1,\infty}$ is complete under an equivalent quasi-norm, $T_m$ extends uniquely to $L^1$,
with
$$
 \norm{T_mf}_{L^{1,\infty}}
 \le
 C\calS_\sigma(m)\norm{f}_1,
 \qquad f\in L^1(d\omega).
$$
Interpolation with the $L^2$ multiplier bound gives the strong $L^p$ estimate for $1<p\le2$.
Since
$$
 T_m^*=T_{\overline m},
 \qquad
 \calS_\sigma(\overline m)=\calS_\sigma(m),
$$
duality gives the estimate for $2\le p<\infty$.

The weak-$L^1$ and $L^p$ realizations agree on $L^1\cap L^p$.  If $f\in L^1\cap L^p$, choose $f_n\in L_c^\infty$ with
$f_n\to f$ in both spaces.  The weak-$L^1$ and $L^p$ realizations agree with the $L^2$ multiplier on
$f_n$ and their common images converge in measure to both extensions. Hence the two limits agree.

Write
$$
 T_m^{G,(2)}=UT_m U^{-1},
 \qquad
  T_{\overline m}^{G,(2)}=UT_{\overline m} U^{-1}
$$
for the $L^2$ realization.  Let $F$ be a finite componentwise atomic sum.  Then
$F\in L^2(\mathcal C;\ell^2(G))$ and
$$
 T_{m^{(J)}}^GF
 \longrightarrow
 T_m^{G,(2)}F
 \quad\hbox{in }L^2(\mathcal C;\ell^2(G)).
$$
After passing to a subsequence, Fatou's lemma gives
\begin{align*}
 \norm{T_m^{G,(2)}F}_{L^1(\mathcal C;\ell^1(G))}
 &\le
 \liminf_{k\to\infty}
 \norm{T_{m^{(J_k)}}^GF}_{L^1(\mathcal C;\ell^1(G))}
 \le
 C\calS_\sigma(m)
 \norm{F}_{H^1_{\mathrm{at}}(\mathcal C;\ell^1(G))}.
\end{align*}
Density gives
$$
 T_m^G:H^1_{\mathrm{at}}(\mathcal C;\ell^1(G))\longrightarrow L^1(\mathcal C;\ell^1(G)),
$$
and hence $T_m:H^1_{\ch}\to L^1(d\omega)$.

Denote the atomic and weak-$L^1$ extensions by $T_m^{G,H^1}$ and $T_m^{G,w}$, respectively.  For
finite componentwise atomic sums $F_n\to F$ in $H^1_{\mathrm{at}}(\mathcal C;\ell^1(G))$,
\begin{align*}
 T_m^{G,H^1}F_n
 \longrightarrow T_m^{G,H^1}F
 \qquad \text{in }L^1(\mathcal C;\ell^1(G))\hookrightarrow L^{1,\infty}(X_G).
\end{align*}
Moreover,
\begin{align*}
 T_m^{G,w}F_n
 \longrightarrow T_m^{G,w}F
 \qquad \text{in }L^{1,\infty}(X_G).
\end{align*}
Both realizations agree with $T_m^{G,(2)}$ on every $F_n$, so their limits coincide.

Since $\calS_\sigma(\overline m)=\calS_\sigma(m)$, the same argument gives
$$
 \norm{T_{\overline m}^GH}_{L^1(\mathcal C;\ell^1(G))}
 \le
 C\calS_\sigma(m)
 \norm{H}_{H^1_{\mathrm{at}}(\mathcal C;\ell^1(G))}.
$$
This realization agrees with $T_{\overline m}^{G,(2)}$ on finite componentwise atomic sums.
For $F\in L^\infty(\mathcal C;\ell^\infty(G))$ and a finite componentwise atomic sum $H$, define
$$
 \Lambda_F(H)=\ip{T_{\overline m}^GH}{F}_{X_G}.
$$
Then
\begin{align*}
 \abs{\Lambda_F(H)}
 &\le
 \norm{T_{\overline m}^GH}_{L^1(\mathcal C;\ell^1(G))}
 \norm{F}_{L^\infty(\mathcal C;\ell^\infty(G))} \le
 C\calS_\sigma(m)
 \norm{H}_{H^1_{\mathrm{at}}(\mathcal C;\ell^1(G))}
 \norm{F}_{L^\infty(\mathcal C;\ell^\infty(G))}.
\end{align*}
Componentwise Coifman--Weiss duality yields $B_F\in\BMO(\mathcal C;\ell^\infty(G))$ such that
$$
 \ip{H}{B_F}_{X_G}
 =\Lambda_F(H).
$$
Moreover,
\begin{align*}
 \norm{B_F}_{\BMO(\mathcal C;\ell^\infty(G))}
 \le
 C\calS_\sigma(m)\norm{F}_{L^\infty(\mathcal C;\ell^\infty(G))}.
\end{align*}
Define $T_m^GF=B_F$.  Applying $U^{-1}$ gives
$$
 T_m:L^\infty(d\omega)\longrightarrow\BMO_{\ch}.
$$
If $F\in L^\infty(\mathcal C;\ell^\infty(G))\cap L^2(\mathcal C;\ell^2(G))$, then, for every finite
componentwise atomic sum $H$,
\begin{align*}
 \ip{H}{B_F}_{X_G}
 &=
 \ip{T_{\overline m}^GH}{F}_{X_G}
 =
 \ip{T_{\overline m}^{G,(2)}H}{F}_{X_G}
 =
 \ip{H}{T_m^{G,(2)}F}_{X_G}.
\end{align*}
Thus $B_F$ is the BMO class of $T_m^{G,(2)}F$.
\end{proof}

\subsection{Examples of multipliers}

We give two applications of Theorem~\ref{thm:main}.
The first concerns symbols that depend only on the direction
of the frequency:
$$
 m_a(\xi)=a\left(\frac{\xi}{\norm{\xi}}\right),
 \qquad \xi\ne0.
$$
For example, $a(\theta)=\theta_k$ gives
$m_a(\xi)=\xi_k/\norm{\xi}$.
Since $m_a(t\xi)=m_a(\xi)$ for $t>0$, all its normalised
dyadic pieces coincide. The required dyadic Sobolev bound
then follows from the Sobolev regularity of $a$ on
$\mathbb S^{N-1}$. The function $a$ need not be $G$-invariant
or vanish near the sets
$\mathbb S^{N-1}\cap\alpha^\perp$, $\alpha\in R$.

The second application gives sufficient conditions in terms
of ordinary partial derivatives of $m$. We also consider
symbols of the form
$m(\xi)=a(\xi/\norm{\xi})F(\norm{\xi})$.
Taking $a\equiv1$ and $F(r)=r^{iu}$ gives the imaginary powers
$(-\Delta_\kappa)^{iu/2}$, $u\in\bbR$.

\begin{corollary}\label{cor:angular}
Let $\sigma>\mathbf N/2$ and $a\in H^\sigma(\mathbb S^{N-1})$.  Set
$$
 m_a(\xi)=a\left(\frac{\xi}{\norm{\xi}}\right),
 \quad \xi\ne0,
 \qquad
 m_a(0)=0.
$$
Then all conclusions of Theorem~\ref{thm:main} hold for $T_{m_a}$, and
\begin{align*}
 \norm{T_{m_a}}_{L^p\to L^p}
 &\le
 C_p\norm{a}_{H^\sigma(\mathbb S^{N-1})},
 \qquad 1<p<\infty,
 \end{align*}
 and
 \begin{align*}
 \norm{T_{m_a}}_{L^1\to L^{1,\infty}}
 +\norm{T_{m_a}}_{H^1_{\ch}\to L^1}
 +\norm{T_{m_a}}_{L^\infty\to\BMO_{\ch}}
 &\le
 C\norm{a}_{H^\sigma(\mathbb S^{N-1})}.
\end{align*}
No $G$-invariance or wall-separation assumption is imposed on $a$.
\end{corollary}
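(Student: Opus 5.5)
The plan is to reduce Corollary~\ref{cor:angular} directly to Theorem~\ref{thm:main} by proving
$$
 \calS_\sigma(m_a)\le C\norm{a}_{H^\sigma(\mathbb S^{N-1})}.
$$
Since $m_a(2^j\xi)=m_a(\xi)$ for $\xi\ne0$, every normalised dyadic piece equals the single function
$$
 b(\xi)=\psi(\norm{\xi})\,a\!\left(\frac{\xi}{\norm{\xi}}\right).
$$
Thus $\calS_\sigma(m_a)=\norm{b}_{H^\sigma(\bbR^N)}$. The whole task is to bound this one Sobolev norm by $\norm{a}_{H^\sigma(\mathbb S^{N-1})}$. The value $m_a(0)=0$ is immaterial, since $T_m$ depends only on the almost everywhere values of $m$.

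For the Sobolev bound I will work in polar coordinates on the annulus $\{1/2<\norm{\xi}<2\}$, which contains $\supp b$. The map $\Phi:(1/2,2)\times\mathbb S^{N-1}\to\bbR^N$, $\Phi(r,\theta)=r\theta$, is a smooth diffeomorphism onto the annulus with bounded Jacobian in both directions. Sobolev spaces of compactly supported functions are invariant under such diffeomorphisms, so
$$
 \norm{b}_{H^\sigma(\bbR^N)}\approx\norm{\psi\otimes a}_{H^\sigma(\bbR\times\mathbb S^{N-1})}.
$$
The right side is controlled by $\norm{\psi}_{H^\sigma(\bbR)}\norm{a}_{H^\sigma(\mathbb S^{N-1})}$. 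This is clear for integer $\sigma$ by the Leibniz rule in local charts, and follows for general $\sigma$ by complex interpolation in $a$ with $\psi$ fixed. The constant depends only on $N$, $\sigma$ and $\psi$.

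The main point needing care is the general case $\sigma\notin\bbZ$. Here I would define $H^\sigma(\mathbb S^{N-1})$ by a partition of unity and charts, or equivalently through $(1-\Delta_{\mathbb S^{N-1}})^{\sigma/2}$. I would then use that the extension map $a\mapsto b$ is linear and bounded $H^k(\mathbb S^{N-1})\to H^k(\bbR^N)$ for every integer $k\ge0$. Interpolating between consecutive integers gives boundedness at order $\sigma$.

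With this bound, Theorem~\ref{thm:main} applies to $m_a$, and all of its conclusions follow with $\calS_\sigma(m_a)$ replaced by $C\norm{a}_{H^\sigma(\mathbb S^{N-1})}$. No $G$-invariance of $a$ and no separation from the sets $\mathbb S^{N-1}\cap\alpha^\perp$ enter anywhere, because Theorem~\ref{thm:main} requires neither.
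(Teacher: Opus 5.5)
Your proposal is correct and follows essentially the same route as the paper: every normalised dyadic piece of $m_a$ equals $\psi(\norm{\eta})a(\eta/\norm{\eta})$, the extension map $a\mapsto\psi(\norm{\cdot})a(\cdot/\norm{\cdot})$ is bounded $H^q(\mathbb S^{N-1})\to H^q(\bbR^N)$ at integer orders by polar coordinates and the chain rule, complex interpolation gives order $\sigma$, and Theorem~\ref{thm:main} then finishes the proof. The paper treats $N=1$ separately as immediate, and your argument also covers that case because $\mathbb S^0$ is a two-point set.
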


\begin{proof}
The case $N=1$ is immediate.  Suppose $N\ge2$ and define
$$
 \mathcal E a(\eta)
 =
 \psi(\norm{\eta})a\left(\frac{\eta}{\norm{\eta}}\right),
 \qquad \eta\ne0.
$$
For $a\in C^\infty(\mathbb S^{N-1})$, $q\in\mathbb N_0$, $\abs{\beta}\le q$, and
$r\in\supp\psi$,
$$
 \abs{\partial^\beta(\mathcal Ea)(r\theta)}
 \le
 C_{q,\psi}
 \sum_{k=0}^{\abs{\beta}}
 \abs{\nabla_{\mathbb S}^{k}a(\theta)}.
$$
Hence, by polar coordinates,
\begin{align*}
 \norm{\mathcal Ea}_{H^q(\bbR^N)}^2
 &\le
 C_{q,\psi}
 \sum_{\abs{\beta}\le q}
 \int_{\supp\psi}\int_{\mathbb S^{N-1}}
 \bigg(
 \sum_{k=0}^{\abs{\beta}}
 \abs{\nabla_{\mathbb S}^{k}a(\theta)}
 \bigg)^2
 r^{N-1}\,d\theta\,dr
 \le
 C_{q,\psi}
 \sum_{k=0}^q
 \norm{\nabla_{\mathbb S}^{k}a}_{L^2(\mathbb S^{N-1})}^2
 \le
 C_{q,\psi}\norm{a}_{H^q(\mathbb S^{N-1})}^2.
\end{align*}
By density and interpolation,
$$
 \norm{\mathcal Ea}_{H^\sigma(\bbR^N)}
 \le
 C_{\sigma,\psi}\norm{a}_{H^\sigma(\mathbb S^{N-1})}.
$$
Moreover, for every $j\in\bbZ$ and $\eta\ne0$,
$$
 \psi(\norm{\eta})m_a(2^j\eta)
 =
 \psi(\norm{\eta})a\left(\frac{\eta}{\norm{\eta}}\right)
 =
 \mathcal Ea(\eta),
$$
and therefore
$$
 \calS_\sigma(m_a)
 =
 \norm{\mathcal Ea}_{H^\sigma(\bbR^N)}
 \le
 C_{\sigma,\psi}\norm{a}_{H^\sigma(\mathbb S^{N-1})}.
$$
Theorem~\ref{thm:main} completes the proof.
\end{proof}

For $L\in\mathbb N_0$, define
$$
 M_L(m)
 =
 \max_{\abs{\beta}\le L}
 \sup_{\xi\ne0}
 \norm{\xi}^{\abs{\beta}}\abs{\partial^\beta m(\xi)}.
$$

\begin{corollary}\label{cor:mikhlin}
Let $L\in\mathbb N_0$ satisfy $L>\mathbf N/2$.  Suppose that
$m\in C^L(\bbR^N\setminus\{0\})$ and $M_L(m)<\infty$, and extend $m$ arbitrarily to the origin.
Then all conclusions of Theorem~\ref{thm:main} hold, with
\begin{align*}
 \norm{T_m}_{L^p\to L^p}
 &\le
 C_pM_L(m),
 \qquad 1<p<\infty,
 \end{align*}
 and
$ \norm{T_m}_{L^1\to L^{1,\infty}}
 +\norm{T_m}_{H^1_{\ch}\to L^1}
 +\norm{T_m}_{L^\infty\to\BMO_{\ch}}
 \le
 CM_L(m).
$
Here $C$ and $C_p$ depend only on $R$, $\kappa$, $L$, $\psi$, and, for $C_p$, $p$.

For angular--radial symbols, let $a\in C^L(\mathbb S^{N-1})$ and suppose that
$F\in C^L(0,\infty)$ satisfies
$$
 R_L(F)
 =
 \max_{0\le q\le L}\sup_{r>0}r^q\abs{F^{(q)}(r)}
 <\infty.
$$
For the symbol
$$
 m(\xi)=a\left(\frac{\xi}{\norm{\xi}}\right)F(\norm{\xi}),
 \qquad
 \xi\ne0,
$$
one has
$$
 M_L(m)
 \le
 C_{N,L}\norm{a}_{C^L(\mathbb S^{N-1})}R_L(F),
$$
and hence the same operator bounds hold with $M_L(m)$ replaced by
$C_{N,L}\norm{a}_{C^L(\mathbb S^{N-1})}R_L(F)$.  In particular, if $u\in\bbR$ and $F(r)=r^{iu}$,
they hold with $M_L(m)$ replaced by
$$
 C_L(1+\abs{u})^L\norm{a}_{C^L(\mathbb S^{N-1})}.
$$
When $a\equiv1$, $T_m=(-\Delta_\kappa)^{iu/2}$.
\end{corollary}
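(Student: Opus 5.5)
The plan is to deduce Corollary~\ref{cor:mikhlin} from Theorem~\ref{thm:main} with $\sigma=L$. It therefore suffices to prove
\[
\calS_L(m)\le C_{N,L,\psi}M_L(m),
\]
followed by a separate pointwise estimate for $M_L$ of angular--radial symbols. Since $L$ is an integer with $L>\mathbf N/2$, the theorem applies directly, and the constants inherit the stated dependence.

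\textbf{Step 1: the dyadic Sobolev bound.} Fix $j\in\bbZ$ and put $b_j(\eta)=\psi(\norm{\eta})m(2^j\eta)$. This function is $C^L$ and supported in $\{1/2\le\norm{\eta}\le2\}$. For $\abs{\beta}\le L$, the Leibniz rule expresses $\partial^\beta b_j$ as a sum of terms of the form
\[
\partial^{\beta-\gamma}\bigl(\psi(\norm{\cdot})\bigr)\,2^{j\abs{\gamma}}(\partial^\gamma m)(2^j\eta).
\]
On the support we have $\norm{2^j\eta}\approx2^j$. Hence
\[
2^{j\abs{\gamma}}\abs{(\partial^\gamma m)(2^j\eta)}\le C\norm{2^j\eta}^{\abs{\gamma}}\abs{\partial^\gamma m(2^j\eta)}\le CM_L(m).
\]
It follows that $\norm{\partial^\beta b_j}_{L^\infty}\le C M_L(m)$. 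Because the support has bounded measure, $\norm{b_j}_{H^L}\le CM_L(m)$ uniformly in $j$. The value of $m$ at the origin is irrelevant, since it is a null set. Theorem~\ref{thm:main} then yields every stated operator bound with $\calS_L(m)$ replaced by $CM_L(m)$.

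\textbf{Step 2: angular--radial symbols.} Write $m=\widetilde a\cdot\widetilde F$, where $\widetilde a(\xi)=a(\xi/\norm{\xi})$ and $\widetilde F(\xi)=F(\norm{\xi})$.
\begin{itemize}
\item $\widetilde a$ is homogeneous of degree $0$, so $\partial^\gamma\widetilde a$ is homogeneous of degree $-\abs{\gamma}$. Its values on the unit sphere are bounded by $C_{N,L}\norm{a}_{C^L(\mathbb S^{N-1})}$, obtained from derivatives of the smooth map $\xi\mapsto\xi/\norm{\xi}$ on the sphere. Therefore $\norm{\xi}^{\abs{\gamma}}\abs{\partial^\gamma\widetilde a(\xi)}\le C\norm{a}_{C^L}$.
\item For $\widetilde F$, the Faà di Bruno formula writes $\partial^\delta\widetilde F(\xi)$ as a combination of terms $F^{(q)}(\norm{\xi})$, $q\le\abs{\delta}$, multiplied by products of derivatives of $\norm{\xi}$. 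These products are homogeneous of total degree $q-\abs{\delta}$. Consequently $\norm{\xi}^{\abs{\delta}}\abs{\partial^\delta\widetilde F}\le C\sum_q\norm{\xi}^q\abs{F^{(q)}(\norm{\xi})}\le CR_L(F)$.
\end{itemize}
Applying the Leibniz rule to the product gives $M_L(m)\le C_{N,L}\norm{a}_{C^L}R_L(F)$.

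\textbf{Step 3: imaginary powers.} For $F(r)=r^{iu}$ we have $r^qF^{(q)}(r)=(iu)(iu-1)\cdots(iu-q+1)r^{iu}$, whose modulus is at most $(1+\abs{u})^q$. Hence $R_L(F)\le(1+\abs{u})^L$. When $a\equiv1$, the identification $T_m=(-\Delta_\kappa)^{iu/2}$ follows from $\calF(-\Delta_\kappa f)=\norm{\xi}^2\calF f$ and the spectral calculus.

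I expect no serious obstacle; the only care needed is tracking the homogeneity in Step 2 so that the constants depend only on $N$ and $L$.
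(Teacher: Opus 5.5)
Your proposal follows the paper's proof essentially step for step. The Leibniz rule on $\psi(\norm{\eta})m(2^j\eta)$ gives $\calS_L(m)\le C_{L,\psi}M_L(m)$, so Theorem~\ref{thm:main} applies with $\sigma=L$. The chain rule then gives $M_L(m)\le C_{N,L}\norm{a}_{C^L}R_L(F)$. Your split into the $0$-homogeneous factor $a(\xi/\norm{\xi})$ and a Fa\`a di Bruno expansion of $F(\norm{\xi})$ is a more explicit version of the paper's ``polar-coordinate chain rule.''

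One inequality in Step~3 is false as written. The bound $\abs{(iu)(iu-1)\cdots(iu-q+1)}\le(1+\abs{u})^q$ fails already for $q=5$. For small $\abs{u}$ the product behaves like $(q-1)!\,\abs{u}$. At $\abs{u}=1/10$ the product is about $2.42$, while $(1.1)^5\approx1.61$. So your conclusion $R_L(F)\le(1+\abs{u})^L$ is false for $L\ge5$.

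The fix is immediate and is what the paper does. For $\ell<q\le L$ one has $\abs{iu-\ell}=\sqrt{u^2+\ell^2}\le\ell+\abs{u}\le L(1+\abs{u})$, hence $R_L(F)\le L^L(1+\abs{u})^L$. This is exactly the constant $C_L$ permitted in the statement, so the corollary itself is unaffected.
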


\begin{proof}
Let
$
 \mathcal A_\psi=\supp\bigl(\psi(\norm{\cdot})\bigr)
 \subset
 \bbR^N\setminus\{0\}.
$
For $\abs{\gamma}\le L$,
\begin{align*}
   \partial^\gamma\bigl[\psi(\norm{\eta})m(2^j\eta)\bigr]
 &=
 \sum_{\beta\le\gamma}
 \binom{\gamma}{\beta}
 \partial^{\gamma-\beta}\bigl[\psi(\norm{\cdot})\bigr](\eta)
 2^{j\abs{\beta}}(\partial^\beta m)(2^j\eta),
 \end{align*}
 and
 \begin{align*}
 \sup_{j\in\bbZ}
 \norm{\psi(\norm{\cdot})m(2^j\cdot)}_{H^L(\bbR^N)}
 &\le
 C_{L,\psi}
 \max_{\abs{\gamma}\le L}
 \sum_{\beta\le\gamma}
 \sup_{\substack{j\in\bbZ\\\eta\in \mathcal A_\psi}}
 2^{j\abs{\beta}}\abs{(\partial^\beta m)(2^j\eta)}\\
 &\le
 C_{L,\psi}M_L(m)
 \max_{\abs{\gamma}\le L}
 \sum_{\beta\le\gamma}
 \sup_{\eta\in \mathcal A_\psi}\norm{\eta}^{-\abs{\beta}}
 \le
 C_{L,\psi}M_L(m).
\end{align*}
Thus $\calS_L(m)\le C_{L,\psi}M_L(m)$, and Theorem~\ref{thm:main} applies with $\sigma=L$.

For $\xi=r\theta$, $r>0$, and $\abs{\beta}\le L$, the polar-coordinate chain rule gives
\begin{align*}
 r^{\abs{\beta}}\abs{(\partial^\beta m)(r\theta)}
 &\le
 C_{N,L}
 \sum_{q=0}^{\abs{\beta}}
 \sum_{k=0}^{\abs{\beta}-q}
 \abs{\nabla_{\mathbb S}^{k}a(\theta)}
 r^q\abs{F^{(q)}(r)}
 \le
 C_{N,L}\norm{a}_{C^L(\mathbb S^{N-1})}R_L(F).
\end{align*}
Taking the supremum over $r$, $\theta$, and $\beta$ proves
$$
 M_L(m)
 \le
 C_{N,L}\norm{a}_{C^L(\mathbb S^{N-1})}R_L(F).
$$
Finally, for $0\le q\le L$,
$$
 r^q\abs{\frac{d^q}{dr^q}r^{iu}}
 =
 \prod_{\ell=0}^{q-1}\abs{iu-\ell}
 \le
 C_L(1+\abs{u})^L,
$$
which gives the last assertion.
\end{proof}

\subsection{A comparison based only on spectral averages}

The estimates above concern derivatives in the frequency variable.
They do not show how the kernel decays in the spatial variable.  The heat
kernel yields a spectral $L^2$ average which, when used without Proposition~\ref{thm:joint}, costs an
additional $N/2$ Euclidean derivatives. 

\begin{proposition}\label{prop:raw-average}
Let $\mathcal A\subset\bbR^N\setminus\{0\}$ be compact.  For every multi-index $\nu$, there is a constant
$C_{\mathcal A,\nu}$ such that
\begin{equation}\label{eq:raw-average}
 \sup_{y\in\bbR^N}
 \Omega_\kappa(y)
 \int_\mathcal A
 \abs{\partial_y^\nu E_\kappa(i\xi,y)}^2\,d\omega(\xi)
 \le
 C_{\mathcal A,\nu}.
\end{equation}
\end{proposition}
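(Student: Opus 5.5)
Since $\mathcal A$ is compact and $\abs{\partial_y^\nu E_\kappa(i\xi,y)}^2 w_\kappa(\xi)\le C\sup\abs{\cdots}^2$ away from growth, the issue is only the factor $\Omega_\kappa(y)$, i.e.\ large $\norm{y}$. I would bound the spectral average by a heat-kernel expression. Fix $t>0$; because $\mathcal A$ is compact, $e^{-t\norm{\xi}^2}\ge c_{\mathcal A}>0$ on $\mathcal A$, and $\norm{\xi}^{2\abs{\nu}}$ is bounded there. Hence
\begin{equation*}
 \int_{\mathcal A}\abs{\partial_y^\nu E_\kappa(i\xi,y)}^2\,d\omega(\xi)
 \le C\int_{\bbR^N}e^{-2t\norm{\xi}^2}\abs{\partial_y^\nu E_\kappa(i\xi,y)}^2\,d\omega(\xi).
\end{equation*}
The right-hand side is, up to $(c_\kappa^{\mathrm M})^2$, the squared $L^2(d\omega)$ norm of $\xi\mapsto e^{-t\norm{\xi}^2}\partial_y^\nu E_\kappa(-i\xi,y)$, i.e.\ by Plancherel the $L^2(d\omega)$ norm of $x\mapsto\partial_y^\nu h_t(x,y)$ with $h_t$ the Dunkl heat kernel. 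Alternatively, and more directly, for $\nu=0$ the semigroup property and $E_\kappa(i\xi,y)\overline{E_\kappa(i\xi,y)}$ integration give exactly $h_{4t}(y,y)$ up to constants, i.e.\ the heat kernel on the diagonal.

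Next I would use the known Gaussian upper bound for the Dunkl heat kernel on the diagonal (Dziuba\'nski--Hejna): $h_t(y,y)\le C\,\omega(B(y,\sqrt t))^{-1}$, together with $\omega(B(y,1))\approx\Omega_\kappa(y)$, which follows from the volume formula \eqref{eq:chamber-volume-formula} (the ball volume $\approx\prod_\alpha(1+\abs{\ip{\alpha}{y}})^{2\kappa(\alpha)}$). With $t$ fixed (say $t=1/4$) this gives $\Omega_\kappa(y)\,h_1(y,y)\le C$, proving \eqref{eq:raw-average} for $\nu=0$.

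For $\abs{\nu}\ge1$, the identity becomes $\int e^{-2t\norm\xi^2}\abs{\partial_y^\nu E_\kappa(i\xi,y)}^2d\omega(\xi)=c\,\partial_y^\nu\partial_{y'}^\nu h_{4t}(y,y')\big|_{y'=y}$ (differentiating under the integral, justified by Gaussian decay and polynomial bounds $\abs{\partial_y^\nu E_\kappa(i\xi,y)}\le\norm\xi^{\abs\nu}$ from the Laplace representation). The main obstacle is then the mixed Euclidean derivative bound for the heat kernel: I would invoke the Dziuba\'nski--Hejna estimates $\abs{\partial_x^{\nu}\partial_y^{\mu}h_t(x,y)}\le C t^{-(\abs\nu+\abs\mu)/2}\,\omega(B(x,\sqrt t))^{-1}e^{-c\,d(x,y)^2/t}$ (their regularity estimates are for one variable; the other follows from symmetry $h_t(x,y)=h_t(y,x)$ and the semigroup property $h_{4t}=\int h_{2t}(\cdot,z)h_{2t}(z,\cdot)\,d\omega(z)$ plus Cauchy--Schwarz, which reduces the mixed derivative on the diagonal to $\norm{\partial_y^\nu h_{2t}(\cdot,y)}_{L^2(d\omega)}^2$). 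Bounding that $L^2$ norm by the pointwise derivative bound times the Gaussian integral $\int e^{-2c\,d(z,y)^2/t}d\omega(z)\lesssim\omega(B(y,\sqrt t))$ yields $C\,\omega(B(y,1))^{-1}\approx\Omega_\kappa(y)^{-1}$ at $t$ fixed, completing the proof.
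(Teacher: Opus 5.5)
Your argument is correct and is essentially the paper's proof. Both bound the spectral average by the Gaussian-weighted one, identify that with the diagonal value of $\partial_x^\nu\partial_z^\nu h_{2t}(x,z)$ (equivalently, via Plancherel as you note, with $\norm{\partial_y^\nu h_{t}(\cdot,y)}_{L^2(d\omega)}^2$), and then conclude with the Gaussian derivative bounds for the Dunkl heat kernel and $\omega(B(y,1))\approx\Omega_\kappa(y)$.

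The paper quotes the two-variable estimate of Theorem~4.1(d) in \cite{ADH2019} directly at $x=z=y$, so your one-variable workaround (semigroup identity or Plancherel, then the Gaussian integral $\lesssim\omega(B(y,\sqrt t))$) is valid but unnecessary. Your heat time should be $2t$ rather than $4t$ under the paper's normalization, which is immaterial.
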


\begin{proof}
The heat-kernel representation is
\begin{align*}
 h_t(x,z)
 &=
 (c_\kappa^{\mathrm M})^{-1}
 \int_{\bbR^N}
 e^{-t\norm{\xi}^2}
 E_\kappa(i\xi,x)E_\kappa(-i\xi,z)\,d\omega(\xi),
\end{align*}
By \cite[Proposition~2.1(4)]{Langen2026}, the differentiated integrand satisfies
\begin{align*}
 e^{-\norm{\xi}^2}
 \abs{\partial_x^\nu E_\kappa(i\xi,x)
       \partial_z^\nu E_\kappa(-i\xi,z)}
 &\le
 C_\nu e^{-\norm{\xi}^2}(1+\norm{\xi})^{2\abs{\nu}}
 \in L^1(\bbR^N,d\omega),
\end{align*}
uniformly in $x,z\in\bbR^N$.  Hence differentiation under the integral is valid, and using
$\overline{\partial_y^\nu E_\kappa(i\xi,y)}=\partial_y^\nu E_\kappa(-i\xi,y)$ yields
\begin{align*}
 \int_{\bbR^N}
 e^{-\norm{\xi}^2}
 \abs{\partial_y^\nu E_\kappa(i\xi,y)}^2\,d\omega(\xi)
 =
 c_\kappa^{\mathrm M}
 \partial_x^\nu\partial_z^\nu h_1(x,z)\big|_{x=z=y}
 \le
 c_\kappa^{\mathrm M}
 \abs{\partial_x^\nu\partial_z^\nu h_1(x,z)\big|_{x=z=y}}
 \le
 \frac{C_\nu}{\omega(B(y,1))}.
\end{align*}
Here the last inequality is
\cite[Theorem~4.1(d)]{ADH2019} with $t=1$, $x=z=y$, equal spatial multi-indices, and no time
derivatives.  If $M_\mathcal A=\max_{\xi\in \mathcal A}\norm{\xi}$, then 
\begin{align*}
 \Omega_\kappa(y)
 \int_\mathcal A\abs{\partial_y^\nu E_\kappa(i\xi,y)}^2\,d\omega(\xi)
 &\le
 e^{M_\mathcal A^2}\Omega_\kappa(y)
 \int_{\bbR^N}
 e^{-\norm{\xi}^2}
 \abs{\partial_y^\nu E_\kappa(i\xi,y)}^2\,d\omega(\xi)
  \le
 C_{\mathcal A,\nu}\frac{\Omega_\kappa(y)}{\omega(B(y,1))}
 \le
 C_{\mathcal A,\nu},
\end{align*}
where the last inequality follows from the fact that
$ \omega(B(x,1))\approx\Omega_\kappa(x).
$ This is the standard volume estimate with radius $1$, see, for example,
\cite[Section~2]{DH2019}.  Unless otherwise stated, constants may depend on $R$, $\kappa$, and fixed
compact annuli.

Taking the supremum over $y$ in the above inequality proves \eqref{eq:raw-average}.
\end{proof}

For $u\in L^\infty(\mathcal A)$, Proposition~\ref{prop:raw-average} gives
\begin{align}\label{eq:raw-u}
 \Omega_\kappa(y)^{1/2}
 \norm{u(\xi)\partial_y^\nu E_\kappa(i\xi,y)}_{L^2(A,d\omega)}
 \le\,
 \norm{u}_{L^\infty(\mathcal A)}
 \left(
 \Omega_\kappa(y)
 \int_\mathcal A
 \abs{\partial_y^\nu E_\kappa(i\xi,y)}^2\,d\omega(\xi)
 \right)^{1/2}
 \le
 C_{\mathcal A,\nu}\norm{u}_{L^\infty(\mathcal A)}.
\end{align}
If $u=\mathcal D a$, where $\mathcal D$ is a operator composition of total order
$L$, then Sobolev embedding and the preceding composition estimate give
\begin{align*}
 \norm{u}_{L^\infty(\mathcal A)}
 &\le
 \norm{\mathcal D a}_\infty
 \le
 C_\varepsilon\norm{\mathcal D a}_{H^{N/2+\varepsilon}}
 \le
 C_{\mathcal D,\varepsilon}
 \norm{a}_{H^{L+N/2+\varepsilon}}.
\end{align*}
Thus this argument costs an additional $N/2+\varepsilon$ derivatives.

Already for $\nu=0$, the right side of \eqref{eq:packet-joint} cannot be replaced by
$\norm{u}_{L^2(d\omega)}$. Consider the reflection group on $\bbR^2$ generated by
the reflections across $x_1=0$ and $x_2=0$, with respective
multiplicities $\kappa_1,\kappa_2>0$. Fix $0<c<1/4$ sufficiently
small and, for $\ell\ge1$, set
$$
 y_\ell=(0,\ell),
 \qquad
 Q_\ell=\left\{\xi\in\bbR^2:
 \abs{\xi_1-1}<c,\quad \frac{c}{2\ell}<\xi_2<\frac{c}{\ell}\right\}.
$$
The sets $Q_\ell$ consist of regular points and lie in one fixed compact annulus.  Writing
$e_{\kappa_j}$ for the rank-one Dunkl kernels and using $e_{\kappa_j}(0)=1$, the product formula gives,
for $\xi\in Q_\ell$,
\begin{align*}
 E_\kappa(i\xi,y_\ell)
 &=
 e_{\kappa_1}(0)e_{\kappa_2}(i\ell\xi_2)
 =
 e_{\kappa_2}(i\ell\xi_2).
\end{align*}
By continuity at the origin,
$
 \abs{E_\kappa(i\xi,y_\ell)}
 \ge c_0,
$
whereas
$
 \Omega_\kappa(y_\ell)^{1/2}
 \approx \ell^{\kappa_2}
$
and, on $Q_\ell$,
$
 w_\kappa(\xi)^{1/2}
 \approx \ell^{-\kappa_2}.
$

Hence, for any $u_L\in C_c^\infty(Q_\ell)\setminus\{0\}$,
\begin{align*}
 \frac{
 \Omega_\kappa(y_\ell)^{1/2}
 \norm{u_\ell(\xi)E_\kappa(i\xi,y_\ell)}_{L^2(d\omega)}
 }{\norm{u_\ell}_{L^2(d\omega)}}
 &\ge
 c_0\Omega_\kappa(y_\ell)^{1/2}
 \gtrsim
 \ell^{\kappa_2}
 \longrightarrow\infty.
\end{align*}
Moreover, for $0\le\theta<1/2$,
$$
 \inf_{\xi\in Q_\ell}
 w_\kappa(\xi)^\theta
 \Omega_\kappa(y_\ell)^{1/2}
 \abs{E_\kappa(i\xi,y_\ell)}
 \gtrsim
 \ell^{(1-2\theta)\kappa_2}
 \longrightarrow\infty.
$$
Thus the power $1/2$ of $w_\kappa$ in Proposition~\ref{thm:joint} is sharp in this model.

\begin{corollary}
Let $m$ be measurable and suppose that $\calS_\sigma(m)<\infty$ for some
$
 \sigma>\frac{\mathbf N+N}{2}.
$
Then all conclusions of Theorem~\ref{thm:main} hold.
\end{corollary}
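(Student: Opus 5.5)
The plan is to rerun the proof of Theorem~\ref{thm:main}, using the crude spectral average of Proposition~\ref{prop:raw-average} wherever the joint estimate of Proposition~\ref{thm:joint} was used, and to absorb the resulting loss of $N/2+\varepsilon$ derivatives into the larger hypothesis on $\sigma$. Since $\sigma>(\mathbf N+N)/2$, we first fix $\varepsilon>0$ and set $s=\sigma-N/2-\varepsilon>\mathbf N/2$. Lemma~\ref{lem:dyadic} still gives $\norm{m}_\infty\le C\calS_\sigma(m)$, so the $L^2$ bound $M_2$ is unchanged.

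\textbf{Kernel estimates at integer order.} For an integer $M$ and an amplitude $a\in C_c^\infty$ supported in $\mathcal A_1$, we expand $d(x,y)^M\abs{\calK_g[a](x,y)}$ using Lemma~\ref{lem:packet-composition-expansion}, and do the same for the jet kernels via \eqref{eq:jet-composition-expansion}. Each term has an amplitude $u=\mathcal D a$ supported in $\mathcal A_2$ with $L(\mathcal D)\le M$. We then apply Plancherel. Instead of \eqref{eq:packet-joint}, we bound the resulting $L^2(d\omega)$ norm with \eqref{eq:raw-u}, after transferring it to the form $E_\kappa(-ihy,\xi)$ through symmetry and covariance exactly as in \eqref{eq:packet-size-bound}--\eqref{eq:packet-jet-bound}. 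This gives
\[
 \Omega_\kappa(y)^{1/2}\norm{\calK_h[u](\cdot,y)}_{L^2(d\omega)}\le C\norm{u}_{L^\infty},
\]
together with the analogous bound for the jet kernel, carrying the factor $\norm{v}$. Sobolev embedding and Proposition~\ref{prop:compositions} then give $\norm{\mathcal D a}_\infty\le C\norm{a}_{H^{M+N/2+\varepsilon}}$. Summing over the finitely many terms yields the weighted estimates \eqref{eq:unit-size} and \eqref{eq:unit-jet} with $\norm{a}_{H^s}$ replaced by $\norm{a}_{H^{s+N/2+\varepsilon}}$, for every integer $s=M$.

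\textbf{Interpolation, closure, and the endpoint argument.} We interpolate between integer orders as in the proof of Proposition~\ref{prop:unit}, using the cutoff $\chi$. The only change is that the Sobolev scale is shifted by $N/2+\varepsilon$, and complex interpolation still applies because $[H^{M+N/2+\varepsilon},H^{M+1+N/2+\varepsilon}]_\theta=H^{M+\theta+N/2+\varepsilon}$. This gives the fractional estimates at order $s$ with cost $\norm{a}_{H^\sigma}$. The mollification and closure argument of Lemma~\ref{lem:sobolev-kernel-closure} then goes through verbatim for $b\in H^\sigma$ supported in $\mathcal A_0$: $\sigma>N/2$ still guarantees continuity and $L^1(d\omega)$ membership, and the Lipschitz estimates in $y$ and $x$ follow from the jet bounds by the same fundamental-theorem-of-calculus and transpose arguments. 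Consequently, hypotheses \eqref{eq:closure-kernel-size} and \eqref{eq:closure-kernel-diff} of Proposition~\ref{prop:finite-frequency-closure} hold with exponent $s>\mathbf N/2$ and $A=C\calS_\sigma(m)$. From there the remainder of the proof of Theorem~\ref{thm:main} applies unchanged: the chamber lifting, the finite dyadic sums, truncation, weak type, interpolation, duality, and the $H^1$/BMO endpoints.

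\textbf{Main obstacle.} The one point needing care is that the weighted decay exponent $s$ in Proposition~\ref{prop:finite-frequency-closure} must exceed $\mathbf N/2$, while the derivatives actually spent are $s+N/2+\varepsilon$. The hypothesis $\sigma>(\mathbf N+N)/2$ is exactly what makes both requirements compatible once $\varepsilon$ is chosen small.
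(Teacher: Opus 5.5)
Your proposal is correct and is essentially the paper's own proof. You replace \eqref{eq:packet-joint} by \eqref{eq:raw-u}, and pay the extra $N/2+\varepsilon$ derivatives through the Sobolev embedding of $\mathcal D a$ and Proposition~\ref{prop:compositions}. You then interpolate between integer orders with the cutoff $\chi$, rerun the closure argument of Lemma~\ref{lem:sobolev-kernel-closure}, and feed $A\le C\calS_\sigma(m)$ into Proposition~\ref{prop:finite-frequency-closure}. The only difference is cosmetic: you spend exactly $\sigma$ derivatives by taking the decay exponent $s=\sigma-N/2-\varepsilon$. The paper instead picks $s_0$ and $\eta$ so that $t_0=s_0+N/2+\eta<\sigma$, and then uses $\norm{b_j}_{H^{t_0}}\le\norm{b_j}_{H^\sigma}$.
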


\begin{proof}
Set
$
 \mathscr D_0=\{I\}
 $
 and 
 $
 \mathscr D_{m+1}
 =
 \mathscr D_m\cup
 \bigl\{\partial_j\mathcal D,\delta_\alpha\mathcal D:
 \mathcal D\in\mathscr D_m,\ 1\le j\le N,\ \alpha\in R_+\bigr\}.
$

Choose $s_0$ and $\eta$ so that
$$
 \frac{\mathbf N}{2}<s_0<\sigma-\frac N2,
 \qquad
 0<\eta<\sigma-s_0-\frac N2,
 \qquad
 t_0=s_0+\frac N2+\eta<\sigma,
 \qquad
 M=\lfloor s_0\rfloor,
 \qquad
 \theta=s_0-M.
$$
For $q\in\{M,M+1\}$, if $\mathcal D\in\mathscr D_q$ has order $L(\mathcal D)\le q$, then
Sobolev embedding and Proposition~\ref{prop:compositions} give
\begin{align*}
 \norm{\mathcal D a}_\infty
 &\le
 C_\eta\norm{\mathcal D a}_{H^{N/2+\eta}}
 \le
 C_{q,\eta}\norm{a}_{H^{L(\mathcal D)+N/2+\eta}}
 \le
 C_{q,\eta}\norm{a}_{H^{q+N/2+\eta}}.
\end{align*}
For $u$ supported in $\mathcal A_2$, Plancherel and \eqref{eq:raw-u} give
\begin{align*}
 \Omega_\kappa(y)^{1/2}
 \norm{\calK_h[u](\cdot,y)}_2
 &=
 (c_\kappa^{\mathrm M})^{-1}
 \Omega_\kappa(y)^{1/2}
 \norm{u(\xi)E_\kappa(-ihy,\xi)}_{L^2(d\omega)}
 \le
 C\norm{u}_\infty,
 \end{align*}
 and
 \begin{align*}
 \Omega_\kappa(y)^{1/2}
 \norm{\calK_{h,v}^{(1)}[u](\cdot,y)}_2
 &\le
 C\norm{v}\norm{u}_\infty.
\end{align*}
For smooth $a$ supported in $\mathcal A_1$, \eqref{eq:packet-composition-expansion} and
\eqref{eq:jet-composition-expansion} therefore give, with the direct estimate used when $q=0$,
\begin{align*}
 \Omega_\kappa(y)^{1/2}
 \norm{(1+d(\cdot,y))^q\calK_g[a](\cdot,y)}_{L^2(d\omega)}
 &\le
 C_q
 \sum_{\substack{h\in G\\\mathcal D\in\mathscr D_q}}
 \Omega_\kappa(y)^{1/2}
 \norm{\calK_h[\mathcal D a](\cdot,y)}_2\\
 &\le
 C_q\sum_{\mathcal D\in\mathscr D_q}\norm{\mathcal D a}_\infty
 \le
 C_{q,\eta}\norm{a}_{H^{q+N/2+\eta}}.
\end{align*}
Similarly,
\begin{align*}
 &\Omega_\kappa(y)^{1/2}
 \norm{(1+d(\cdot,y))^q\calK_{g,v}^{(1)}[a](\cdot,y)}_{L^2(d\omega)}\\
 \le\,&
 C_q
 \sum_{\substack{h\in G\\\mathcal D\in\mathscr D_q}}
 \Omega_\kappa(y)^{1/2}
 \norm{\calK_{h,v}^{(1)}[\mathcal D a](\cdot,y)}_2
 +
 C_q\norm{v}
 \sum_{\substack{h\in G\\\mathcal D\in\mathscr D_q}}
 \Omega_\kappa(y)^{1/2}
 \norm{\calK_h[\mathcal D a](\cdot,y)}_2\\
 \le\,&
 C_q\norm{v}
 \sum_{\mathcal D\in\mathscr D_q}\norm{\mathcal D a}_\infty
 \le
 C_{q,\eta}\norm{v}\norm{a}_{H^{q+N/2+\eta}}.
\end{align*}
Here we apply \eqref{eq:raw-u} to each kernel $\calK_h[\mathcal D a]$ with $\abs{\nu}=0$ and each kernel $\calK_{h, v}^{(1)}[\mathcal D a]$ with $\abs{\nu}=1$.

The same estimates hold for amplitudes supported in $\mathcal A_2$, with a different fixed constant.
Choose $\chi\in C_c^\infty(\operatorname{int}\mathcal A_2)$ equal to one near $\mathcal A_1$.  For $\theta\in(0,1)$,
interpolate the operators
$$
 a\longmapsto
 \Omega_\kappa(y)^{1/2}\calK_g[\chi a](\cdot,y),
 \qquad
 a\longmapsto
 \frac{\Omega_\kappa(y)^{1/2}}{\norm{v}}
 \calK_{g,v}^{(1)}[\chi a](\cdot,y)
$$
between the two integer bounds.  The interpolation identities are
\begin{align*}
 [H^{M+N/2+\eta},H^{M+1+N/2+\eta}]_\theta
 &=H^{t_0}.
\end{align*}
and
\begin{align*}
 [L^2((1+d(\cdot,y))^{2M}d\omega),
   L^2((1+d(\cdot,y))^{2M+2}d\omega)]_\theta
 &=L^2((1+d(\cdot,y))^{2s_0}d\omega),
\end{align*}
respectively.  Hence
\begin{align*}
 \Omega_\kappa(y)^{1/2}
 \norm{(1+d(\cdot,y))^{s_0}\calK_g[a](\cdot,y)}_2
 &\le
 C\norm{a}_{H^{t_0}}.
\end{align*}
Similarly,
\begin{align*}
 \Omega_\kappa(y)^{1/2}
 \norm{(1+d(\cdot,y))^{s_0}\calK_{g,v}^{(1)}[a](\cdot,y)}_2
 &\le
 C\norm{v}\norm{a}_{H^{t_0}}.
\end{align*}
The case $\theta=0$ follows from the $q=M$ estimate, and the case $v=0$ is immediate.  We follow the proof of Lemma~\ref{lem:sobolev-kernel-closure},
using the two estimates above and \eqref{eq:packet-transpose}. Replacing $(s,H^s)$ there by $(s_0,H^{t_0})$ gives the corresponding size and
first-difference bounds in both kernel variables for $H^{t_0}$ functinos supported in $\mathcal A_0$.

Retain the notation $r_j$, $b_j$, $b_{j,\varepsilon}$, $K_{j,\varepsilon}$, $\Psi_J$, and $m^{(J)}$ from
the proof of Theorem~\ref{thm:main}.  On $\mathcal C$, \eqref{eq:chamber-distance} and the dilation identity
\eqref{eq:lifted-scaling} give, uniformly
in $j$, $\varepsilon\in\{0,1\}$, $\rho$, and $\tau$,
\begin{align*}
 V_\mathcal C(y,r_j)^{1/2}
 \norm{
 \left(1+\frac{\norm{\cdot-y}}{r_j}\right)^{s_0}
 K_{j,\varepsilon}^{\rho\tau}(\cdot,y)
 }_2
 &\le
 C\norm{b_j}_{H^{t_0}}.
\end{align*}
If $\norm{y-y'}\le r_j$, then
\begin{align*}
 V_\mathcal C(y,r_j)^{1/2}
 \norm{
 \left(1+\frac{\norm{\cdot-y}}{r_j}\right)^{s_0}
 \bigl(K_{j,\varepsilon}^{\rho\tau}(\cdot,y)
       -K_{j,\varepsilon}^{\rho\tau}(\cdot,y')\bigr)
 }_2
 &\le
 C\frac{\norm{y-y'}}{r_j}\norm{b_j}_{H^{t_0}}.
\end{align*}
Since $s_0>\mathbf N/2$ and $t_0<\sigma$,
\begin{align*}
 A
 \le
 C\sup_{j\in\bbZ}\norm{b_j}_{H^{t_0}}
 \le
 C\calS_\sigma(m).
\end{align*}
Moreover,
\begin{align*}
 M_2
 \le
 \norm{m}_{L^\infty}
 \le
 C\calS_\sigma(m).
\end{align*}
Proposition~\ref{prop:finite-frequency-closure} therefore gives \eqref{eq:finite-endpoints} uniformly
for the finite dyadic sums.  Moreover,
$$
 0\le \Psi_J\le1,
 \qquad
 \Psi_J\longrightarrow1,
 \qquad
 \abs{\Psi_Jm-m}\le2\norm{m}_\infty,
$$
so \eqref{eq:truncation-L2} remains valid.  The rest of the proof of Theorem~\ref{thm:main} gives the
weak, strong, Hardy, and BMO bounds.
\end{proof}

\begin{remark}
Assign $m(0)=0$ and let $\widetilde\psi$ be the cutoff in \cite[Theorem~1.3]{Bui2025}.  Finite dyadic
overlap, bounded dilations by factors comparable to $1$, and
$H^\sigma\hookrightarrow W_\infty^{s_0}$ give
$$
 \sup_{t>0}
 \norm{\widetilde\psi(\cdot)m(t\cdot)}_{W_\infty^{s_0}}
 \le
 C\sup_{t>0}
 \norm{\widetilde\psi(\cdot)m(t\cdot)}_{H^\sigma}
 \le
 C\calS_\sigma(m).
$$
Thus Bui's theorem also gives the strong $L^p$ bounds.  The proof above additionally gives weak type
$(1,1)$ and the chamber-pullback endpoints.  Its extra $N/2$ loss is precisely the Euclidean embedding
$$
 \norm{\mathcal D a}_\infty
 \le
 C_\eta\norm{\mathcal D a}_{H^{N/2+\eta}}.
$$
No wall separation is used.
\end{remark}

\medskip
\medskip
\medskip
\noindent\textbf{Acknowledgements:}\par
Li is supported by the Australian Research Council through grant DP260100485.  Wu is supported by the
National Natural Science Foundation of China
(grant no.~12201002).

\end{document}